\newtheorem{theorem}{Theorem}[section]
\newtheorem{lemma}[theorem]{Lemma}
\newtheorem{proposition}[theorem]{Proposition}
\newtheorem{corollary}[theorem]{Corollary}
\newcommand{\usc}{\operatorname{usc}}
\theoremstyle{definition} \newtheorem{definition}[theorem]{Definition}
\newtheorem{example}[theorem]{Example}
\newtheorem{remark}[theorem]{Remark}
\begin{document}

%\pubyr{1000} %%% Publishing Year
%%\vol{1} %%%% Volume number
%\iss{1} %%%% Issue number
%\startpage{1} %%% Starting pagenumber
%\endpage{1000} %%% End pagenumber

\title{Analytic test configurations and geodesic rays} 
\author{Julius Ross and David Witt Nystr\"om}
\date{5 June 2013}
%\thanks{\noindent {\sc Julius Ross,  University of Cambridge, UK. \\j.ross@dpmms.cam.ac.uk}\vspace{2mm}\\ 
%\noindent{\sc David Witt Nystr\"om,  University of Gothenburg, Sweden. \\\quad wittnyst@chalmers.se}
%
%}

\maketitle

\begin{abstract}
  Starting with the data of a curve of singularity types, we use the
  Legendre transform to construct weak geodesic rays in the space of
  locally bounded metrics on an ample line bundle $L$ over a compact
  manifold.  Using this we associate weak geodesics to suitable
  filtrations of the algebra of sections of $L$.  In particular this
  works for the natural filtration coming from an algebraic test
  configuration, and we show how this recovers the weak
  geodesic ray of Phong-Sturm.
\end{abstract}

\newcommand{\Conv}{\operatorname{Conv}}

% \tableofcontents

\section{Introduction}
Let $\mathcal H(L)$ be the space of smooth strictly positive hermitian
metrics on an ample holomorphic line bundle $L$ over a compact manifold $X$.
Then, by the work of Mabuchi \cite{Mabuchi}, Semmes \cite{Semmes} and Donaldson \cite{Donaldson3}, formally $\mathcal H(L)$ has the structure of an
infinite dimensional symmetric space with a canonical Riemannian
metric.  Thus a natural way to study this space is through its
geodesics, an approach that has been taken up by a number of authors
(e.g.\ Berndtsson \cite{Berndtsson}, Chen-Tian \cite{Chen2}, Donaldson, Phong-Sturm \cite{Sturm4,Sturm}, Mabuchi, and Semmes among others).

In this paper we give a general method for constructing weak geodesic rays
in the space of locally bounded positive metrics on $L$.  In the following we shall, in the standard way, identify a (positive) metric $h$ on $L$ with its (plurisubharmonic) potential $\phi=\log h$.   Our initial data consists of a fixed smooth plurisubharmonic potential $\phi$ and a curve of singular plurisubharmonic potentials $\psi_{\lambda}$ on $L$ for $\lambda\in \mathbb R$.   We are really  only interested in the singularity
type of $\psi_{\lambda}$, so we consider the equivalence class of
$\psi_{\lambda}$ under the relation $\psi_{\lambda}\sim
\psi'_{\lambda}$ if $\psi_{\lambda}-\psi'_{\lambda}$ is bounded
globally on $X$.  We define the \emph{maximal envelope} of this data
to be
\begin{equation}
  \label{eq:envelope}
 \phi_\lambda := {\sup}^* \{ \psi : \psi \le \phi \text{ and } \psi\sim \psi_{\lambda}\}  
\end{equation}
where the supremum is over positive metrics $\psi$ with the same
singularity type as $\psi_{\lambda}$, and the star denotes the
operation of taking the upper-semicontinuous regularization.

This equivalence relation on potentials was considered by Demailly-Peternell-Schn\-eider \cite{Demailly3} and is relevant to metrics with minimal singularities.   The envelope in \eqref{eq:envelope} was studied in a special case by Berman \cite{Berman4}, and is the global analogue of a construction of Rashkovskii-Sigurdsson \cite{Rashkovskii}.  Observe that if $\psi_{\lambda}$ is itself locally bounded then $\phi$ is a candidate for this envelope implying $\phi_{\lambda} = \phi$, and thus we are most interested in the case that $\psi_{\lambda}$ is singular along some non-trivial subset of $X$.   When $\psi_{\lambda}$ has analytic singularities, it can be shown that $\phi_{\lambda}$ is the largest plurisubharmonic potential bounded above by $\phi$ with the same singularity type as $\psi_{\lambda}$ (see Remark \ref{rmk:singularitytype}).   \medskip

We shall call a curve $\psi_{\lambda}$ of plurisubharmonic potentials a \emph{test curve} if it is concave in $\lambda$, locally bounded for $\lambda$ sufficiently negative and identically $-\infty$ for $\lambda$ sufficiently large (we also make one further technical condition concerning the kind of singularity allowed, see Definition \ref{definitiontestcurve}).  A justification of this terminology will be given below, and as a simple example to have in mind, suppose that $s$ is a holomorphic section of $L$ scaled so $|s|^2_{\phi}\le 1$  and let
\begin{equation}\label{eq:exampletestcurve}
\psi_{\lambda} = \left\{
  \begin{array}{ll}
    \phi & \lambda< 0 \\
(1-\lambda)\phi + \lambda \ln |s|^2 & 0\le \lambda<1\\
-\infty & \lambda\ge 1,
  \end{array}
\right.  
\end{equation}
so $\psi_{\lambda}$ is decreasing and concave in $\lambda$.  Geometrically then $\phi_{\lambda} = \phi$ for $\lambda\le 0$ and for $\lambda\in (0,1)$ it is the largest plurisubharmonic potential bounded above by $\phi$ with Lelong number at least $\lambda$ along the divisor $Y= \{ s=0\}$.    

\begin{theorem}\label{theoremmain1}
  Let $\psi_{\lambda}$ be a test curve and  $\phi$ be a smooth positive potential,  and consider the Legendre transform of its maximal envelope $\phi_{\lambda}$ given by
$$ \widehat{\phi}_t := {\sup}_{\lambda}^* \{ \phi_\lambda + \lambda t\} \quad \text{ for }t\in [0,\infty).$$
Then $\widehat{\phi}_t$ is a weak geodesic ray in the space of locally
bounded plurisubharmonic potentials on $L$ that emanates from $\phi$.
\end{theorem}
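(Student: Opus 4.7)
The plan is to verify the defining properties of a weak geodesic ray for the $S^1$-invariant function $\Phi(x,w) := \widehat{\phi}_{-\log|w|}(x)$ on $X\times \Delta^*$, where $\Delta^* = \{w\in\mathbb{C} : 0<|w|\le 1\}$: (a) $\Phi$ is a locally bounded positive metric on $\pi^*L$; (b) $\Phi$ is plurisubharmonic; (c) $(dd^c\Phi)^{n+1} = 0$ on $X\times\Delta^*$; (d) $\Phi$ extends to $|w|=1$ with boundary value $\phi$. Once these are in hand the ray structure on all of $t\in[0,\infty)$ follows since the Legendre formula defines $\widehat{\phi}_t$ uniformly in $t$.

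Plurisubharmonicity is immediate from the Legendre formula: for each fixed $\lambda$ the function $(x,w)\mapsto \phi_\lambda(x) + \lambda(-\log|w|)$ is psh on $X\times \Delta^*$, because $\phi_\lambda$ is a positive metric on $L$ (being the usc regularization of a sup of such) and $-\log|w|$ is pluriharmonic on $\Delta^*$. Taking the sup over $\lambda$ and applying the usc regularization preserves plurisubharmonicity by Choquet's lemma and the standard fact that $\sup^*$ of a locally uniformly bounded family of psh functions is psh.

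For local boundedness and the initial condition, I would exploit that $\phi_\lambda \le \phi$ for every $\lambda$ by definition of the maximal envelope, so $\widehat{\phi}_t \le \phi + Ct$ on compact $t$-intervals for a suitable $C$. Concavity of $\lambda \mapsto \phi_\lambda$ (inherited from the test curve) forces $\phi_\lambda + \lambda t$ to tend to $-\infty$ uniformly in $x$ as $\lambda$ approaches the endpoint of its range where $\psi_\lambda$ becomes most singular; this confines the effective supremum to a compact range of $\lambda$'s depending on $t$ and yields uniform two-sided bounds. Continuity of the Legendre transform at the boundary then gives $\widehat{\phi}_0 = \phi$.

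The main obstacle is the homogeneous Monge-Amp\`ere equation $(dd^c\Phi)^{n+1} = 0$. My strategy is to characterize $\Phi$ as a maximal envelope of $S^1$-invariant psh subgeodesics: namely, as the upper envelope of all psh metrics $\Psi$ on $\pi^*L$ over $X\times \Delta^*$ whose radial boundary values at $|w|=1$ are bounded by $\phi$ and whose slices retain the singularity type prescribed by the test curve with the correct slopes. Any such subgeodesic can itself be recovered from its Legendre components, and the concavity of $\lambda\mapsto \phi_\lambda$ together with Legendre duality ensures that the sup defining $\widehat{\phi}_t$ is effectively attained and that $\Phi$ realizes this envelope. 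Once $\Phi$ is identified as a maximal psh metric below an upper barrier, vanishing of $(dd^c\Phi)^{n+1}$ follows from the standard Bedford-Taylor maximality argument: any local upward perturbation preserving plurisubharmonicity and the upper bound would contradict maximality. As a back-up route I would approximate the concave curve $\lambda\mapsto \phi_\lambda$ by piecewise linear concave curves with finitely many breakpoints, where $\Phi$ takes an explicit piecewise form whose Monge-Amp\`ere vanishes by direct computation (this is the setting closest to the Phong-Sturm rays), and then pass to the limit using monotone convergence of the complex Monge-Amp\`ere operator.
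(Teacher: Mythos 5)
Your first two steps (plurisubharmonicity of $\Phi$ and local boundedness with $\widehat{\phi}_0=\phi$) track the paper's Lemma \ref{lemma8}, and are essentially correct, though the paper also needs the observation that $\widehat{\phi}_t$ is Lipschitz in $t$ to identify the usc regularization on $X$ with the usc regularization on $X\times A$. The main step, however, has a real gap. You propose to identify $\Phi$ as an upper envelope over $X\times\Delta^*$ subject simultaneously to a boundary-value bound at $|w|=1$ and a singularity-type constraint on each slice, and then to deduce $(dd^c\Phi)^{n+1}=0$ from ``the standard Bedford-Taylor maximality argument.'' The Bedford-Taylor result (that $MA(P\phi)$ is carried by the contact set $\{P\phi=\phi\}$) concerns envelopes below a continuous obstacle; it does not apply off the shelf to envelopes carrying a singularity-type constraint, which is a global non-obstacle condition. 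Indeed, the paper devotes Section \ref{sectionenvelopes}, culminating in Theorem \ref{lemma3}, to extending Bedford-Taylor to exactly this singularity-constrained situation, and the proof requires a careful double approximation (Lemmas \ref{lemlim} and \ref{xcv}); this is not ``standard.'' More to the point, even once one has Theorem \ref{lemma3} — i.e.\ that each $\phi_\lambda$ is maximal with respect to $\phi$ \emph{on $X$} — there is a genuinely missing step in passing from that slicewise information to vanishing of $MA(\Phi)$ on the open set $X\times\Delta^*$. The paper bridges this precisely by switching to the Aubin-Mabuchi energy: Proposition \ref{key} turns the maximality of a slice into two-sided bounds on energy increments, a discretization of the Legendre transform (the approximants $\widehat{\psi}_t^N$) plus the cocycle property yields that $\mathcal{E}(\widehat{\psi}_t,\widehat{\psi}_0)$ is affine in $t$ (Proposition \ref{lemma9}), and Lemma \ref{lemmageo2} converts affinity of the energy into the HCMA equation for the subgeodesic $\Phi$. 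None of this bridge appears in your sketch.

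Your back-up route — piecewise-linear concave approximation and passage to the limit — is in spirit close to the paper's discretization. But the claim that one can ``pass to the limit using monotone convergence of the complex Monge-Amp\`ere operator'' is exactly the kind of statement the paper is designed to avoid: $MA$ of locally bounded metrics does behave well under monotone limits, but the identification of where the resulting measure vanishes does not pass through limits in an obvious way, especially since the contact sets of the approximants vary. The paper gets around this by computing the \emph{energy} (a scalar) of the approximants, where monotone convergence (Bedford-Taylor, Theorem \ref{thmconv}) does directly give convergence, and only at the end invokes the energy--geodesic equivalence. If you want to complete your proposal along your intended lines, the missing ingredients are: (i) a proof (not just an assertion) that $\Phi$ is the claimed envelope; (ii) a version of the domination/maximality principle valid under singularity-type constraints; and (iii) a mechanism to pass from slicewise maximality on $X$ to vanishing of $MA(\Phi)$ on $X\times\Delta^*$. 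The paper supplies all three via a different route, and it is worth understanding why the energy reformulation is the lever that makes the problem tractable.
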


Looking back at \eqref{eq:exampletestcurve} it is clear that specific examples of test curves are easy to produce, and thus this theorem gives a new construction of many weak geodesic rays emanating from $\phi$.  In fact, as we shall observe in Remark \ref{rmk:Berndtsson}, essentially every weak geodesic ray can be produced in this way.\medskip

We recall precisely what is meant by a weak geodesic.    Let $I\subset \mathbb R$ be an interval and consider the annulus $A:=\{w\in \mathbb C: -\ln |w|\in I\}$ and $\pi\colon X\times A\to X$ be the projection.    Then a curve of plurisubharmonic potentials $\phi_t$ for $t\in I$ can be identified with a rotation invariant potential $\Phi(x,w):=\phi_{-\log |w|}$ on $\pi^*L$.  A simple calculation \cite[2.3,2.4]{Semmes} reveals that if  $\phi_t$ is a smooth curve of smooth potentials then the geodesic equation for $\phi_t$ is equivalent
to the homogeneous complex Monge-Amp\`ere equation
\begin{equation}
  MA(\Phi) : = (dd^c \Phi)^{n+1} =0 \quad \text{ on } X\times A^{\circ}\label{geodesicMA}.
\end{equation}
A curve $(\phi_t)_{t\in I}$ of locally bounded (not necessarily smooth) plurisubharmonic potentials given by $\Phi = \Phi(x,w)$ is said to be a \emph{weak geodesic} if $dd^c\Phi$ is positive and solves \eqref{geodesicMA} in the sense of currents.   When $I=[0,\infty)$ (so $A$ is the punctured unit disc) we say that $\phi_t$ is a \emph{weak geodesic ray}.  \medskip

The first step in our approach to Theorem \ref{theoremmain1} is
showing that the Monge-Amp\`ere measure of the maximal envelope
$\phi_{\lambda}$ satisfies
\begin{equation} \label{maxcd}
MA(\phi_{\lambda}) = \mathbf{1}_{\{\phi_{\lambda}=\phi\}} MA(\phi_{\lambda})\quad \text{ for all } \lambda,
\end{equation}
where ${\mathbf 1}_S$ denotes the characteristic function of a set $S$. We say that a positive metric $\phi_{\lambda}$ bounded by $\phi$ and having property (\ref{maxcd}) is \emph{maximal} with respect to $\phi$ (see Definition \ref{defmax2}), and a test curve $\phi_{\lambda}$ where $\phi_{\lambda}$ is maximal with respect to $\phi$ for all $\lambda$ is referred to as a \emph{maximal test curve}.   We shall show that the Legendre transform $\widehat{\phi}_t$ of a maximal test curve $\phi_{\lambda}$ is a subgeodesic, and also that Aubin-Mabuchi energy is linear in $t$, which together imply that it is a weak geodesic   (in the smooth case this is well known and goes back to Mabuchi \cite{Mabuchi2} and more generally can be deduced from a Theorem of Berman-Boucksom-Guedj-Zeriahi, see Lemma \ref{lemmageo2}).\medskip

The well known Yau-Tian-Donaldson conjecture states that for a
smooth projective manifold it should be possible to detect the
existence of a constant scalar curvature K\"ahler metric algebraically.  Through ideas
developed by many authors (e.g.\ Chen, Donaldson, Mabuchi, Tian) a
general picture has emerged in which such metrics appear as critical
points of certain energy functionals that are convex along smooth
geodesics.  The input from algebraic geometry arises through
notion of an (algebraic) test configuration, originally due to Tian \cite{Tian97} and then extended by Donaldson \cite{Donaldson2}, which, roughly speaking, is
a one-parameter algebraic degeneration of the original projective
manifold $X$.

In a series of papers, Phong-Sturm show how one can naturally associate
a weak geodesic ray to a test configuration \cite{Sturm4, Sturm,
  Sturm2} (see also \cite{Arezzo, Chen00, Chen08, Sun, Zelditch, Tian} for other constructions of geodesic rays related to test configurations).    We show how the geodesic of Phong-Sturm can be viewed as a particular case of the Legendre transform construction.  For example,  applying the Legendre transform construction to the example of the test curve \eqref{eq:exampletestcurve} given above recovers the geodesic of Phong-Sturm that is associated to the test configuration given by the degeneration to the normal cone of the divisor $Y$.\medskip

Generalizing slightly, suppose that $\mathcal F_{k,\lambda}$, for
$k\in \mathbb N, \lambda\in \mathbb R$ is a multiplicative filtration
of the graded algebra $\oplus_k H^0(X,kL)$.  Using the underlying smooth positive metric $\phi$ and some auxiliary volume form we have an $L^2$-inner product on each $H^0(X,kL)$, and thus can consider the associated Bergman metric
$$\phi_{k,\lambda} = \frac{1}{k} \ln \sum_{\alpha} |s_{\alpha}|^2$$
where $\{s_{\alpha}\}$ is an orthonormal basis for $\mathcal
F_{k,\lambda k}\subset H^0(X,kL)$.  So, for example, if $Y$ is a divisor on $X$ we can let $\mathcal F_{k,\lambda} = \{ s\in H^0(kL) : \text{ord}_Y(s)\ge \lambda\}$ be the multiplicative filtration given by order of vanishing along $Y$, and then $\phi_{k,\lambda}$ is the potential associated to the partial Bergman kernel coming from sections of $kL$ that vanish to at least order $k\lambda$ along $Y$.

\begin{theorem}\label{theoremmain2}
Suppose that $\mathcal F_{k,\lambda}$ is left continuous and decreasing in $\lambda$ and bounded (see \eqref{def}).  Then there is a well-defined limit
$$\phi_{\lambda}^{\mathcal F} = {\lim}^*_{k\to \infty}  \phi_{k,\lambda}$$
where the star denotes taking the upper semicontinuous regularization after the limit.   Furthermore this limit is maximal except possibly for one critical value of $\lambda$, and its Legendre transform is a weak geodesic ray.
\end{theorem}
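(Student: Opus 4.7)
The plan is to reduce Theorem~\ref{theoremmain2} to Theorem~\ref{theoremmain1} by identifying $\phi_\lambda^{\mathcal F}$ with the maximal envelope of a test curve built directly from the filtration. To establish the limit, note first that boundedness of $\mathcal F_{k,\lambda}$ forces $\phi_{k,\lambda}$ to be the full Bergman approximation of $\phi$ (converging by Tian--Zelditch) for $\lambda$ below some threshold, and identically $-\infty$ above an upper threshold. For intermediate $\lambda$, the multiplicativity $\mathcal F_{k_0,\mu_0}\cdot\mathcal F_{k_1,\mu_1}\subset \mathcal F_{k_0+k_1,\mu_0+\mu_1}$ yields a Fekete-type subadditivity for the sequence $-k\phi_{k,\lambda}$, which combined with uniform upper bounds from standard $L^2$--$L^\infty$ estimates on sections gives existence of the regularized limit $\phi_\lambda^{\mathcal F}$. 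The same multiplicativity, applied to rational convex combinations $\lambda = s\lambda_0+(1-s)\lambda_1$ with compatible $k_0,k_1$, produces concavity in $\lambda$ up to error $O(1/k)$ vanishing in the limit; monotonicity is inherited from the filtration and left continuity passes to the limit, so $\phi_\lambda^{\mathcal F}$ is a test curve.

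The crux of the argument is the maximality claim. I would introduce the singularity-type data
$$ \psi_\lambda := \Bigl(\limsup_k \tfrac{1}{k}\log\sup\bigl\{|s|: s\in \mathcal F_{k,\lambda k},\ \|s\|_{L^2}\leq 1\bigr\}\Bigr)^* $$
and show that $\phi_\lambda^{\mathcal F}$ coincides with the maximal envelope $\sup\{\psi\leq \phi:\psi\sim\psi_\lambda\}^*$. The inequality $\phi_\lambda^{\mathcal F}\leq$~(envelope) follows from sub-mean-value and $L^2$--$L^\infty$ estimates. For the reverse inequality I would use Ohsawa--Takegoshi extension or Demailly regularization to produce, from any competitor $\psi\leq \phi$ with singularity type $\psi_\lambda$, sections of $kL$ lying in $\mathcal F_{k,\lambda k}$ with the correct pointwise lower bounds, thereby realizing $\psi$ as a limit of finite-$k$ Bergman metrics. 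Once this envelope description is established, property~\eqref{maxcd} holds for each $\lambda$ outside a single critical value $\lambda_c$ at which the singularity type transitions (for instance, where $\psi_\lambda$ ceases to be $\phi$-bounded or becomes $-\infty$), and Theorem~\ref{theoremmain1} applied to this maximal test curve then yields the weak geodesic ray: a single exceptional value of $\lambda$ has no effect on the linearity of the Aubin--Mabuchi energy along the Legendre transform.

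The main obstacle, I expect, is the lower bound in the envelope identification of $\phi_\lambda^{\mathcal F}$. Bergman metrics are built from $L^2$-orthonormal bases with respect to the inner product induced by $\phi$, whereas the competing envelopes involve the pointwise condition $\psi\leq\phi$; translating between these requires a careful choice of extension datum adapted to the filtered subspaces, and it is precisely here that the left-continuity and boundedness hypotheses on $\mathcal F_{k,\lambda}$ will be fully exploited. A secondary subtlety is to pinpoint the exceptional value $\lambda_c$ and to verify that the test curve's failure of maximality there does not obstruct the application of Theorem~\ref{theoremmain1}.
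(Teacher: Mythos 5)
Your overall architecture is sound and mirrors the paper's strategy closely: establish existence of the limit via supermultiplicativity/Fekete together with Bernstein--Markov to compare $L^2$ and sup-norm Bergman kernels; derive concavity from multiplicativity on rational convex combinations; pin down the endpoints via boundedness; and reduce to Theorem~\ref{theoremmain1} by showing the limit is (essentially) a maximal envelope, with the single exceptional $\lambda_c$ handled by comparing Legendre transforms. All of this matches Lemmas~\ref{lemmafilt1}, \ref{convrew}, the subsequent proposition that $\phi_\lambda^{\mathcal F}$ is a test curve, and Corollary~\ref{lemma203}.

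The genuine gap is in the step you yourself flag as ``the main obstacle.'' You propose to bridge the $L^2$/pointwise mismatch using ``Ohsawa--Takegoshi extension or Demailly regularization'' to produce sections with the right pointwise bounds \emph{lying in $\mathcal F_{k,\lambda k}$}. Neither tool delivers the filtration membership by itself. Demailly regularization (Theorem~\ref{demailly}) gives you sections $s\in H^0(kmL\otimes\mathcal J(km\psi))$ approximating a competitor $\psi\leq\min\{\phi,\phi_{k,\lambda}+C\}$, but there is no a priori reason these sections should lie in the filtered subspace. Ohsawa--Takegoshi gives $L^2$-extension from subvarieties and similarly produces no algebraic constraint on which filtered piece the extension lands in. What the paper does instead is a Skoda-type \emph{division} theorem (Theorem~\ref{skoda}): because $\psi$ is more singular than $\phi_{k,\lambda}=\frac1k\ln\sum|s_{i,\lambda}|^2$, the Demailly sections satisfy $\int_X |s|^2/(\sum|s_{i,\lambda}|^2)^m\,dV<\infty$, and Skoda division writes $s=\sum h_\alpha s^\alpha$ with $s^\alpha$ monomials of degree $m-n-1$ in the generators $s_{i,\lambda}$. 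Multiplicativity of $\mathcal F$ then forces $s^\alpha\in\mathcal F_{k(m-n-1)\lambda}H^0(k(m-n-1)L)$, and boundedness controls where $h_\alpha$ lands, so that $s\in\mathcal F_{km\lambda'}H^0(kmL)$ with $\lambda'\to\lambda$ as $m\to\infty$. This division step is precisely what converts the analytic integrability condition into membership in the filtration, and it is the ingredient your proposal lacks.

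A secondary point: there is a latent circularity risk in your plan of directly identifying $\phi_\lambda^{\mathcal F}$ with $\sup\{\psi\leq\phi:\psi\sim\psi_\lambda\}^*$, since the resulting statement is essentially Proposition~\ref{vasd}, which the paper deduces \emph{after} maximality is established (via the domination principle, which itself needs $MA(\rho)$ to dominate a volume form, using concavity and $\lambda<\lambda_c$). The paper sidesteps this by instead proving $\phi_\lambda^{\mathcal F}=\lim_k\phi_{[\phi_{k,\lambda}]}$ (Proposition~\ref{lemma202}), i.e.\ realizing $\phi_\lambda^{\mathcal F}$ as a limit of envelopes of the finite-level Bergman metrics, and then invoking Lemma~\ref{lemlim} to conclude that a limit of maximal metrics is maximal. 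If you want your route to work, you should target that same formulation rather than the envelope of a $\limsup$ singularity type, and you need the Skoda division theorem to close the argument.
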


In particular this applies to a natural filtration associated to a test configuration, and thus we have associated a weak geodesic to any such test configuration.  We prove that, in this case, we recover precisely Phong-Sturm geodesic, thus reproving the main result of \cite{Sturm}.  Hence one interpretation of Theorem \ref{theoremmain1} is that in the problem of finding weak geodesic rays,  the algebraic data of a test configuration can be replaced with a suitable concave curve of singularity types which we thus refer to as an \emph{analytic test configuration}. \medskip

The relationship between a algebraic and analytic test configurations is analogous to the relationship between holomorphic and plurisubharmonic functions. Being given by a concave curve of singularity types, analytic test configurations are easier to produce and manipulate.    For example it is possible to interpolate between two analytic test configurations merely by taking the line between them, whereas it is not so clear what the analogous construction is for algebraic test configurations.   With regard to the Yau-Tian-Donaldson conjecture it is now expected that the notion of algebraic test configuration needs to be extended in some way for it to detect a constant scalar curvature metric (see the examples of Apostolov-Calderbank-Gauduchon-Friedman \cite{Apostolov} and also work of Sz\'ekelyhidi \cite{Sz} which considers general filtrations of the ring of sections).  Thus analytic test configurations provide a context for such extensions.\medskip

Interesting examples of analytic test configurations that do not correspond to algebraic ones appear for example in the setting of Arakelov geometry. In \cite{Chen} Boucksom-Chen construct multiplicative filtrations of the section ring which encodes arithmetic properties of the sections such as their adelic norm. This filtration then gives rise to an analytic test configuration whose geometrical significance remains unclear. \medskip

Another advantage of the notion of analytical test configurations is that they can be defined independent of the polarization, and even in the non-projective case (although we will not consider that further here). If we pick an arbitrary Kähler form $\omega$ we can consider a concave curve of singularity types $[\psi_{\lambda}]$ of $\omega$-plurisubharmonic functions. Then choosing a polarization $L$ we get an analytic test configuration $[\phi_{\lambda}]$ by letting $\phi_{\lambda}$  be the envelope of all positive singular metrics of $L$ bounded from above by $\phi+\psi_{\lambda},$ where $\phi$ is some locally bounded metric of $L.$   In fact, rather than choosing a polarization we could have picked an arbitrary big $(1,1)$-coholomogy class and worked with that instead.\medskip

It should be stressed that in the problem of finding constant scalar curvature metrics it is important to have control of the regularity of geodesics under consideration.  By using approximations to known regularity results of solutions of Monge-Amp\`ere equations,  Phong-Sturm prove that their weak geodesic is in fact $C^{1,\alpha}$ for $0<\alpha<1$ (see \cite{Sturm2}). It is interesting to ask whether such  regularity holds more generally, which is a topic we take up in \cite{Ross3}.  \medskip

\noindent \textbf{Organization: } We start in Section
\ref{sectionconvex} with some motivation from convex analysis and toric geometry, and
Section \ref{sectionprelim} contains preliminary material on the space
of singular metrics, the Monge-Amp\`ere measure and the Aubin-Mabuchi
functional.  The real work starts in Section \ref{sectionenvelopes}
where we consider the maximal envelopes associated to a given
singularity type.  Along the way we prove a generalization of a
theorem of Bedford-Taylor which says that such envelopes are maximal (Theorem \ref{lemma3}).  This is then extended to the case
of a test curve of singularities, and in Section \ref{sectionlegendre}
we discuss the Legendre transform and prove Theorem
\ref{theoremmain1}.

Following these analytic results, we move on to the algebraic picture.  In Section \ref{filt} we associate a test curve to a suitable filtration of the coordinate ring of $(X,L)$, and prove Theorem \ref{theoremmain2}.   We then recall how such filtrations arise from test configurations, and in Section \ref{sectionphongsturm} show how this agrees with the construction of Phong and Sturm.\medskip

\noindent\textbf{Acknowledgments}: We would like to thank Robert
Berman, Bo Berndtsson, Sebastian Boucksom, Yanir Rubinstein and Richard Thomas for
helpful discussions and the referees for their constructive suggestions.     We also thank Dano Kim for pointing out a mistake in a previous version of this paper.  The first author also acknowledges support from a Marie Curie International Reintegration Grant within the 7\textsuperscript{th} European Community Framework Programme (PIRG-GA-2008-230920). 

\section{Convex motivation}\label{sectionconvex}

This section contains some motivation from convex analysis in the study of the homogeneous Monge-Amp\`ere equation.   Much of this material is standard; our main references are the two papers \cite{Yanir} and \cite{Yanir2} by Rubinstein-Zelditch.  We shall presently see how solutions to this equation can be found using the Legendre transform in two different, but ultimately equivalent, ways.   Although this is logically independent of the rest of the paper, the techniques used are similar and give an illustration in the toric setting.\medskip

Let $\Conv(\mathbb{R}^n)$ denote the space of convex functions on
$\mathbb{R}^n.$ We take the
convention that the function identically equal to $-\infty$ is in
$\Conv(\mathbb{R}^n).$

\begin{definition}
  Let $\phi$ be a $C^2$ convex function on an open subset of
  $\mathbb{R}^n$. The \emph{(real) Monge-Amp\`ere measure} of
  $\phi,$ denoted by $MA(\phi),$ is the Borel measure defined as
  $$MA(\phi):=d\frac{\partial \phi}{\partial x_1}\wedge ... \wedge
  d\frac{\partial \phi}{\partial x_{n+1}}.$$  The operator $MA$ has an unique extension to a continuous operator on the cone of (finite-valued) convex functions (see \cite{Yanir2} for references). If $\phi$ is $C^2$
  then
  \begin{equation} \label{monge} MA(\phi)=det(\nabla ^2\phi)dx=(\nabla
    \phi)^*dx,
  \end{equation} 
  i.e.\ the Monge-Amp\`ere measure is the pullback of the Lebesgue
  measure under the gradient map.
\end{definition}

If $\phi\in \Conv(\mathbb{R}^n)$, we say $y$ is a \emph{subgradient} of $\phi$ at $x_0$ if the function $x\mapsto \phi(x) - \phi(x_0) - y\cdot(x-x_0)$ is bounded from below, and we let\[\Delta_{\phi}=\{y: y \text{ is a subgradient for some } x_0\}\] be the set of all subgradients of $\phi$.   So, if $\phi$ is differentiable then
$\Delta_{\phi}$ is simply the image of $\nabla\phi$. One can easily
check that $\Delta_{\phi}$ is convex, that if $r>0$ then
$\Delta_{r\phi} = \Delta_{\phi}$ and $\Delta_{\phi+\psi}\subseteq \Delta_{\phi}+\Delta_{\psi}.$

When $\phi$ is $C^2$ it follows from equation (\ref{monge}) that the
total mass of the Monge-Amp\`ere measure $MA(\phi)$ equals the Lebesgue
volume of $\Delta_{\phi}$. An important fact \cite{Yanir2} is
that this is true for all convex functions on ${\mathbb R}^n$ with linear growth, i.e.
\begin{equation} \label{monge2}
  \int_{\mathbb{R}^n}MA(\phi)=vol(\Delta_{\phi}).
\end{equation}

We say two convex functions $\phi$ and $\psi$ are
\emph{equivalent} if $|\phi-\psi|$ is bounded, and denote this by $\phi\sim \psi$. Since for
two equivalent convex functions $\phi$ and $\psi$ with linear growth we clearly have that
$$\Delta_{\phi}=\Delta_{\psi},$$ it follows from 
\eqref{monge2} that
$$\int_{\mathbb{R}^n}MA(\phi)=\int_{\mathbb{R}^n}MA(\psi)\quad\text{
  whenever } \phi \sim \psi.$$

\begin{definition}
  Let $\phi\in \Conv(\mathbb{R}^n)$ and let $\dot \phi$ be a bounded
  continuous function on $\mathbb{R}^n$. A curve $\phi_t$ in
  $\Conv(\mathbb{R}^n)$, $t\in [a,b],$ is said to solve the
  \emph{Cauchy problem} for the homogeneous real Monge-Amp\`ere
  equation, abbreviated as HRMA, with initial data $(\phi,\dot \phi),$ if the function $\Phi(x,t):=\phi_t(x)$  is convex on $\mathbb{R}^n\times [a,b]$, and satisfies the equation
$$ MA(\Phi)=0\quad \text{ on the strip }\quad \mathbb{R}^n\times (a,b),$$
with initial data
 $$\phi_0=\phi, \qquad \frac{\partial}{\partial
    t}_{|t=0^+}\phi_t=\dot \phi.$$ 
\end{definition}

\begin{remark}
  This convex geometry has particular geometric significance when $\Delta$ is the moment polytope of a polarised toric manifold $(X,L)$.  Then, through the notion of symplectic potentials, there is a correspondence between hermitian metrics on $L$ and convex functions on $\Delta$ (where due care is to be taken to ensure that a given convex function determines a smooth or positive metric on $L$) and the Cauchy problem for the HRMA translates to the Cauchy problem for finding geodesics in the space of hermitian metrics on $L$.  We refer the reader to \cite{Yanir,Yanir2} for a detailed discussion of this idea.
\end{remark}

Now let $\phi_0$ and $\phi_1$ be two equivalent convex functions with linear growth, and $\phi_t$ be the affine
curve between them. The \emph{energy} of $\phi_1$ relative to
$\phi_0,$ denoted by $\mathcal{E}(\phi_1,\phi_0)$ is defined as
$$\mathcal{E}(\phi_1,\phi_0):=\int_{t=0}^1\left(\int_{\mathbb{R}^n}(\phi_1-\phi_0)MA(\phi_t)\right)dt.$$
We observe that by the linear growth assumption it follows that the
relative energy $\mathcal{E}(\phi_1,\phi_0)$ is finite.  This energy
has a cocycle property, namely if $\phi_0,$ $\phi_1$ and $\phi_2$ are
equivalent with finite energy then
$$\mathcal{E}(\phi_2,\phi_0)=\mathcal{E}(\phi_2,\phi_1)+\mathcal{E}(\phi_1,\phi_0),$$
which is easily seen to be equivalent to the fact that
$$\frac{\partial}{\partial
  t}\mathcal{E}(\phi_t,\phi)=\int_{\mathbb{R}^n}\frac{\partial}{\partial
  t}\phi_t MA(\phi_t).$$ The energy along a smooth curve $\phi_t$ of convex functions with linear growth is related
to the Monge-Amp\`ere measure of $\Phi(x,t):=\phi_t(x)$ by the identity
\begin{equation} \label{volumeformula} \int_{\mathbb{R}^n\times
    [a,b]}MA(\Phi)=\frac{\partial}{\partial
    t}_{|t=b}\mathcal{E}(\phi_t,\phi_a)-\frac{\partial}{\partial
    t}_{|t=a}\mathcal{E}(\phi_t,\phi_a).
\end{equation}
Thus a smooth curve $\phi_t$ of equivalent convex functions of linear growth solves the HRMA equation if and only if $\Phi$ is convex and the energy $\mathcal{E}(\phi_t,\phi_a)$ is linear in $t$.\medskip

As is noted in \cite{Yanir2} this Cauchy problem is not always solvable.
Nevertheless there is a standard way to produce solutions $\phi_t$ with $t\in [0,\infty)$ to the homogeneous Monge-Amp\`ere equation with given starting point
$\phi_0=\phi$ using the Legendre transform. We
give a brief account of this.

For simplicity assume from now on that $\phi$ is differentiable and
strictly convex.  Recall that the Legendre transform of $\phi$ is the function on $\Delta_{\phi}$ defined as
$$\phi^*(y):=\sup_x\{x\cdot y-\phi(x)\}$$ (which we can also think of
as being defined on the whole of $\mathbb{R}^n$, by being $-\infty$
outside of $\Delta_{\phi}$). Since $\phi^*$ is defined as the supremum
of the linear functions $x\cdot y-\phi(x),$ it is convex. In fact, one
can show that $\phi$ being differentiable and strictly convex essentially implies that $\phi^*$ is also differentiable and strictly convex (see \cite[Theorem 1]{Rockafellar} for a precise statement that requires a further boundary condition).

For a given $y\in \Delta_{\phi},$ the function $x\cdot y-\phi(x)$ is
strictly concave, and is maximized at the point where the gradient is
zero. Thus 
\begin{equation} \label{eq111} \phi^*(y)=x\cdot y-\phi(x) \quad \text{
    where }\quad \nabla\phi(x) =y,
\end{equation}
and hence
$$\nabla \phi^*(y)=x \quad \text{ where }\quad  \nabla\phi(x) =y.$$
The Legendre transform is an involution.  For using the above formula $\nabla \phi^{**}(x)=y$  for $x$ such
that $\nabla \phi^*(y)=x$ which holds when $\nabla \phi(x)=y,$ i.e.
$$\nabla \phi^{**}(x)=\nabla \phi(x).$$ If $\nabla \phi(x)=y$, then $\phi^*(y)=x\cdot y-\phi(x),$ therefore
$$\phi^{**}(x)=x\cdot y-\phi^*(y)=x\cdot y-(x\cdot y-\phi(x))=\phi(x),$$ as claimed.

\begin{lemma} \label{lemma21} If $\phi_t$ is a curve of convex
  functions, then for any point $y\in \Delta_{\phi_t}$
  $$\frac{\partial}{\partial t} \phi_t^*(y)=-\frac{\partial}{\partial
    t}\phi_t(x),$$ where $x$ is the point such that $\nabla\phi(x)=y$.
\end{lemma}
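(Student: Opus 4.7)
The plan is to use the envelope theorem (in the spirit of Danskin's theorem), applied to the identity $\phi_t^*(y) = \sup_{x'} \{x' \cdot y - \phi_t(x')\}$. Concretely, by the preceding discussion the supremum is attained at the unique $x = x(t,y)$ satisfying $\nabla \phi_t(x) = y$, and at that point
$$ \phi_t^*(y) = x(t,y) \cdot y - \phi_t(x(t,y)). $$
So my first step is to differentiate this expression in $t$ using the chain rule, treating $x(t,y)$ as smoothly varying in $t$ (which follows from the implicit function theorem, since $\phi_t$ is assumed strictly convex and $C^2$ in $x$ and we may assume it depends smoothly on $t$):
$$ \frac{\partial}{\partial t}\phi_t^*(y) = \frac{\partial x}{\partial t} \cdot y - \frac{\partial}{\partial t}\phi_t(x) - \nabla\phi_t(x) \cdot \frac{\partial x}{\partial t}. $$

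The second step is to use the first-order condition $\nabla \phi_t(x) = y$, which makes the first and third terms cancel, leaving precisely $-\partial_t \phi_t(x)$ as required. This is the ``envelope'' phenomenon: because $x$ maximizes $x' \cdot y - \phi_t(x')$, infinitesimal movement of the maximizer contributes nothing to the value of the supremum at first order, so only the explicit $t$-dependence of $\phi_t$ survives.

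I do not expect any real obstacle here; the only subtlety is justifying differentiability of the maximizer $x(t,y)$ in $t$. If one wants to avoid this, the envelope theorem applies directly: since $x' \cdot y - \phi_t(x')$ depends smoothly on $t$ uniformly in $x'$ on compact sets and has a unique maximizer in $x'$, the supremum is differentiable in $t$ with derivative equal to the partial derivative of the integrand at the maximizer, namely $-\partial_t \phi_t(x(t,y))$. Either argument gives the lemma.
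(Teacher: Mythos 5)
Your proof is correct and follows essentially the same route as the paper: both invoke the implicit function theorem to obtain smoothness of the maximizer $x_t$ and then use the stationarity of $x\mapsto x\cdot y - \phi_t(x)$ at $x_t$ (the envelope phenomenon) to kill the contribution from the moving maximizer. Your write-out of the chain rule is in fact a bit cleaner than the paper's, which expresses the same cancellation in slightly informal notation, but there is no substantive difference.
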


\begin{proof}
Let $x_t$ be the solution to the equation $\nabla\phi_t(x_t)=y$. By
  the implicit function theorem $x_t$ varies smoothly with $t$. By
  equation (\ref{eq111}) we know $$\frac{\partial}{\partial
    t} \phi_t^*(y)=\frac{\partial}{\partial t}(x_t\cdot
  y-\phi_t(x))=\frac{\partial}{\partial t}(x_t\cdot
  y-\phi(x))-\frac{\partial}{\partial t}\phi_t(x).$$ Since $x_t\cdot
  y-\phi(x)$ is maximized at $x=x_0$ the derivative of that part
  vanishes at $t=0,$ so we get the lemma for $t=0$, and similarly for
  all $t$.
\end{proof}

This leads us to the following formula relating the energy with the
Legendre transform,

\begin{lemma}
  We have
  \begin{equation} \label{eqen}
    \mathcal{E}(\phi_t,\phi)=\int_{\Delta_{\phi}}(\phi^*-\phi_t^*)dy.
  \end{equation}
\end{lemma}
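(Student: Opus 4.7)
The plan is to verify the identity by showing both sides agree at $t=0$ and have the same derivative in $t$. At $t=0$ both sides vanish: the left-hand side by the definition of the energy (the interval of integration degenerates), and the right-hand side because $\phi_0^* = \phi^*$.

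For the derivative of the left-hand side I would use the differentiated form of the cocycle property noted earlier in the section, namely
$$\frac{\partial}{\partial t}\mathcal{E}(\phi_t,\phi) = \int_{\mathbb{R}^n}\frac{\partial \phi_t}{\partial t}\,MA(\phi_t).$$
For the derivative of the right-hand side I would apply Lemma \ref{lemma21}: for each fixed $y \in \Delta_\phi$ we have $\frac{\partial}{\partial t}\phi_t^*(y) = -\frac{\partial \phi_t}{\partial t}(x_t)$, where $x_t$ is determined by $\nabla\phi_t(x_t) = y$. Since $\phi_t \sim \phi$, the image set $\Delta_{\phi_t}$ coincides with $\Delta_\phi$, so it is legitimate to change variables $y = \nabla\phi_t(x)$ in the resulting integral. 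By the Monge-Amp\`ere formula \eqref{monge}, $(\nabla\phi_t)^* dy = MA(\phi_t)$, which yields
$$-\frac{\partial}{\partial t}\int_{\Delta_\phi}\phi_t^*\,dy = \int_{\mathbb{R}^n}\frac{\partial \phi_t}{\partial t}\,MA(\phi_t),$$
matching the derivative of the left-hand side. Integrating from $0$ to $t$ then gives \eqref{eqen}.

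The main technical obstacle is justifying differentiation under the integral sign and the change of variables in the gradient map. For smooth strictly convex $\phi_t$ with linear growth these are standard, and the equivalence $\phi_t \sim \phi$ (together with the linear growth assumption) ensures that $\Delta_{\phi_t}$ is a fixed convex set of finite Lebesgue volume, so there are no boundary contributions and everything is uniformly integrable. The general case of continuous convex $\phi_t$ follows by approximating $\phi_t$ by smooth strictly convex functions and using the continuity of the real Monge-Amp\`ere operator on equivalence classes of convex functions with linear growth (as recorded in the discussion preceding this lemma).
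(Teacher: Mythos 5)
Your proof is correct and follows essentially the same route as the paper: both argue that the two sides of \eqref{eqen} vanish at $t=0$ and then match their $t$-derivatives using the cocycle formula for $\frac{\partial}{\partial t}\mathcal{E}(\phi_t,\phi)$, Lemma \ref{lemma21}, and the identity $(\nabla\phi_t)^*dy = MA(\phi_t)$. (Minor quibble: the vanishing of the left side at $t=0$ is most directly because $\phi_1-\phi_0\equiv 0$ in the defining integrand, rather than a degenerating interval, but the conclusion is the same.)
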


\begin{proof}
  We noted above that the derivative with respect to $t$ of the
  left-hand side of (\ref{eqen}) is equal to
  $$\int_{\mathbb{R}^n}\frac{\partial}{\partial t}\phi_t MA(\phi_t).$$
  On the other hand, differentiating the right-hand side yields
  \begin{eqnarray*}
    \frac{\partial}{\partial t}\int_{\Delta_{\phi}}(\phi^*-\phi_t^*)dy=-\int_{\Delta_{\phi}}\frac{\partial}{\partial t} \phi_t^*dy=\int_{\Delta_{\phi}}\frac{\partial}{\partial t}\phi_t(\nabla\phi_t^{-1}(y))dy=\\ \int_{\mathbb{R}^n}\frac{\partial}{\partial t}\phi_t (\nabla \phi_t)^*dy=\int_{\mathbb{R}^n}\frac{\partial}{\partial t}\phi_t MA(\phi_t),
  \end{eqnarray*}
  where we used Lemma \ref{lemma21} and the fact that the pullback $(\nabla
  \phi_t)^*dy$ of the Lebesgue measure is $MA(\phi_t)$. Since both sides of the equation
  (\ref{eqen}) is zero when $\phi_t=\phi$ and the derivatives
  coincide, we get that they must be equal for all $t$.
\end{proof}

Now fix a smooth bounded strictly concave function $u$ on $\Delta_{\phi}$ and let 
\begin{equation}
\tilde{\phi}_t:= (\phi^* - tu)^*.\label{eq:defphitilde}  
\end{equation}

\begin{proposition} \label{prophrma} The curve $\tilde{\phi}_t,$ $t\in
  [0,\infty)$ solves the HRMA equation with initial data
$$ \tilde{\phi}_0 = \phi \quad \text{ and } \quad \frac{\partial}{\partial t}|_{t=0^+} \tilde{\phi}_t = u((\nabla \phi)^{-1}).$$
\end{proposition}

To see this note that from (\ref{eqen}) it follows that $$\mathcal{E}(\tilde{\phi}_t,\phi)=\int_{\Delta_{\phi}}(\phi^*-\tilde{\phi}_t^*)dy=\int_{\Delta_{\phi}}(\phi^*-\phi^*+tu)dy=t\int_{\Delta_{\phi}}udy,$$ which is linear in $t$, and the initial conditions follow from \eqref{eq:defphitilde} and Lemma \ref{lemma21}.    The convexity of $\tilde{\Phi}(t,x)= \tilde{\phi}_t(x)$ can of course be shown directly, but it also follows from another characterization of $\tilde{\phi}_t$ that also involves a Legendre transform, but in the $t$-coordinate instead of in the $x$-coordinates which we now discuss.\medskip

Let $A_{\lambda}$ be the subset of $\Delta_{\phi}$ where $u$ is
greater than or equal to $\lambda$ and let $\phi_{\lambda}$ be defined
as 
\begin{equation}
  \label{eq:defphilambda}
\phi_{\lambda}:=\sup\{\psi\leq \phi: \psi\in \Conv(\mathbb{R}^n),
\Delta_{\psi}\subseteq A_{\lambda}\}.
\end{equation}
\begin{lemma} \label{lemma34} The curve of functions $\phi_{\lambda}$
  is concave in $\lambda$ and $$\{\phi_{\lambda}=\phi\}=\{x: \nabla
  \phi(x)\in A_{\lambda}\}.$$
\end{lemma}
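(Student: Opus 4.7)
My plan is to pass to the Legendre side, where both claims become transparent. Any candidate $\psi$ in the defining supremum satisfies $\psi \le \phi$, which dualizes to $\psi^* \ge \phi^*$, and $\Delta_\psi \subseteq A_\lambda$, which is equivalent to $\psi^* \equiv +\infty$ off $A_\lambda$; together these say $\psi^* \ge g$, where $g := \phi^* + \iota_{A_\lambda}$ is the sum of $\phi^*$ with the convex indicator of $A_\lambda$. Concavity and continuity of $u$ make $A_\lambda$ closed and convex, so $g$ is itself a convex lsc function, and its Legendre transform $g^*$ is therefore an admissible candidate that saturates the bound. This gives the clean dual formula
$$\phi_\lambda(x) \;=\; g^*(x) \;=\; \sup_{y \in A_\lambda}\{x\cdot y - \phi^*(y)\}.$$

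Concavity in $\lambda$ is then automatic. For $s \in (0,1)$ and any $y_0 \in A_{\lambda_0}$, $y_1 \in A_{\lambda_1}$, the point $z := sy_0 + (1-s)y_1$ lies in $A_{s\lambda_0 + (1-s)\lambda_1}$ by concavity of $u$, and convexity of $\phi^*$ yields
$$x \cdot z - \phi^*(z) \;\ge\; s\bigl(x\cdot y_0 - \phi^*(y_0)\bigr) + (1-s)\bigl(x\cdot y_1 - \phi^*(y_1)\bigr).$$
Taking suprema over $y_0$ and $y_1$ and invoking the dual formula above gives $\phi_{s\lambda_0 + (1-s)\lambda_1} \ge s\phi_{\lambda_0} + (1-s)\phi_{\lambda_1}$.

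For the contact set, strict convexity and differentiability of $\phi$ make $\phi^*$ strictly convex, so $y \mapsto x\cdot y - \phi^*(y)$ is strictly concave on $\Delta_\phi$ with unique maximizer $y = \nabla\phi(x)$ and maximum value $\phi(x)$. If $\nabla\phi(x) \in A_\lambda$, the affine function $\ell(x') := \nabla\phi(x) \cdot x' - \phi^*(\nabla\phi(x))$ is a supporting hyperplane to $\phi$ at $x$ with $\Delta_\ell = \{\nabla\phi(x)\} \subseteq A_\lambda$, so it is an explicit competitor realizing $\phi_\lambda(x) \ge \ell(x) = \phi(x)$. If $\nabla\phi(x) \notin A_\lambda$, then the unique maximum lies outside $A_\lambda$, and a short compactness argument (using closedness of $A_\lambda$, continuity of $\phi^*$ on its domain, and either the linear-growth assumption on $\phi$ or boundedness of $u$ to exclude escape to infinity in a maximizing sequence) upgrades the pointwise strict inequality to a strict inequality in the supremum. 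This last compactness/coercivity step is the only mildly delicate point; everything else is formal manipulation once the dual formula is in hand.
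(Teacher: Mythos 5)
Your proof is correct. The ``dual formula'' you derive, $\phi_\lambda(x)=\sup_{y\in A_\lambda}\{x\cdot y-\phi^*(y)\}$, is precisely the paper's characterization of $\phi_\lambda$ as the supremum of affine minorants $x\cdot y+C$ of $\phi$ with $y\in A_\lambda$ (the optimal constant being $C=-\phi^*(y)$), so the two proofs rest on the same key fact; the paper simply asserts it (``it is easy to see'') where you derive it from Fenchel duality. The one place the routes genuinely diverge is concavity: the paper argues on the primal side, forming $t\psi_1+(1-t)\psi_2$ from arbitrary competitors and invoking the inclusion $\Delta_{t\psi_1+(1-t)\psi_2}\subseteq t\Delta_{\psi_1}+(1-t)\Delta_{\psi_2}$ together with concavity of $u$, whereas you argue on the dual side using convexity of $\phi^*$ directly. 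Your version is arguably cleaner, since the weighted Minkowski-sum inclusion the paper uses does not follow immediately from the earlier-stated rules $\Delta_{r\phi}=\Delta_\phi$ and $\Delta_{\phi+\psi}\subseteq\Delta_\phi+\Delta_\psi$ (which only give $\Delta_{\psi_1}+\Delta_{\psi_2}$) and needs its own justification, while convexity of $\phi^*$ is immediate. You also correctly flag that the direction $\nabla\phi(x)\notin A_\lambda\Rightarrow\phi_\lambda(x)<\phi(x)$ requires a coercivity remark to rule out a maximizing sequence in $A_\lambda$ escaping to infinity while approaching the unconstrained value $\phi(x)$; the paper does not spell this out either.
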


\begin{proof}
  Let $\psi_i\leq \phi$ be such that $\Delta_{\psi_i}\subseteq
  A_{\lambda_i}$ with $i=1,2$. Let $0<t<1$. From our discussion above
  it follows that $t\psi_1+(1-t)\psi_2\leq \phi$ and
  $$\Delta_{t\psi_1+(1-t)\psi_2}\subseteq
  t\Delta_{\psi_1}+(1-t)\Delta_{\psi_2}\subseteq
  tA_{\lambda_1}+(1-t)A_{\lambda_2}\subseteq
  A_{t\lambda_1+(1-t)\lambda_2},$$ where the last inclusion follows
  from the fact that $u$ was assumed to be concave.  For the second
  statement, it is easy to see that in fact $\phi_{\lambda}$ is equal
  to the supremum of affine functions $x\cdot y+C$ bounded by $\phi$
  and $y$ lying in $A_{\lambda}$.
\end{proof}

\begin{definition}
For $t\geq 0$ let $\widehat{\phi}_t$ be defined as
  $$\widehat{\phi}_t:=\sup_{\lambda}\{\phi_{\lambda}+t\lambda\}.$$
\end{definition}

Since for each $\lambda$ the function $(x,t)\mapsto
\phi_{\lambda}(x)+t\lambda$ is convex in all its variables, and the
supremum of convex functions is convex,
$\widehat{\Phi}(x,t):=\widehat{\phi}_t(x)$ is convex.

\begin{proposition} \label{prop01} Recalling that $\tilde{\phi}_t = (\phi^*-tu)^*$ we have $\tilde{\phi}_t=\widehat{\phi}_t$. In particular this proves that
  $\tilde{\Phi}$ is convex, thereby proving $\tilde{\phi}_t$ solves
  the HRMA equation (Proposition \ref{prophrma}).
\end{proposition}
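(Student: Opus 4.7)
The plan is to compute both sides as nested Legendre-type suprema and then swap the order of the suprema. First I would use the second assertion of Lemma \ref{lemma34}, which identifies $\phi_{\lambda}$ as the upper envelope of affine functions $x\mapsto x\cdot y+C$ lying below $\phi$ with $y\in A_{\lambda}$. For a fixed $y$, the largest admissible constant is $C=-\phi^{*}(y)$, so
$$\phi_{\lambda}(x)=\sup_{y\in A_{\lambda}}\{x\cdot y-\phi^{*}(y)\}.$$

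Substituting this into the definition of $\widehat{\phi}_{t}$ gives
$$\widehat{\phi}_{t}(x)=\sup_{\lambda}\sup_{y\in A_{\lambda}}\{x\cdot y-\phi^{*}(y)+t\lambda\}.$$
Swapping the two suprema and using the defining property $y\in A_{\lambda}\iff u(y)\geq \lambda$, the inner supremum over $\lambda$ becomes $\sup_{\lambda\leq u(y)}t\lambda$. Since $t\geq 0$ this equals $tu(y)$, and therefore
$$\widehat{\phi}_{t}(x)=\sup_{y\in \Delta_{\phi}}\{x\cdot y-(\phi^{*}(y)-tu(y))\}.$$
Because $\phi^{*}$ is declared to be $+\infty$ outside $\Delta_{\phi}$, the right-hand side coincides with $(\phi^{*}-tu)^{*}(x)=\tilde{\phi}_{t}(x)$, proving the first assertion.

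The second assertion is then immediate: it was already observed right after the definition of $\widehat{\phi}_{t}$ that $\widehat{\Phi}(x,t)=\widehat{\phi}_{t}(x)$ is convex as a supremum of functions linear in $t$ and convex in $x$, so the identification $\tilde{\phi}_{t}=\widehat{\phi}_{t}$ transports this convexity to $\tilde{\Phi}$, supplying the hypothesis missing in Proposition \ref{prophrma}.

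The only genuinely nontrivial point is the identification of $\phi_{\lambda}$ as the partial Legendre-type envelope, which rests on the second half of Lemma \ref{lemma34}; once that is in hand, the rest of the argument is a formal exchange of suprema combined with involutivity of the Legendre transform. A minor care-point is the degenerate situation where $A_{\lambda}$ is empty (for $\lambda$ above the maximum of $u$), in which case $\phi_{\lambda}\equiv -\infty$ and the inner sup in the definition of $\widehat{\phi}_{t}$ contributes nothing; this does not affect the computation since the effective range of $\lambda$ in the outer supremum is precisely $\lambda\leq \sup u$, matching the constraint $\lambda\leq u(y)$ after the swap.
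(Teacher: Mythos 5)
Your proof is correct, and it takes a genuinely different route from the paper's. The paper works dynamically: it first computes the right $t$-derivative $\partial_t \widehat{\phi}_t\big|_{t=0^+}(x) = \sup\{\lambda : \phi_\lambda(x)=\phi(x)\}$, identifies this with $u(\nabla\phi(x))$ via the second part of Lemma \ref{lemma34}, propagates this to all $t$ using the semigroup identity $\widehat{\phi}_{t_1+t_2}=\widehat{\psi}_{t_2}$ with $\psi=\widehat{\phi}_{t_1}$, and then invokes Lemma \ref{lemma21} plus the involutivity of the Legendre transform to identify $\widehat{\phi}_t^*$ with $\phi^*-tu$. You instead work purely algebraically: using the same second half of Lemma \ref{lemma34}, you rewrite $\phi_\lambda$ as the partial conjugate $\sup_{y\in A_\lambda}\{x\cdot y-\phi^*(y)\}$, then realize $\widehat{\phi}_t$ as a joint supremum over pairs $(\lambda,y)$ with $\lambda\le u(y)$, and exchange the order of the two suprema; the constraint collapses to $\sup_{\lambda\le u(y)}t\lambda=tu(y)$ (valid since $t\ge 0$), giving $(\phi^*-tu)^*=\tilde{\phi}_t$ directly. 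Your argument is shorter and avoids the differentiability considerations and semigroup step entirely; what it does not produce is the Hamilton--Jacobi-type evolution equation $\partial_t\widehat{\phi}_t=u(\nabla\widehat{\phi}_t)$, which the paper's proof establishes along the way and which illuminates the structure of the solution. Both proofs share the same crucial input (the second assertion of Lemma \ref{lemma34}), and your handling of the degenerate case $A_\lambda=\emptyset$ is adequate.
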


\begin{proof}
  We claim
  \begin{equation} \label{eq700} \frac{\partial}{\partial
      t}\widehat{\phi}_t(x)=u(\nabla \widehat{\phi}_t(x)).
  \end{equation}
  To see this first consider the right-derivative at $t=0$. As we noted above, the gradient of a Legendre transform is the point where the maximum is attained, thus in this case $$\frac{\partial}{\partial t}|_{t=0^+}\widehat{\phi}_t(x)=\sup\{\lambda:
  \phi_{\lambda}(x)=\phi(x)\}.$$ By the second statement in Lemma
  \ref{lemma34} it follows that this supremum is equal to $u(\nabla
  \phi(x)),$ and we are done for $t=0$. On the other hand it is easy
  to see that $$\widehat{\phi}_{t_1+t_2}=\widehat{\psi}_{t_2},$$ with
  $\psi:=\widehat{\phi}_{t_1}$. Using this, the equation
  (\ref{eq700}) holds for all $t$. Thus by Lemma \ref{lemma21} the Legendre transform of $\widehat{\phi}_t$ is equal to $\phi-tu,$ so by the involution property of the Legendre transform $\widehat{\phi}_t$ coincides with $\tilde{\phi}_t$.
\end{proof}

Now the above discussion can be reformulated as follows.  Let
$\psi_{\lambda}$ be a curve in $\Conv(\mathbb{R}^n)$.  We say that $\psi_{\lambda}$ is a \emph{test curve}  if there is a $C$ such that
\begin{enumerate}
\item $\psi_{\lambda}$ is concave in $\lambda$
\item $\psi_{\lambda}-\phi$ bounded for $\lambda<-C$ and
\item $\psi_{\lambda}\equiv -\infty$ for $\lambda>C$.
\end{enumerate}
Given such a test curve let $u$ be
the function on $\Delta_{\phi}$ defined by $$u(y):=\sup\{\lambda: y\in
\Delta_{\psi_{\lambda}}\}.$$ 
and observe that since $\psi_{\lambda}$ is assumed to be concave the function $u$ is also concave.\medskip

Thus our definition of $\phi_{\lambda}$ above \eqref{eq:defphilambda} is
\begin{eqnarray}\label{eq:rephrase}
\phi_{\lambda}&=&
\sup\{\psi\leq \phi: \psi\in \Conv(\mathbb{R}^n),
\Delta_{\psi}\subseteq A_{\lambda}\}\nonumber\\
&=&\sup\{\psi\leq \phi: \psi\in \Conv(\mathbb{R}^n),
\Delta_{\psi}\subseteq \{u\geq \lambda\}\}.
\end{eqnarray}
Hence we in fact have
\begin{equation}
  \label{eq:toricenvelope}
\phi_{\lambda}:=\sup\{\psi:\psi\leq \phi, \psi\leq
\psi_{\lambda}+o(1), \psi\in \Conv(\mathbb{R}^n)\}.
\end{equation}

From Proposition \ref{prop01}, $\widehat{\phi}_t$ solves
the homogeneous real Monge-Amp\`ere equation. Thus in order to get
solutions to the HRMA, instead of starting with a concave function $u$
on $\Delta_{\phi}$ we can just as well start with a test curve
$\psi_{\lambda}$.   It is this second reformulation \eqref{eq:toricenvelope} that extends more naturally to the context of positive metrics on line bundles.\medskip

\begin{remark}
  This convex picture can be given some geometric context by considering toric manifolds (compare \cite[Example 5.2]{Berman}).   Consider a complex toric manifold $(X,L)$ of dimension $n$, so  $L$ carries a compatible torus invariant hermitian metric $\phi_0$.  Pick complex coordinates $z_1,\ldots,z_n$ on a dense complex torus $T=(\mathbb C^*)^n\subset X$ and let $t_i = \ln |z_i|^2$ so we consider $t_1,\ldots,t_n$  as coordinates on $\mathbb R^n$.  Then $\phi_0(z)$ descends to a convex function on $\mathbb R^n$ which, by abuse of notation, we denote by $\phi_0(t)$.  Conversely any function $\phi$ on $\mathbb R^n$ induces a metric on $L|_T$, which will be (strictly) positive as long as $\phi$ is (strictly) convex (one must make some additional hypothesis as to the behaviour of $\phi(t)$ at infinity if one wishes to ensure that this induced metric extends to a smooth or locally bounded metric over all of $X$, but we will not consider that further here).

As an illustration, consider the simplest case of dimension 1 so $X=\mathbb P^1$.    Fix a smooth strictly positive metric on $L$ and assume the strictly convex potential $\phi(t)$ on $\mathbb R$ satisfies $\phi'(0)=0$ and $\lim_{t\to \infty} \phi'(t) = 1$.   Then, for $\lambda\in (0,1)$ define
$$\psi_{\lambda}(t) =\left\{  \begin{array}{ll}
\lambda t & t\ge 0\\
0 & t\le 0
\end{array}\right.
$$
(this can be made into a test curve by extending it to be $\phi+C'$ for negative $\lambda$ and to be $-\infty$ for $\lambda\ge 1$ similar to the example \eqref{eq:exampletestcurve} in the introduction).    Now for $\lambda\in (0,1)$ let $t_{\lambda}$ be the point where $\phi'(t_{\lambda}) = \lambda$.  Then one sees from \eqref{eq:rephrase} that $\phi_{\lambda} = \phi$ for $t\le t_{\lambda}$ and $\phi_{\lambda} = \lambda (t-t_\lambda) + \phi(t_{\lambda})$ for $t\ge t_{\lambda}$.    One can check that $\phi_{\lambda}$ defines a singular metric on $L$ with the same singularity type as $\psi_{\lambda}$.     Observe that $\phi_{\lambda}$ is linear for $t>t_{\lambda}$ making this metric pluriharmonic (and thus maximal) over this set, and it is this maximal property that will be crucial in the generalisation that follows.
\end{remark}

\section{Preliminary Material}\label{sectionprelim}
We collect here some preliminary material on the space of positive
metrics, the (non pluripolar) Monge-Amp\`ere measure and the
Aubin-Mabuchi energy functional.  Most of this material is standard,
and we give proofs only for those results for which we did not find a
convenient reference.

\subsection{The space of positive singular metrics}

\newcommand{\PSH}{PSH} Let $X$ be a complex projective manifold of
complex dimension $n,$ and let $L$ be an ample line bundle on $X$. We start with some preliminaries on singular metrics, for which a convenient reference is \cite{Demaillybook}.  A
continuous (or smooth) hermitian metric on $L$ is a continuous (or smooth) choice of
scalar product on the complex line $L_p$ at each point $p$ on the
manifold. If $f$ is a local holomorphic frame for $L$ on $U_f$, then
one writes $$|f|_h^2=h_f=e^{-\phi_f},$$ where $\phi_f$ is a continuous (or smooth)
function on $U_f$. We will use the convention to let $\phi$ denote the
metric $h=e^{-\phi},$ thus if $\phi$ is a metric on $L,$ $k\phi$ is a
metric on $kL:=L^{\otimes k}$.

The curvature of a smooth metric is given by
$dd^c\phi$ which is the $(1,1)$-form locally defined as $dd^c\phi_f,$
where $f$ is any local holomorphic frame. Here $d^c$ is short-hand for
the differential operator $$\frac{i}{2\pi}(\bar{\partial}-\partial),$$
so $dd^c=i/\pi \partial \bar{\partial}$. A classic fact is that the
curvature form of a smooth metric $\phi$ is a
representative for the first Chern class of $c_1(L)$.  The metric $\phi$ is said to be strictly positive if the curvature $dd^c\phi$
is strictly positive as a $(1,1)$-form, i.e.\ if for any local holomorphic frame
$f,$ the function $\phi_f$ is strictly plurisubharmonic. We let
$\mathcal{H}(L)$ denote the space of smooth strictly positive (i.e.\
locally strictly plurisubharmonic) metrics on $L,$ which is non-empty
since we assumed that $L$ was ample.

A positive singular metric $\psi$ is a metric that can be written as
$\psi:=\phi+u,$ where $\phi$ is a smooth metric and $u$ is a globally defined $dd^c\phi$-psh function, i.e.\ $u$ is upper semicontinuous and
$dd^c\psi:=dd^c \phi+dd^c u$ is a positive $(1,1)$-current. For
convenience we also allow $u\equiv -\infty$. We let $\PSH(L)$ denote
the space of positive singular metrics on $L$.

As an important example, if $\{s_i\}$ is a finite collection of
holomorphic sections of $kL,$ we get a positive metric
$\psi:=\frac{1}{k}\ln(\sum |s_i|^2)$ which is defined by letting for
any local frame $f$, $$e^{-\psi_f}:=\frac{|f|^2}{(\sum
  |s_i|^2)^{1/k}}.$$

We note that $\PSH(L)$ is a convex set, since any convex combination
of positive metrics yields a positive metric. Another important fact
is that if $\psi_i \in \PSH(L)$ for all $i\in I$ are uniformly bounded above by some fixed positive metric, then the upper semicontinuous regularization of the supremum denoted by ${\sup}^*\{\psi_i : i\in I\}$ lies in $\PSH(L)$ as well.  Similarly the upper semicontinuous regularization of the pointwise limit $\lim_i \psi_i$ (when defined) will be denote by ${\lim}_i^* \psi_i$.

If $\psi$ is in $\PSH(L),$ then the translate $\psi+c$ where $c$ is a real constant
is also in $\PSH(L)$.   For any $\psi\in
\PSH(L),$ $dd^c\psi$ is a closed positive $(1,1)$-current, and from
the $dd^c$ lemma it follows that any closed positive current
cohomologous with $dd^c\psi$ can be written as $dd^c\phi$ for some
$\phi$ in $\PSH(L)$. By the maximum principle this $\phi$ is uniquely
determined up to translation.

If there exists a constant $C$ such that $\psi \leq \phi+C,$ we say
that $\psi$ is more singular than $\phi,$ and we will write this as
$$\psi \preceq \phi.$$ If both $\psi \succeq \phi$ and $\phi \succeq
\psi$ we say that $\psi$ and $\phi$ are \emph{equivalent}, which we write as
$\psi \sim \phi$. Following \cite{Guedj} an equivalence class $[\psi]$ is called a \emph{singularity type}, and we use the notation $Sing(L)$ for the set of singularity types. If $\psi$ is equivalent to an element in
$\mathcal{H}(L)$ we say that $\psi$ is \emph{locally bounded}.

The \emph{singularity locus} of a positive metric $\psi$ is the set where $\psi$ is minus infinity, i.e. the set where $\psi_f=-\infty$ when $f$ is a local frame and the \emph{unbounded locus} of $\psi$ is the set where $\psi$ is not locally bounded. Recall that a set is said to be \emph{complete pluripolar} if it is locally the singularity locus of a plurisubharmonic function, while it is called pluripolar if you only have a local inclusion in the singularity set. Pluripolar sets have zero measure with respect to any smooth volume form (since this is true locally with respect to the Lebesgue measure \cite[Corollary 2.9.10]{Klimek}).   In \cite{Guedj} Boucksom-Eyssidieux-Guedj-Zeriahi give the following definition. 

\begin{definition}
A positive metric $\psi$ is said to have \emph{small unbounded locus} if its unbounded locus is contained in a closed complete pluripolar subset of $X$.
\end{definition}

We note that metrics of the form $\frac{1}{k}\ln(\sum |s_i|^2)$ have small unbounded locus, since they are locally bounded away from the algebraic set $\cup_i\{s_i=0\}$ which is a closed pluripolar set.

\subsection{Regularization of positive singular metrics} If $f$ is a plurisubharmonic function on an open subset $U$ of $\mathbb{C}^n$ then using a convolution we can write $f$ as the limit of a decreasing sequence of smooth plurisubharmonic functions on any relatively compact subset of $U$ \cite[Theorem 2.9.2]{Klimek}.

If $\psi$ is a positive singular metric, we can use a partition of unity with respect to some open cover $U_{f_i}$ to patch together the smooth decreasing approximations of $\psi_{f_i}$. Thus any positive singular metric can be written as the pointwise limit of a decreasing sequence of smooth metrics, but of course because of the patching these smooth approximations will in general not be positive.

A fundamental result due to Demailly \cite{Demailly2} is that any positive singular metric can be approximated by metrics of the form $k^{-1}\ln(\sum_i |s_i|^2),$ where $s_i$ are sections of $kL$. Let $\mathcal{I}(\psi)$ denote the multiplier ideal sheaf of germs of holomorphic functions locally integrable against $e^{-\psi_f}dV,$ where $f$ is a local frame for $L$ and $dV$ is an arbitrary volume form. We get a scalar product $(.,.)_{k\psi}$ on the space $H^0(kL\otimes \mathcal{I}(k\psi))$ by letting $$\|s\|^2_{k\psi}:=\int_{X}|s|^2e^{-k\psi}dV.$$ Let $\{s_i\}$ be an orthonormal basis for $H^0(kL\otimes \mathcal{I}(k\psi))$  and set $$\psi_k:=\frac{1}{k}\ln(\sum |s_i|^2).$$

\begin{theorem} \label{demailly}
The sequence of metrics $\psi_k$ converge pointwise to $\psi$ as $k$ tends to infinity, and there exists a constant $C$ such that for large $k,$ $$\psi\leq \psi_k+\frac{C}{k}.$$
\end{theorem}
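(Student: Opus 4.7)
The plan is to prove two pointwise estimates: $\limsup_k \psi_k \le \psi$ via the submean value inequality, and the stated $\psi \le \psi_k + C/k$ via the Ohsawa--Takegoshi extension theorem. Together these give pointwise convergence.

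For the upper bound, fix $x\in X$ and any section $s \in H^0(kL\otimes \mathcal{I}(k\psi))$ with $\|s\|_{k\psi} = 1$. In a local frame $f$ of $L$ near $x$, write $s = g\cdot f^{\otimes k}$ with $g$ holomorphic, so $|g|^2$ is subharmonic. Given $\epsilon > 0$, upper semicontinuity of $\psi$ furnishes $r_\epsilon > 0$ with $\psi \le \psi(x) + \epsilon$ on the ball $B(x, r_\epsilon)$, so the submean value inequality yields
$$|g(x)|^2 \;\le\; \frac{C}{r_\epsilon^{2n}} \int_{B(x, r_\epsilon)} |g|^2\, dV \;\le\; \frac{C}{r_\epsilon^{2n}}\, e^{k(\psi(x)+\epsilon)}.$$
By the extremal characterization $\sum_i |s_i(x)|^2 = \sup\{|s(x)|^2 : \|s\|_{k\psi} \le 1\}$ of the Bergman kernel on the diagonal, the same bound applies to $\sum_i|s_i(x)|^2$; taking $\tfrac{1}{k}\log$ and letting $k\to\infty$ then $\epsilon\to 0$ gives $\limsup_k \psi_k(x) \le \psi(x)$.

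For the stated lower bound, apply the Ohsawa--Takegoshi extension theorem to the one-point subvariety $\{x\}\subset X$ with weight $k\psi$. For $x$ off the singular locus of $\psi$ and $k$ large enough, this produces, for any prescribed $\xi \in (kL)_x$, a global section $s\in H^0(kL\otimes \mathcal{I}(k\psi))$ with $s(x) = \xi$ and
$$\|s\|_{k\psi}^2 \;\le\; C_0\, |\xi|^2 e^{-k\psi(x)},$$
with $C_0$ independent of both $x$ and $k$. Choosing $\xi$ with $|\xi|^2 e^{-k\psi(x)} = 1$ and invoking the extremal characterization again yields $\sum_i |s_i(x)|^2 \ge C_0^{-1} e^{k\psi(x)}$, i.e.\ $\psi_k(x) \ge \psi(x) - (\log C_0)/k$. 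This is the stated inequality with $C = \log C_0$; on the singular locus $\{\psi = -\infty\}$ it holds trivially.

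The main obstacle is ensuring the Ohsawa--Takegoshi constant $C_0$ is uniform in $k$, since this is precisely what lets us write $C/k$ rather than $C\log k/k$. The required uniformity uses ampleness of $L$ in an essential way: one writes $kL = K_X + (kL - K_X)$ and absorbs the curvature of $K_X$ by twisting $k\psi$ with a fixed smooth strictly positive metric on an auxiliary ample line bundle, so that the curvature hypothesis of the extension theorem is satisfied with a constant depending only on the fixed geometry of $(X,L)$. This is the content of Demailly's approximation theorem in \cite{Demailly2}, which one can invoke directly.
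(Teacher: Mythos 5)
The paper does not give a proof of this statement; it simply cites Demailly (\cite{Demailly}, \cite{Demailly2}). So there is no ``paper proof'' to compare with, but your sketch is indeed the route Demailly takes (submean value inequality for $\limsup_k \psi_k \le \psi$, Ohsawa--Takegoshi extension from a point for the lower bound). The upper bound as you write it is fine, and together with the lower bound it delivers the pointwise convergence.

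There is, however, a real gap in the Ohsawa--Takegoshi step. The issue is not merely the uniformity of the constant but the compatibility between the curvature twist and the multiplier ideal $\mathcal{I}(k\psi)$. If you absorb the curvature of $K_X$ by writing the weight on $kL-K_X$ as $(k-m)\psi + \chi$ for a fixed positive metric $\chi$ on $mL - K_X$ (which is the natural way to get a $k$-independent curvature lower bound using ampleness of $L$), then OT produces a section $s\in H^0(kL)$ with $\int_X |s|^2 e^{-(k-m)\psi-\chi}\,dV$ controlled. But $\int_X |s|^2 e^{-k\psi}\,dV$ differs from this by the factor $e^{-m\psi + \chi}$, which is \emph{unbounded} precisely where $\psi$ is singular, so $s$ need not lie in $H^0(kL\otimes \mathcal{I}(k\psi))$ and cannot be used to bound the Bergman metric $\psi_k$ of that smaller Hilbert space from below. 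Twisting by a genuine auxiliary ample bundle $A$ has the same defect plus the added problem that $s$ is then a section of $kL+A$, not $kL$. Demailly's actual argument avoids this by extending locally on small coordinate balls (where there is no canonical bundle issue and OT with weight $k\psi$ works directly with a constant depending only on the radius and the dimension), then globalizing via a cut-off and a twisted $\bar\partial$-estimate carefully arranged so that the correction term stays in $\mathcal{I}(k\psi)$; alternatively one invokes coherence of $\mathcal{O}(kL)\otimes\mathcal{I}(k\psi)$. Your closing citation to \cite{Demailly2} is the right fallback (and is what the paper itself does), but the sketch as written does not close this loop.
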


As a reference see \cite[(A4)]{Demailly} (note in fact the results of Demailly are in fact much stronger than that stated here, and hold in greater generality \cite{Demailly2}). When $\psi$ is assumed to be smooth and strictly positive, a celebrated result by Bouche-Catlin-Tian-Zelditch \cite{Bouche,Catlin,Tian,Zelditch2} on Bergman kernel asymptotics implies that the $\psi_k$ in fact converge to $\psi$ in any $C^m$ norm.  

Using a variation of this construction, Guedj-Zeriahi prove \cite[Theorem 7.1]{Guedj2} that any positive singular metric on an ample line bundle over a compact $X$ is the pointwise limit of a decreasing sequence of smooth positive metrics.

\subsection{Monge-Amp\`ere measures} \label{sectionenv} Let $\psi_i,$
$1\leq i\leq n,$ be an n-tuple of positive metrics, so for each $i,$
$dd^c\psi_i$ is a positive $(1,1)$-current. If all $\psi_i$ are smooth
one can consider the wedge product
\begin{equation} \label{product} dd^c\psi_1\wedge ...\wedge
  dd^c\psi_n,
\end{equation}
which is a positive measure on $X$. The fundamental work of
Bedford-Taylor shows that one can still take the wedge product of
positive currents $dd^c\psi_i$ to get a positive measure as
long as the metrics $\psi_i$ are all locally bounded. The Monge-Amp\`ere measure
of a locally bounded positive metric $\psi,$ is then defined as the
positive measure $$MA(\psi):=(dd^c \psi)^n.$$ This
measure does not put any mass on pluripolar sets. We recall the following important continuity property, proved in \cite{Bedford}.

\begin{theorem}[Bedford-Taylor] \label{thmconv} If $\psi_{i,k}$,
  $1\leq i\leq n+2,$ $k\in \mathbb{N},$ are sequences of locally bounded positive
  metrics such that each $\psi_{i,k}$ decreases to a locally bounded
  positive metric $\psi_i,$ then the signed measures
  $(\psi_{1,k}-\psi_{2,k})dd^c\psi_{3,k}\wedge ...\wedge dd^c\psi_{n+2,k}$ converge weakly to
  $(\psi_1-\psi_2)dd^c\psi_3\wedge ...\wedge dd^c\psi_{n+2}$. If each sequence of locally
  bounded positive metrics $\psi_{i,k}$ instead increase pointwise
  a.e. to a positive metric $\psi_i,$ then again the measures $(\psi_{1,k}-\psi_{2,k})dd^c\psi_{3,k}\wedge ...\wedge dd^c\psi_{n+2,k}$ converge weakly to $(\psi_1-\psi_2)dd^c\psi_3\wedge ...\wedge dd^c\psi_{n+2}$.
\end{theorem}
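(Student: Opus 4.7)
The plan is to reduce to a purely local statement and then invoke the classical Bedford-Taylor machinery by induction on the number of wedge factors. Since everything takes place on a compact manifold covered by finitely many trivializing charts, it suffices to prove that if $u_{i,k}$ ($1\le i\le n+2$) are locally bounded plurisubharmonic functions on an open $U\subset \mathbb{C}^n$ converging monotonically (decreasing, or increasing a.e.) to locally bounded psh functions $u_i$, then for every compactly supported continuous test function $\chi$ on $U$,
$$\int \chi\,(u_{1,k}-u_{2,k})\, dd^c u_{3,k}\wedge\cdots\wedge dd^c u_{n+2,k} \;\longrightarrow\; \int \chi\,(u_1-u_2)\, dd^c u_3\wedge\cdots\wedge dd^c u_{n+2}.$$
By linearity it is enough to handle the sequences $\chi\, u_{j,k}\, dd^c u_{3,k}\wedge\cdots\wedge dd^c u_{n+2,k}$ for $j=1,2$ separately.

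The first main step is the induction base: when there are no wedge factors at all, convergence of $\int \chi u_{j,k}\,dV$ to $\int \chi u_j\,dV$ is immediate from monotone convergence, since locally bounded plurisubharmonic functions are bounded below on the support of $\chi$. For the inductive step I would use integration by parts in the distributional sense: on a slightly larger ball than $\operatorname{supp}\chi$,
$$\int \chi u_{1,k}\, dd^c u_{3,k}\wedge T_k \;=\; \int u_{3,k}\, dd^c(\chi u_{1,k}) \wedge T_k,$$
where $T_k = dd^c u_{4,k}\wedge\cdots\wedge dd^c u_{n+2,k}$ has one fewer $dd^c$ factor. Expanding $dd^c(\chi u_{1,k}) = u_{1,k}\,dd^c\chi + d\chi\wedge d^c u_{1,k} + d^c\chi\wedge du_{1,k} + \chi\, dd^c u_{1,k}$ and applying the inductive hypothesis to each resulting term (together with Cauchy-Schwarz for the mixed $d\chi\wedge d^c u_{1,k}$ type terms, to absorb the gradient factor against $T_k$) gives the claim.

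The second step handles the increasing a.e. case. Here the subtlety is that $\lim_k u_{i,k}$ need not equal $u_i$ on a pluripolar exceptional set, but Bedford-Taylor showed that these pluripolar sets carry zero mass for the Monge-Amp\`ere current $dd^c u_3\wedge\cdots\wedge dd^c u_{n+2}$. This allows the monotone convergence argument in the integration-by-parts identity to go through exactly as in the decreasing case, with the extra verification that the boundary terms arising from the non-compactness of the exceptional locus vanish.

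The main obstacle is the integration-by-parts step in the inductive argument, since the currents $T_k$ are only positive measures (after wedging) and one must rigorously justify moving $dd^c$ past them. This is the technical heart of Bedford-Taylor's work: it requires using local regularization of $u_{i,k}$ by smooth decreasing approximations, carrying out the integration by parts in the smooth setting, and then passing to the limit using the quasi-continuity of psh functions with respect to the Monge-Amp\`ere capacity. Once this machinery is in place, however, both the decreasing and increasing a.e. cases follow uniformly.
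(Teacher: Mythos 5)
The paper does not give a proof of this theorem at all: it is stated as a known continuity property of the Bedford--Taylor wedge product and referenced directly to the original papers of Bedford and Taylor (the decreasing case to the 1982 \emph{Acta} paper, the increasing case implicitly to the 1987 \emph{J.\ Funct.\ Anal.} paper on fine topology, which the authors cite elsewhere as \cite{Bedford2}). So there is no ``paper's own proof'' to compare against; you have sketched an argument for a result that the authors take as a black box.

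Your sketch of the decreasing case is the standard Bedford--Taylor induction and is essentially correct in outline: reduce to local psh functions, write the pairing against a test function, integrate by parts to move one $dd^c$ onto $\chi u_{1,k}$, expand, and close the induction with Cauchy--Schwarz against the positive current to control the mixed first-order terms. The justification of the integration-by-parts identity for merely bounded psh functions (which you flag as the technical heart) is indeed the substance of Bedford--Taylor's work, done via smooth decreasing regularizations and a density argument.

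Your handling of the increasing case, however, does not hold up. You claim it ``goes through exactly as in the decreasing case'' once one knows that Monge--Amp\`ere measures put no mass on pluripolar sets, with the only extra issue being ``boundary terms arising from the non-compactness of the exceptional locus.'' This misidentifies where the difficulty lies. In the increasing case, monotone convergence can no longer be applied term by term in the integration-by-parts expansion, because the sequences $u_{1,k}\,dd^c u_{3,k}\wedge T_k$ are not monotone in any useful sense (the current factors are not decreasing with $k$), and the pointwise increasing limit is only a.e.\ equal to its usc regularization $u_i$. The actual argument requires quasi-continuity of psh functions with respect to Monge--Amp\`ere capacity and convergence-in-capacity estimates --- this is precisely the content of the fine-topology machinery in Bedford--Taylor's 1987 paper, and it is a genuinely separate and later development from the 1982 decreasing case. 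To make your proposal correct you would need to replace the last paragraph of the sketch by an explicit appeal to convergence in capacity of increasing sequences and the resulting convergence of the wedge products, rather than asserting that the decreasing argument carries over.
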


Since the curvature form $dd^c\phi$ of any smooth metric $\phi$ is a
representative of $c_1(L)$, we see that if $\phi_i$ is any $n$-tuple
of smooth metrics then
\begin{equation} \label{eqvolL} \int_X dd^c\phi_1\wedge ...\wedge
  dd^c\phi_n=\int_X c_1(L)^n
\end{equation}
which is just a topological invariant of $L$. Since any positive metric can be approximated from above in the manner of Theorem \ref{thmconv} by positive metrics that are smooth, we see that \eqref{eqvolL} still holds if the $\phi_i$ are merely assumed to be locally bounded instead of smooth.\medskip

Our proof that maximal envelopes are maximal (Theorem \ref{lemma3}) is based on an approximation argument that requires some technical results concerning convergence of plurisubharmonic functions.  Recall that a plurisubharmonic function is, by definition, upper semicontinuous, so
if $\psi$ is a positive metric then for each local frame $f$ the
function $\psi_f$ is upper semicontinuous. The plurifine topology is
defined as the coarsest topology in which all local plurisubharmonic functions
are continuous; a basis for this topology is given by sets of the form
$A\cap \{u>0\},$ where $A$ is open in the standard topology and
$u$ is a local plurisubharmonic function. This topology has the quasi-Lindel\"of property \cite[Thm 2.7]{Bedford2}, meaning that an arbitrary union of plurifine open sets differs from a countable subunion by at most a pluripolar set. Any basis set $A\cap \{u>0\}$ is Borel, so it follows from the quasi-Lindel\"of property that the plurifine open (and closed) sets lie in the completion of the Borel $\sigma$-algebra with respect to any Monge-Amp\`ere measure \cite[Prop 3.1]{Bedford2}.

\begin{definition}
A function $f$ is said to be \emph{quasi-continuous} on a set $\Omega$ if for every $\epsilon>0$ there exists an open set $U$ with capacity less than $\epsilon$ so that $f$ is continuous on $\Omega\setminus U$.
\end{definition}

We refer to \cite[(1.3)]{Bedford} for the definition of capacity, and in \cite[Thm 3.5]{Bedford} it is shown that plurisubharmonic functions are quasi-continuous.  \medskip

If $f_k$ is a sequence of non-negative continuous functions increasing to the characteristic function of an open set $A$ then the characteristic function of a basis set $A\cap \{u>0\}$ is the increasing limit of the non-negative quasi-continuous functions $$kf_{k}(\max\{u,0\}-\max\{u-1/k,0\}).$$ From this fact and the quasi-Lindel\"of property it follows that the characteristic function of any plurifine open set differs from an increasing limit of non-negative quasi-continuous functions at most on a pluripolar set. \medskip

A fundamental property of the Bedford-Taylor product is that it is local in the plurifine topology,
so if $\psi_i=\psi_i'$ for all $i$ on some plurifine open set $O$ then
$$\mathbf{1}_O dd^c\psi_1\wedge ...\wedge dd^c\psi_n=\mathbf{1}_O
dd^c\psi_1'\wedge ...\wedge dd^c\psi_n',$$ where $\mathbf{1}_O$
denotes the characteristic function of $O$. We also have that the convergence in Theorem \ref{thmconv} is local in this topology \cite[Thm 3.2]{Bedford2}, i.e. we get convergence when testing against bounded quasi-continuous functions. 

\begin{lemma} \label{lemmany1}
Let $\psi_k$ be a sequence of locally bounded positive metrics that decreases pointwise (or increases a.e.) to a locally bounded positive metric $\psi,$ and let $O$ be a plurifine open set. Then $$\mathbf{1}_O MA(\psi)\leq \liminf_{k\to \infty}\mathbf{1}_O MA(\psi_k),$$ where the $\liminf$ is to be understood in the weak sense, i.e. when testing against non-negative continuous functions.
\end{lemma}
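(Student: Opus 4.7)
The plan is to exploit the quasi-continuous approximation of $\mathbf{1}_O$ set up just before the lemma, combined with the plurifine-local strengthening of Theorem \ref{thmconv} (convergence against bounded quasi-continuous test functions). First I would invoke that discussion: by the quasi-Lindel\"of property of the plurifine topology, there exist non-negative quasi-continuous functions $g_j$ increasing pointwise to $\mathbf{1}_O$ off a pluripolar set, and which may be taken to satisfy $0 \leq g_j \leq 1$. Since the Monge-Amp\`ere measure of a locally bounded positive metric places no mass on pluripolar sets, for any non-negative continuous test function $f$ one has $\int g_j f\, MA(\psi) \nearrow \int \mathbf{1}_O f\, MA(\psi)$ by monotone convergence, and $\int g_j f\, MA(\psi_k) \leq \int \mathbf{1}_O f\, MA(\psi_k)$ for every $k$.

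Next I would apply Theorem \ref{thmconv} in the special case $\psi_{1,k} = \psi_k + 1$, $\psi_{2,k} = \psi_k$, $\psi_{3,k} = \cdots = \psi_{n+2,k} = \psi_k$, so that the statement collapses to the weak convergence $MA(\psi_k) \to MA(\psi)$. The plurifine-local upgrade cited immediately after Theorem \ref{thmconv} says this convergence in fact holds when testing against bounded quasi-continuous functions, which gives
$$\lim_{k\to \infty}\int_X g_j f\, MA(\psi_k) \;=\; \int_X g_j f\, MA(\psi)$$
for every fixed $j$. Combining with the inequality from the previous paragraph,
$$\int_X g_j f\, MA(\psi) \;\leq\; \liminf_{k\to\infty} \int_X \mathbf{1}_O f\, MA(\psi_k),$$
and letting $j\to \infty$ on the left with monotone convergence produces the desired bound $\int \mathbf{1}_O f\, MA(\psi) \leq \liminf_k \int \mathbf{1}_O f\, MA(\psi_k)$.

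The main obstacle is reconciling the two approximation schemes: the functions $g_j$ only approach $\mathbf{1}_O$ up to a pluripolar exceptional set, while the plurifine-local Bedford-Taylor theorem is stated only for bounded quasi-continuous test functions rather than arbitrary Borel functions such as $\mathbf{1}_O$ itself. The resolution is that Monge-Amp\`ere measures of locally bounded metrics charge no pluripolar sets, so the discrepancy between $g_j$ and $\mathbf{1}_O$ washes out uniformly in $k$ and the two limits can be exchanged. Nothing in the argument distinguishes between the decreasing and a.e.-increasing cases, since Theorem \ref{thmconv} handles both.
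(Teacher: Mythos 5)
Your proof is correct and follows essentially the same route as the paper: approximate $\mathbf{1}_O$ from below by non-negative quasi-continuous functions off a pluripolar set, use the plurifine-local strengthening of Bedford-Taylor convergence to pass to the limit in $k$ against each fixed approximant, and then let the approximants increase, invoking that Monge-Amp\`ere measures of locally bounded metrics charge no pluripolar sets. The paper's own proof is a slightly more compact version of exactly this argument.
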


\begin{proof}
Let $u_i$ be a sequence of quasi-continuous functions increasing to $\mathbf{1}_O$ except on a pluripolar set. Let $f$ be a non-negative continuous function. Since $u_iMA(\psi_k)$ converges weakly to $u_iMA(\psi),$ and $MA(\psi_k)$ does not put any mass on a pluripolar set, 
\begin{equation} \label{mnbv}
\int_X fu_iMA(\psi)=\lim_{k\to \infty}\int_X fu_iMA(\psi_k)\leq \liminf_{k\to \infty}\int_O fMA(\psi_k).
\end{equation}
Now $u_i$ increases to the characteristic function of $O$ except possibly on a pluripolar set, so letting $i$ tend to infinity in (\ref{mnbv}) yields $$\int_O fMA(\psi)\leq \liminf_{k\to \infty}\int_O fMA(\psi_k).$$ 
\end{proof}

For singular $\psi_i$ there is a (non pluripolar) product constructed
by Boucksom-Eyssi\-dieux-Guedj-Zeriahi \cite{Guedj}, building on a local construction due to Bedford-Taylor \cite{Bedford2}. Fix a locally bounded metric $\phi$, and
consider the auxiliary metrics $\psi_{i,k}:=\max\{\psi_i,\phi-k\}$ for
$k\in \mathbb{N},$ and the sets $O_k:=\bigcap_i \{\psi_i>\phi-k\}$. The
\emph{non-pluripolar product} of the currents $dd^c\psi_i,$ here denoted by
$dd^c\psi_1\wedge ...\wedge dd^c\psi_n$ is defined as the limit
$$dd^c\psi_1\wedge ...\wedge dd^c\psi_n:=\lim_{k
  \to\infty}\mathbf{1}_{O_k}dd^c\psi_{1,k}\wedge ...\wedge dd^c
\psi_{n,k}.$$ Since we are assuming that $X$ is compact this limit is
well defined \cite[Prop. 1.6]{Guedj}.  The (non-pluripolar)
Monge-Amp\`ere measure of a positive metric is $\psi$ is defined as
$MA(\psi):=(dd^c \psi)^n$. Essentially by construction, the
non-pluripolar product is local in the plurifine topology \cite[Prop.
1.4]{Guedj}, and is multilinear \cite[Prop 4.4]{Guedj}.

Clearly from the definition and (\ref{eqvolL}), for any
$n$-tuple of positive metrics $\psi_i$ on $L$, $$\int_X dd^c\psi_1\wedge
...\wedge dd^c\psi_n\leq \int_X c_1(L)^n,$$ however the inequality may
well be strict.  \medskip

Combining Lemma \ref{lemmany1} with the fact that the Monge-Amp\`ere measure is local in the plurifine topology yields the following continuity result.

\begin{lemma} \label{lemmabyt} Let $\psi_k$ be a sequence of positive
  metrics decreasing to a positive metric $\psi,$ and let $\phi$ be some locally bounded positive metric. If $O$ is a plurifine open set contained in $\{\psi>\phi-C\}$ for some constant $C$ then 
\begin{equation} \label{more}
  \mathbf{1}_O MA(\psi)\leq \liminf_{k\to
    \infty}\mathbf{1}_O MA(\psi_k),
\end{equation}
where again the $\liminf$ is to be understood in the weak sense. If $\psi_k$ instead is increasing a.e. to $\psi,$ and $O$ is a plurifine open set contained in $\{\psi_j>\phi-C\}$ for some natural number $j$ and some constant $C$ then once again $$\mathbf{1}_O MA(\psi)\leq \liminf_{k\to
    \infty}\mathbf{1}_O MA(\psi_k).$$
\end{lemma}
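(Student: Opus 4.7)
The plan is to reduce to the locally bounded setting of Lemma \ref{lemmany1} by truncating against $\phi$. For each integer $m > 0$, set
$$\tilde{\psi}^{(m)} := \max\{\psi, \phi - m\}, \qquad \tilde{\psi}_k^{(m)} := \max\{\psi_k, \phi - m\}.$$
These are locally bounded positive metrics, the sequence $\tilde{\psi}_k^{(m)}$ inherits the monotonicity of $\psi_k$, and it converges (decreasingly, respectively increasingly a.e.) to $\tilde{\psi}^{(m)}$.

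In the decreasing case, fix any $m > C$. On the plurifine open set $O$ we have $\psi_k \geq \psi > \phi - C > \phi - m$, so $\tilde{\psi}^{(m)} = \psi$ and $\tilde{\psi}_k^{(m)} = \psi_k$ pointwise on $O$. By the plurifine locality of the non-pluripolar product \cite[Prop. 1.4]{Guedj}, together with the fact that the non-pluripolar product coincides with the Bedford-Taylor product whenever the metrics involved are locally bounded, we obtain
$$\mathbf{1}_O MA(\psi) = \mathbf{1}_O MA(\tilde{\psi}^{(m)}) \qquad \text{and} \qquad \mathbf{1}_O MA(\psi_k) = \mathbf{1}_O MA(\tilde{\psi}_k^{(m)}).$$
Applying Lemma \ref{lemmany1} to the decreasing sequence of locally bounded metrics $\tilde{\psi}_k^{(m)}$ yields $\mathbf{1}_O MA(\tilde{\psi}^{(m)}) \leq \liminf_k \mathbf{1}_O MA(\tilde{\psi}_k^{(m)})$ in the weak sense, and combining with the displayed identities gives \eqref{more}.

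The increasing case proceeds analogously with $m > C$ fixed: for $k \geq j$ we have $\psi_k \geq \psi_j > \phi - m$ on $O$, so $\tilde{\psi}_k^{(m)} = \psi_k$ there, and $\psi \geq \psi_j > \phi - m$ holds on $O$ outside the pluripolar exceptional set of the a.e.-increase. The step requiring most care is the identification $\mathbf{1}_O MA(\psi) = \mathbf{1}_O MA(\tilde{\psi}^{(m)})$ in this setting, since the equality $\psi = \tilde{\psi}^{(m)}$ on $O$ may fail on a pluripolar set; however, since the non-pluripolar product charges no pluripolar set and is defined as the limit of the Bedford-Taylor measures $\mathbf{1}_{\{\psi > \phi - k\}} (dd^c \max\{\psi, \phi - k\})^n$, the exceptional contribution vanishes and the identification goes through. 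Lemma \ref{lemmany1} applied to the increasing sequence $\tilde{\psi}_k^{(m)}$ of locally bounded metrics then finishes the proof, noting that the finitely many indices $k < j$ do not affect the liminf.
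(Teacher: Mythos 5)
Your proof is correct and follows essentially the same route as the paper: truncate both $\psi$ and the $\psi_k$ against $\phi$ minus a constant to reduce to the locally bounded setting of Lemma~\ref{lemmany1}, then use plurifine locality of the non-pluripolar product to identify $\mathbf{1}_O MA(\psi)$ with the truncated measure and similarly for $\psi_k$. One small simplification in the increasing case: since $\psi_j\le\psi_k$ a.e.\ for $k\ge j$ and $\psi_k\to\psi$ a.e., we get $\psi_j\le\psi$ a.e., and because an a.e.\ inequality between plurisubharmonic functions holds everywhere, $\psi>\phi-C$ on all of $O$ and not merely off a pluripolar set, so your fallback argument about the exceptional set of the a.e.-increase is unnecessary.
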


\begin{proof}
First assume that $\psi_k$ is decreasing to $\psi$. Let $\psi_k':=\max\{\psi_k,\phi-C\}$ and $\psi':=\max\{\psi,\phi-C\}$. From Lemma \ref{lemmany1} it follows that $$\mathbf{1}_O MA(\psi')\leq \liminf_{k\to
    \infty}\mathbf{1}_O MA(\psi_k'),$$ and since by assumption $\psi'=\psi$ and $\psi_k'=\psi_k$ on $O$ the lemma follows from the locality of the non-pluripolar product. The case where $\psi_k$ is increasing a.e. follows by the same reasoning.
\end{proof}

In \cite[Thm 1.16]{Guedj} it is shown that the non-pluripolar product, when restricted to metrics with small unbounded locus, has the following monotonicity property.

\begin{theorem} \label{monothm}
Let $\psi_i, \psi_i'$ be two $n$-tuples of positive metrics with small unbounded locus, and suppose that for all $i,$ $\psi_i$ is more singular than $\psi_i'$. Then $$\int_X dd^c\psi_1\wedge ...\wedge dd^c\psi_n\leq \int_X dd^c\psi_1'\wedge ...\wedge dd^c\psi_n'.$$
\end{theorem}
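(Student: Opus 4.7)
The strategy is to reduce the statement to a one-factor comparison and then exploit two facts: that the total mass of a Bedford--Taylor product of \emph{locally bounded} metrics is a topological invariant (equation \eqref{eqvolL}), and that the non-pluripolar product is computed by truncated Bedford--Taylor products via plurifine locality.

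By a one-at-a-time swap argument using multilinearity of the non-pluripolar product, it suffices to prove the inequality when $\psi_i=\psi_i'$ for $i\ge 2$ and only $\psi_1\preceq \psi_1'$ differ; write $T := dd^c\psi_2\wedge\cdots\wedge dd^c\psi_n$ for the common $(n-1,n-1)$-current. Adding a constant to $\psi_1$ (which leaves $dd^c$ unchanged) we may assume $\psi_1\le \psi_1'$ pointwise. Fix a smooth positive $\phi\ge \psi_1,\psi_1',\psi_i$ and form the locally bounded truncations $\psi_1^{(k)} := \max\{\psi_1,\phi-k\}$, $\psi_1'{}^{(k)}:=\max\{\psi_1',\phi-k\}$, $\psi_i^{(k)} := \max\{\psi_i,\phi-k\}$, and the Bedford--Taylor product $T^{(k)} := dd^c\psi_2^{(k)}\wedge\cdots\wedge dd^c\psi_n^{(k)}$. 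Note $\psi_1^{(k)}\le \psi_1'{}^{(k)}$ and both are locally bounded.

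Since all factors are locally bounded, equation \eqref{eqvolL} (extended via Bedford--Taylor's continuity theorem) yields
$$\int_X dd^c\psi_1^{(k)}\wedge T^{(k)}=\int_X dd^c\psi_1'{}^{(k)}\wedge T^{(k)}=\int_X c_1(L)^n.$$
Define the plurifine open sets
$$O_k:=\{\psi_1>\phi-k\}\cap\bigcap_{i\ge 2}\{\psi_i>\phi-k\},\quad O_k':=\{\psi_1'>\phi-k\}\cap\bigcap_{i\ge 2}\{\psi_i>\phi-k\};$$
since $\psi_1\le \psi_1'$, $O_k\subseteq O_k'$. By plurifine locality of the Bedford--Taylor product together with the defining limit of the non-pluripolar product,
$$\int_X dd^c\psi_1\wedge T=\lim_{k\to\infty}\int_{O_k} dd^c\psi_1^{(k)}\wedge T^{(k)},$$
and analogously for the primed data. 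Combined with the cohomological identity above, the desired inequality reduces to the \emph{lost-mass} comparison
$$\liminf_{k\to\infty}\int_{X\setminus O_k} dd^c\psi_1^{(k)}\wedge T^{(k)}\;\ge\;\limsup_{k\to\infty}\int_{X\setminus O_k'} dd^c\psi_1'{}^{(k)}\wedge T^{(k)}.$$

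The main obstacle lies in establishing this last inequality, which is equivalent to $\int_{O_k'} dd^c\psi_1'{}^{(k)}\wedge T^{(k)}\ge \int_{O_k} dd^c\psi_1^{(k)}\wedge T^{(k)}$. Decomposing $O_k'=O_k\sqcup(O_k'\setminus O_k)$ and using plurifine locality (on $O_k'\setminus O_k$ one has $\psi_1^{(k)}=\phi-k$ while $\psi_1'{}^{(k)}=\psi_1'$), the extra piece contributes $\int_{O_k'\setminus O_k} dd^c\psi_1'\wedge T$, which is non-negative. It remains to compare $\int_{O_k} dd^c\psi_1\wedge T$ with $\int_{O_k} dd^c\psi_1'\wedge T$; this requires an application of the Bedford--Taylor local comparison principle to the bounded representatives $\psi_1^{(k)}\le \psi_1'{}^{(k)}$ within $O_k$, combined with control of boundary contributions as $k\to\infty$. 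The small unbounded locus hypothesis is essential precisely here: it guarantees that the sets $X\setminus O_k$ and $X\setminus O_k'$ shrink to a pluripolar complement on which $T^{(k)}\to T$ in the controlled sense provided by Lemma~\ref{lemmabyt}, so that the boundary terms vanish in the limit and the telescoping argument closes.
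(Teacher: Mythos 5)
The paper does not prove this statement at all: it is stated as a citation of Boucksom--Eyssidieux--Guedj--Zeriahi, \cite[Thm 1.16]{Guedj}, and no argument is given in the text. So there is no ``paper's proof'' to compare against; you are being asked to reproduce a nontrivial result from the literature.

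Your reduction is sound in its early stages: the one-factor telescoping, passing to locally bounded truncations $\psi_1^{(k)}=\max\{\psi_1,\phi-k\}$, the topological identity $\int_X dd^c\psi_1^{(k)}\wedge T^{(k)}=\int_X c_1(L)^n$, and the rewriting of the non-pluripolar masses as increasing limits $\lim_k\int_{O_k}$ are all correct. But the argument then stalls at exactly the step that carries all the content of the theorem. After decomposing $O_k' = O_k\sqcup(O_k'\setminus O_k)$ and discarding the non-negative extra piece, you are left needing $\int_{O_k} dd^c\psi_1'{}^{(k)}\wedge T^{(k)} \ge \int_{O_k} dd^c\psi_1^{(k)}\wedge T^{(k)}$, and you assert that this ``requires an application of the Bedford--Taylor local comparison principle \ldots combined with control of boundary contributions.'' That sentence names the difficulty rather than resolving it. There is no immediate comparison principle that gives a pointwise or mass inequality between $dd^c\psi_1\wedge T$ and $dd^c\psi_1'\wedge T$ on a plurifinely open set merely from $\psi_1\le\psi_1'$; indeed the classical comparison principle for bounded psh functions with equal boundary values goes in the opposite direction (the smaller function carries \emph{more} Monge--Amp\`ere mass on the sublocus), so it cannot be invoked naively on $O_k$ without a careful account of what happens near $\partial O_k$ and on $X\setminus O_k$.

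The role of the small-unbounded-locus hypothesis is likewise only gestured at. You write that it ``guarantees that the sets $X\setminus O_k$ and $X\setminus O_k'$ shrink to a pluripolar complement on which $T^{(k)}\to T$ in the controlled sense provided by Lemma~\ref{lemmabyt}, so that the boundary terms vanish in the limit.'' Lemma~\ref{lemmabyt} is a one-sided $\liminf$ bound on plurifine open sets, and it does not, by itself, control mass escaping to the unbounded locus in the limit nor produce the needed two-sided comparison. In the BEGZ argument the small-unbounded-locus hypothesis is used through an integration-by-parts formula and a genuine comparison/domination principle for such metrics, not through a limiting application of Lemma~\ref{lemmabyt}. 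As it stands your write-up correctly reduces the theorem to a mass comparison over $O_k$, but then leaves that comparison as an unproven assertion. Since that assertion is essentially equivalent to the theorem itself, the proposal has a genuine gap and does not constitute a proof.
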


There is also a comparison principle for metrics with small unbounded locus \cite[Cor 2.3]{Guedj} and a domination principle \cite[Cor 2.5]{Guedj}. When combined with the comparison principle, the proof of the domination principle in \cite{Guedj} in fact yields a slightly stronger version:

\begin{theorem} \label{domination}
Let $\phi$ be a positive metric with small unbounded locus and suppose that there exists a positive metric $\rho,$ more singular than $\phi$, with small unbounded locus and such that $MA(\rho)$ dominates a volume form. If $\psi$ is a positive metric more singular than $\phi$ and such that $\psi\leq \phi$ a.e. with respect to $MA(\phi),$ then it follows that $\psi\leq \phi$ on the whole of $X$.
\end{theorem}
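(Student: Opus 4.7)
The plan is to follow BEGZ's proof of the (original) domination principle (Corollary 2.5 of \cite{Guedj}), using the BEGZ comparison principle (Corollary 2.3 of \cite{Guedj}) together with the convex-combination metric $\phi_\epsilon := (1-\epsilon)\phi + \epsilon\rho$ to supply the Lebesgue-domination hypothesis which $\phi$ itself may fail.

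After the standard reduction of replacing $\psi$ by $\max(\psi, \phi - N)$ for large $N$---which has small unbounded locus (the same as $\phi$), still satisfies $\psi \le \phi$ $MA(\phi)$-a.e., and whose conclusion implies that for the original $\psi$---we may assume $\psi$ has small unbounded locus. Also normalize $\rho$ so that $\rho \le \phi$. Suppose for contradiction that $M := \sup_X(\psi - \phi) > 0$. Since $\psi_f - \phi_f$ is upper semicontinuous minus continuous in any local frame, $\psi - \phi$ is upper semicontinuous, so the level set $U_\delta := \{\psi > \phi + M - \delta\}$ is open and nonempty for every sufficiently small $\delta > 0$, and hence has positive Lebesgue measure.

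The key idea is to apply BEGZ Corollary 2.5 to the pair $(\phi_\epsilon, \psi_\epsilon)$ with $\phi_\epsilon := (1-\epsilon)\phi + \epsilon\rho$ and $\psi_\epsilon := (1-\epsilon)\psi + \epsilon\rho$. Both have small unbounded locus; $\psi_\epsilon$ is more singular than $\phi_\epsilon$ with the same sup offset; and $\{\psi_\epsilon > \phi_\epsilon\} = \{\psi > \phi\}$. Crucially, by multilinearity and positivity of the non-pluripolar product,
$$ MA(\phi_\epsilon) \;\ge\; \epsilon^n \, MA(\rho) \;\ge\; c\,\epsilon^n\, dV, $$
so $\phi_\epsilon$ satisfies the Lebesgue-domination hypothesis of the original BEGZ result. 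If we can verify the a.e.\ hypothesis $\psi_\epsilon \le \phi_\epsilon$ $MA(\phi_\epsilon)$-a.e., i.e.\ $MA(\phi_\epsilon)(\{\psi > \phi\}) = 0$, then BEGZ Corollary 2.5 applied to $(\phi_\epsilon, \psi_\epsilon)$ forces $\psi_\epsilon \le \phi_\epsilon$ everywhere, hence $\psi \le \phi$ everywhere, contradicting $M > 0$.

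The main obstacle---the step that genuinely requires the comparison principle and is the heart of the strengthening---is the transfer of the a.e.\ vanishing from $MA(\phi)$ to $MA(\phi_\epsilon)$ on $\{\psi > \phi\}$. Expanding $MA(\phi_\epsilon) = \sum_{k} \binom{n}{k}(1-\epsilon)^{n-k}\epsilon^k (dd^c\phi)^{n-k}\wedge (dd^c\rho)^k$ by multilinearity shows mixed products whose supports need not lie in $\operatorname{supp} MA(\phi)$, so the desired vanishing does not follow from the hypothesis alone. I would attack this by applying BEGZ Corollary 2.3 to shifted and recombined pairs such as $(\phi_\epsilon, \psi_\epsilon - (1-\epsilon)M + (1-\epsilon)\delta/2)$ and $(\phi, \psi - M + \delta/2)$, iteratively in $\epsilon$, and using the plurifine locality of the non-pluripolar product (with the approximations $\max(\psi, \phi - k)$ of Section 3 of the excerpt) to bootstrap the $MA(\phi)$-vanishing to a full $MA(\phi_\epsilon)$-vanishing on $U_{\delta/2}$. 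The careful bookkeeping of these iterated comparisons, together with the reduction to small unbounded locus, is precisely what the authors mean by combining the BEGZ proof with the comparison principle.
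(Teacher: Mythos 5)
Your reduction steps (replacing $\psi$ by $\max(\psi,\phi-N)$ to gain small unbounded locus, normalising $\rho\le\phi$) are fine, but the main strategy has a genuine flaw that cannot be patched by the proposed iteration. Perturbing $\phi$ to $\phi_\epsilon:=(1-\epsilon)\phi+\epsilon\rho$ makes $\phi_\epsilon$ \emph{more} singular than $\phi$ (since $\rho$ is more singular), and the hypothesis of BEGZ Corollary 2.5 that you then need, namely $MA(\phi_\epsilon)(\{\psi>\phi\})=0$, is not merely hard to verify: it is equivalent to the conclusion. Indeed $MA(\phi_\epsilon)\ge\epsilon^n MA(\rho)\ge c\epsilon^n\,dV$, so $MA(\phi_\epsilon)(\{\psi>\phi\})=0$ already forces $\{\psi>\phi\}$ to be Lebesgue-null, which by the sub-mean-value inequality for psh functions yields $\psi\le\phi$. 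Worse, under your contradiction hypothesis $M:=\sup(\psi-\phi)>0$ the set $\{\psi>\phi\}$ is non-empty and hence of positive Lebesgue measure, so $MA(\phi_\epsilon)(\{\psi>\phi\})>0$ --- the statement you intend to bootstrap to is simply false. No combination of comparison-principle estimates on the mixed terms $(dd^c\phi)^{n-k}\wedge(dd^c\rho)^k$ can therefore establish it. (A minor additional slip: $\phi$ is not assumed continuous, only to have small unbounded locus, so $\psi-\phi$ need not be upper semicontinuous and $U_\delta$ need not be open; the positivity of Lebesgue measure should instead come from the sub-mean-value argument.)

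The correct adaptation of BEGZ's argument perturbs only $\psi$, leaving $\phi$ untouched, so that the comparison principle can be hit directly against the measure $MA(\phi)$ which you actually control. Set $\psi_\epsilon:=(1-\epsilon)\psi+\epsilon\rho$; since $\psi$ and $\rho$ are both more singular than $\phi$ and have small unbounded locus (after your reduction), so does $\psi_\epsilon$. For $a>0$ the comparison principle gives
\begin{equation*}
\int_{\{\phi+a<\psi_\epsilon\}}MA(\psi_\epsilon)\ \le\ \int_{\{\phi+a<\psi_\epsilon\}}MA(\phi).
\end{equation*}
Because $\rho\le\phi$, the inequality $\phi+a<(1-\epsilon)\psi+\epsilon\rho\le(1-\epsilon)\psi+\epsilon\phi$ rearranges to $\psi>\phi+a/(1-\epsilon)$, so $\{\phi+a<\psi_\epsilon\}\subseteq\{\psi>\phi\}$ and the right-hand side vanishes by hypothesis. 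As $MA(\psi_\epsilon)\ge\epsilon^n MA(\rho)\ge c\epsilon^n\,dV$, this forces $\{\phi+a<\psi_\epsilon\}$ to be Lebesgue-null. Now fix $0<\eta<M$ and take $a=(1-\epsilon)\eta$; then $\{\phi+a<\psi_\epsilon\}=\{\psi-\phi-\eta>\tfrac{\epsilon}{1-\epsilon}(\phi-\rho)\}$ increases, as $\epsilon\downarrow 0$, to $\{\psi>\phi+\eta\}$ minus the pluripolar set $\{\rho=-\infty\}$. The set $\{\psi>\phi+\eta\}$ is non-empty (as $\eta<M$) and hence, by the sub-mean-value inequality, of positive Lebesgue measure, so for small $\epsilon$ the set $\{\phi+a<\psi_\epsilon\}$ has positive Lebesgue measure --- a contradiction. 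This is what the authors mean by combining BEGZ's proof of the domination principle with their comparison principle: the point is precisely that the unchanged $MA(\phi)$ sits on the right-hand side of the comparison, so its vanishing on $\{\psi>\phi\}$ can be used at full strength rather than only to control the $k=0$ term of a mixed-term expansion.
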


\subsection{The Aubin-Mabuchi Energy}\label{sectionaubinmabuchi}

The \emph{Aubin-Mabuchi energy bifunctional} maps any pair of
equivalent positive metrics $\psi_1$ and $\psi_2$ to the number
$$\mathcal{E}(\psi_1,\psi_2):=\frac{1}{n+1}\sum_{i=0}^n\int_X(\psi_1-\psi_2)(dd^c\psi_1)^i\wedge
(dd^c\psi_2)^{n-i}.$$ Observe $$\mathcal{E}(\psi+t,\psi)=t\int_X
MA(\psi).$$

The Aubin-Mabuchi energy restricted to the class of locally bounded
metrics has a cocycle property (see, for example, \cite[Cor
4.2]{Berman2}), namely if $\phi_0,$ $\phi_1$ and $\phi_2$ are locally
bounded equivalent metrics then
$$\mathcal{E}(\phi_0,\phi_2)=\mathcal{E}(\phi_0,\phi_1)+\mathcal{E}(\phi_1,\phi_2).$$

In fact the proof in \cite{Berman2} of the cocycle property extends to the case where the equivalent metrics are only assumed to have small unbounded locus, since the integration-by-parts formula of \cite{Guedj} used in the proof holds in that case.

This leads to an important monotonicity property.  If $\psi_0,\psi_1$
and $\psi_2$ are equivalent with small unbounded locus, and $\psi_0\geq \psi_1$, then
$$\mathcal{E}(\psi_0,\psi_2)\geq \mathcal{E}(\psi_1,\psi_2)$$ 
since $\mathcal{E}(\psi_0,\psi_2)=\mathcal{E}(\psi_0,\psi_1)+\mathcal{E}(\psi_1,\psi_2),$
and $\mathcal{E}(\psi_0,\psi_1)\ge 0$ as it is the
integral of the positive function $\psi_0-\psi_1$ against a positive
measure.

We also record the following lemma, which comes from the locality of
the non-pluripolar product in the plurifine topology.

\begin{lemma} \label{lemmavfr} Let $\psi_1\sim \psi_2$ be such that $\psi_1\geq \psi_2$. Let $\psi_1'$ and
  $\psi_2'$ be two other metrics such that $\psi_1'\sim \psi_2'$ and
  assume that $\{\psi_1'=\psi_2'\}=\{\psi_1=\psi_2\}$ and that $\psi_1'=\psi_1$ and $\psi_2'=\psi_2$ on the set where $\psi_1>\psi_2$. Then
  $$\mathcal{E}(\psi_1',\psi_2')=\mathcal{E}(\psi_1,\psi_2).$$
\end{lemma}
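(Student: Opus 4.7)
The approach is to exploit that the Aubin-Mabuchi energy, viewed term by term, is a sum of integrals of $(\psi_1-\psi_2)$ against mixed non-pluripolar products of $dd^c\psi_1$ and $dd^c\psi_2$, and to argue that each such integral depends only on the behaviour of the $\psi_j$'s on the plurifine open set where $\psi_1\neq\psi_2$. The hypotheses are custom-made so that the two pairs $(\psi_1,\psi_2)$ and $(\psi_1',\psi_2')$ agree on exactly such a set.

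First I would identify the relevant plurifine open set $O:=\{\psi_1>\psi_2\}$. Since $\psi_1\geq \psi_2$ by hypothesis, the complement of $O$ is precisely $\{\psi_1=\psi_2\}$, which by assumption equals $\{\psi_1'=\psi_2'\}$. Combined with $\psi_j'=\psi_j$ on $O$, this gives $\{\psi_1'>\psi_2'\}=O$ as well (and in particular $\psi_1'\geq\psi_2'$ everywhere). To see that $O$ is plurifine open, I would pick a smooth reference metric $\phi$ on $L$ locally, write $\psi_j=\phi+u_j$ with $u_j$ plurisubharmonic (modulo a smooth term), and express
$$O\cap U = \bigcup_{q\in \mathbb Q}\bigl(\{u_1>q\}\cap\{u_2<q\}\bigr);$$
each $\{u_1>q\}$ is plurifine open as a basis set, and $\{u_2<q\}$ is even Euclidean open by upper-semicontinuity of $u_2$, so the union is plurifine open.

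Next I would invoke locality of the non-pluripolar product in the plurifine topology, as recalled in the preliminaries from \cite{Guedj}. Since $\psi_j=\psi_j'$ on $O$, for each $0\le i\le n$ one has
$$\mathbf 1_O (dd^c\psi_1)^i\wedge (dd^c\psi_2)^{n-i}=\mathbf 1_O (dd^c\psi_1')^i\wedge (dd^c\psi_2')^{n-i}.$$
Moreover, on the complement $X\setminus O=\{\psi_1=\psi_2\}=\{\psi_1'=\psi_2'\}$ the integrands $\psi_1-\psi_2$ and $\psi_1'-\psi_2'$ both vanish wherever they are defined (and the pluripolar set where one of the metrics is $-\infty$ is neglected by the non-pluripolar product). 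Consequently
\begin{align*}
\int_X (\psi_1-\psi_2)(dd^c\psi_1)^i\wedge (dd^c\psi_2)^{n-i}
&=\int_O (\psi_1-\psi_2)(dd^c\psi_1)^i\wedge (dd^c\psi_2)^{n-i}\\
&=\int_O (\psi_1'-\psi_2')(dd^c\psi_1')^i\wedge (dd^c\psi_2')^{n-i}\\
&=\int_X (\psi_1'-\psi_2')(dd^c\psi_1')^i\wedge (dd^c\psi_2')^{n-i}.
\end{align*}
Summing over $i$ and dividing by $n+1$ gives $\mathcal E(\psi_1,\psi_2)=\mathcal E(\psi_1',\psi_2')$.

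The only step that requires any actual thought is verifying plurifine openness of $\{\psi_1>\psi_2\}$ (the difference of two quasi-psh functions is not itself quasi-psh, so this does not reduce immediately to the definition), but the rational-level trick above handles it cleanly. Once that is in hand, everything reduces to the locality of the non-pluripolar product and the vanishing of $\psi_1-\psi_2$ off $O$.
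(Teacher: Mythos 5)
Your proof is correct and follows exactly the route the paper indicates (the paper gives no written proof, merely remarking that the lemma ``comes from the locality of the non-pluripolar product in the plurifine topology''): you make the plurifine openness of $\{\psi_1>\psi_2\}$ explicit via the rational-level decomposition, invoke locality of the BEGZ product on that set, and use that $\psi_1-\psi_2$ vanishes off it. Nothing to add beyond the cosmetic remark that your closing sentence slightly misstates the obstruction — the issue with $\psi_1-\psi_2$ is not that it fails to be quasi-psh but that it is undefined where both metrics are $-\infty$, which is exactly what the rational-level trick sidesteps.
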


Following Phong-Sturm in \cite{Sturm4} we can relate weak geodesics to
the energy functional.  Let $I\subset \mathbb R$ be an interval and consider the annulus $A:=\{w\in \mathbb C: -\ln |w|\in I\}$ and $\pi\colon X\times A\to X$ be the projection.

\begin{definition}
  A curve of positive metrics $\phi_t,$ for $t\in I$ is said to be
  a \emph{weak subgeodesic} if there exists a locally bounded positive
  metric $\Phi$ on $\pi^*L$ that is rotation invariant and whose
  restriction to $X\times \{w\}$ equals $\phi_{-\ln |w|}$.   The curve $\phi_t$ is
  said to be a \emph{weak geodesic} if it is a weak subgeodesic and
  furthermore $\Phi$ solves the HCMA equation,
  i.e.\ $$MA(\Phi)=0$$ on $X\times A^{\circ}$.  A weak geodesic $\phi_t$ defined for $0\le t<\infty$ will be called a \emph{weak geodesic ray}.  
\end{definition}

As in the convex setting \eqref{volumeformula} there is a formula
\cite[Proposition 6.2]{Berman5} relating the Aubin-Mabuchi energy of a locally
bounded subgeodesic $\phi_t$ with the Monge-Amp\`ere measure of $\Phi,$
namely

\begin{equation} \label{volfor}
dd_t^c\mathcal{E}(\phi_t,\phi_a)=\pi_*(MA(\Phi)),
\end{equation}
where $\pi_*(MA(\Phi))$ denotes the push-forward of the measure $MA(\Phi)$ with respect to the projection $\pi$.   From this we immediately get the following lemma.

\begin{lemma} \label{lemmageo2} A curve $\phi_t$ of locally bounded
  positive metrics defined for $t\in [0,\infty)$ is a weak geodesic ray if and only if it is a
  subgeodesic and for any $a\in (0,\infty)$ the Aubin-Mabuchi energy
  $\mathcal{E}(\phi_t,\phi_a)$ is linear in $t$.
\end{lemma}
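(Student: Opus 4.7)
The plan is to deduce the lemma directly from the formula (\ref{volfor}), together with the positivity of $MA(\Phi)$ and the harmonicity characterization of affine functions of $t=\log|w|$.

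For the forward implication, suppose $\phi_t$ is a weak geodesic. By definition it is a subgeodesic and the rotation invariant metric $\Phi$ on $\pi^\ast L$ satisfies $MA(\Phi)=0$ on $X\times A^\circ$. Then $\pi_\ast(MA(\Phi))=0$ as a measure on $A^\circ$, so (\ref{volfor}) yields $dd_t^c\mathcal{E}(\phi_t,\phi_a)=0$ on $A^\circ$. Because $\mathcal{E}(\phi_t,\phi_a)$ depends only on $t=\log|w|$, this harmonic, rotation-invariant function on an annulus must be affine in $t$, i.e.\ $\mathcal{E}(\phi_t,\phi_a)$ is linear in $t$ as claimed.

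For the converse, assume $\phi_t$ is a subgeodesic, so that the associated $\Phi$ is a locally bounded positive metric on $\pi^\ast L$, and that $\mathcal{E}(\phi_t,\phi_a)$ is linear in $t$. Then $dd_t^c\mathcal{E}(\phi_t,\phi_a)=0$ on $A^\circ$, so by (\ref{volfor}) the pushforward $\pi_\ast(MA(\Phi))$ vanishes on $A^\circ$. Since $\Phi$ is a locally bounded positive metric, $MA(\Phi)$ is a nonnegative Borel measure, and the vanishing of its pushforward forces $MA(\Phi)(X\times A^\circ)=0$; the same argument applied to every relatively compact sub-annulus $A'\subset A^\circ$ gives $MA(\Phi)=0$ on $X\times A^\circ$. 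Hence $\phi_t$ is a weak geodesic.

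The argument is essentially a one-line consequence of (\ref{volfor}); the only point that requires a word is the passage from $\pi_\ast(MA(\Phi))=0$ to $MA(\Phi)=0$, which is the main (and mild) obstacle. It uses crucially that $MA(\Phi)\geq 0$, which is why the locally bounded subgeodesic hypothesis (ensuring that the non-pluripolar Monge--Amp\`ere measure is a genuine positive measure) cannot be dropped.
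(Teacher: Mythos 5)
Your proof is correct and takes the same approach the paper does: the paper simply remarks that Lemma \ref{lemmageo2} follows immediately from the formula \eqref{volfor}, and your argument just spells out the two implications (vanishing of $\pi_*(MA(\Phi))$ versus affine energy, using positivity of $MA(\Phi)$ and rotation invariance). The unpacking of the converse, noting that nonnegativity of the measure lets you pass from vanishing pushforward to vanishing measure, is the right and complete justification.
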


\section{Envelopes and maximal metrics}\label{sectionenvelopes}

In studying the Dirichlet problem for the HCMA equation it is often possible to give a solution as an envelope in some space
of plurisubharmonic functions (or positive metrics).  Such envelopes
will be crucial in our setting as well.

\begin{definition}
  If $\phi$ is a continuous metric, not necessarily positive, let
  $P\phi$ denote the envelope $$P\phi:=\sup\{\psi\leq \phi, \psi\in
  \PSH(L)\}.$$ Since $\phi$ is assumed to be continuous it follows
  that the upper semicontinuous regularization of $P\phi$ is bounded from above by $\phi$, and hence also by $P\phi$.  Thus $P\phi$ is itself upper semicontinuous and so $P\phi\in \PSH(L)$.
\end{definition}

The next theorem is essentially just a reformulation of a local result of Bedford-Taylor \cite[Corollary 9.2]{Bedford} in our global setting. It follows as a special case of \cite[Proposition 1.10]{Berman2} (letting $K=X$).
\begin{theorem} \label{theorem1} If $\phi$ is a continuous metric then $P\phi=\phi$ a.e. with respect to $MA(P\phi)$. 
\end{theorem}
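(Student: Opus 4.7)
The plan is to establish the stronger statement that $MA(P\phi)$ vanishes on the open set
$$U := \{P\phi < \phi\},$$
which is open because $\phi$ is continuous and $P\phi$ is upper semicontinuous. Since clearly $P\phi = \phi$ on $X \setminus U$, this immediately yields the claim that $P\phi = \phi$ almost everywhere with respect to $MA(P\phi)$.

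To prove $MA(P\phi)|_U = 0$, I would use a local ``balayage'' argument in the style of Bedford-Taylor. Fix $x_0 \in U$; by continuity of $\phi$ and upper semicontinuity of $P\phi$ one can choose a small coordinate ball $B$ around $x_0$ with $\phi - P\phi \geq \epsilon$ on $\bar{B}$ for some $\epsilon>0$, and with the oscillation of $\phi$ on $\bar{B}$ smaller than $\epsilon/2$. On this ball solve the Perron-Bremermann problem: let
$$ v := \left(\sup\bigl\{w \in \PSH(B) : \limsup_{z\to\zeta} w(z) \le P\phi(\zeta) \text{ for all } \zeta \in \partial B\bigr\}\right)^*. $$
By the classical Bedford-Taylor theory applied in $B$, $v$ is plurisubharmonic on $B$, extends continuously to $\bar{B}$ with $v = P\phi$ on $\partial B$, and satisfies the Monge-Amp\`ere equation $(dd^c v)^n = 0$ on $B$.

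The key step is now to glue: define
$$ \tilde v := \begin{cases} \max(v, P\phi) & \text{on } \bar{B}, \\ P\phi & \text{on } X \setminus \bar{B}. \end{cases} $$
Because $v$ agrees with $P\phi$ on $\partial B$ and dominates $P\phi$ on $B$ (as $P\phi$ itself is an admissible competitor in the Perron family), $\tilde v$ is well-defined and belongs to $\PSH(L)$. Moreover, by the maximum principle $v \le \max_{\partial B} P\phi$ on $B$; combined with the oscillation bound on $\phi$ and the gap $\phi - P\phi \ge \epsilon$, this forces $v \le \phi$ on $B$. Hence $\tilde v \le \phi$ globally, and by the extremal defining property of $P\phi$ we obtain $\tilde v \le P\phi$, i.e.\ $v = P\phi$ on $B$. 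Therefore $MA(P\phi) = (dd^c v)^n = 0$ on $B$, and since $x_0 \in U$ was arbitrary, $MA(P\phi)$ vanishes on all of $U$.

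The main obstacle is the gluing step: one must choose the ball $B$ small enough so that the Perron solution $v$, whose boundary values $P\phi|_{\partial B}$ can be strictly less than $\phi|_{\partial B}$ by only a small amount, still stays below $\phi$ throughout $B$. The strict separation $P\phi \le \phi - \epsilon$ on $\bar{B}$, together with the continuity (hence bounded oscillation on small balls) of $\phi$, provides exactly the room needed; this is where the hypothesis that $\phi$ be continuous, rather than merely bounded, is used.
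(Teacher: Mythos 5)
The paper does not actually prove this statement: it simply cites it as a reformulation of Bedford--Taylor \cite[Cor.~9.2]{Bedford} (equivalently \cite[Prop.~1.10]{Berman2}, with $K=X$). Your proposal instead reproduces the Bedford--Taylor balayage argument directly in the global metric setting, which is a legitimate and more self-contained route, and the overall strategy --- show $MA(P\phi)$ vanishes on the open set $U=\{P\phi<\phi\}$ by solving a local Perron--Bremermann problem and gluing --- is sound. The choice of a ball where $\phi - P\phi \ge \epsilon$ and $\mathrm{osc}(\phi,\bar B)<\epsilon/2$ is exactly the right device to keep the local modification below $\phi$.

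There is, however, one imprecise step that should be repaired. You invoke ``classical Bedford--Taylor theory'' to conclude that the Perron--Bremermann envelope $v$ for the boundary data $P\phi|_{\partial B}$ \emph{extends continuously to $\bar B$ with $v=P\phi$ on $\partial B$}. That conclusion is the Bedford--Taylor Dirichlet theorem for \emph{continuous} boundary data; here the boundary data $P\phi|_{\partial B}$ is only upper semicontinuous, and in general $v$ will be neither continuous up to $\bar B$ nor equal to $P\phi$ on $\partial B$. Fortunately, your argument never actually uses those facts: what the gluing lemma requires is only the one-sided inequality
\[
\limsup_{B\ni z\to\zeta} v(z)\;\le\; P\phi(\zeta)\qquad \text{for all }\zeta\in\partial B,
\]
together with $(dd^c v)^n=0$ on $B$. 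Both do hold for usc boundary data: take continuous $g_j\searrow P\phi|_{\partial B}$ on $\partial B$, let $v_j$ be the corresponding continuous Perron--Bremermann solutions with $(dd^c v_j)^n=0$; then the $v_j$ decrease to a psh function that one checks is exactly $v$, which gives the limsup bound (since $\limsup v\le \limsup v_j = g_j(\zeta)$ for each $j$) and, by Bedford--Taylor continuity under decreasing limits, $(dd^c v)^n=0$. With that substitution the gluing and the conclusion $v = P\phi$ on $B$, hence $MA(P\phi)|_B=0$, go through as you wrote. Also note that $P\phi\le v$ on $B$ (which you use) holds because $P\phi|_B$ is itself a competitor in the Perron family --- worth stating explicitly, as it is what makes $\tilde v\ge P\phi$ and hence $\tilde v=P\phi$ after the extremal comparison.
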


Recall that if $A$ is a closed set and $\mu$ is a Borel measure we say that $\mu$ is said to be \emph{concentrated} on $A$ if $\mathbf{1}_A \mu=\mu,$ or equivalently $\mu(A^c)=0$. Thus another way of formulating Theorem \ref{theorem1} is to say that $MA(P\phi)$ is concentrated on $\{P\phi=\phi\}$. We now extend this result to more general envelopes that arise from the additional data of singularity type.

\begin{definition}
  Given a positive metric $\psi\in \PSH(L)$ let $P_{\psi}$ denote the
  projection operator on $\PSH(L)$ defined by $$P_{\psi}\phi:=
  \sup\{\psi' \leq \min\{\phi,\psi\}, \psi'\in \PSH(L)\}.$$ We also
  define $P_{[\psi]}$ by  $$P_{[\psi]}\phi:=\lim_{C\to
    \infty}P_{\psi+C}\phi=\sup\{\psi' \leq \phi, \psi'\sim \psi,
  \psi'\in \PSH(L)\}.$$
\end{definition}

Clearly $P_{\psi}\phi$ is monotone with respect to both $\psi$ and
$\phi$.  Since $\min\{\phi,\psi\}$ is upper semicontinuous, it follows
that the upper semicontinuous regularization of $P_{\psi}\phi$ is
still less than $\min\{\phi,\psi\},$ and thus $P_{\psi}\phi\in
\PSH(L)$. By this it follows that
$P_{\psi}(P_{\psi}\phi)=P_{\psi}\phi,$ i.e.\ that $P_{\psi}$ is indeed
a projection operator on $\PSH(L)$. One also notes that the upper
semicontinuous regularization of $P_{[\psi]}\phi,$ lies in $\PSH(L)$
and is bounded by $\phi$.

\begin{definition}\label{definitionmaximal}
  The \emph{maximal envelope} of $\phi$ with respect to the
  singularity type $[\psi]$ is defined to be
$$\phi_{[\psi]} := \usc(P_{[\psi]}\phi)$$
where $\usc$ denotes the process of taking the upper-semicontinuous regularization.
\end{definition}

\begin{definition} \label{defmax2}
  If $\psi\in \PSH(L),$ then $\psi$ is said to be
  \emph{maximal} with respect to a metric $\phi$ if $\psi\leq \phi$ and furthermore $\psi=\phi$ a.e.\ with respect to $MA(\psi)$. Similarly, if $A$ is a measurable set, we say that $\psi$ is maximal with respect to $\phi$ on $A$ if $\psi\leq \phi$ and $\psi=\phi$ a.e.\ on $A$ with respect to $MA(\psi)$.
\end{definition}

\begin{remark}\label{rmk:singularitytype}
The terminology is justified by a proof below that the maximal envelope of a continuous metric $\phi$ is maximal with respect to $\phi$.   As it is defined as a limit, it is not clear from the definition if $\phi_{[\psi]}$ is equivalent to $\psi$ (this can be shown if $\psi$ has algebraic singularities by passing to a suitable resolution, and we refer the reader to \cite{Ross3} for a further study of maximal envelopes).   For this reason,  the method in the proof of Theorem \ref{theorem1} in \cite{Berman2} does not directly apply, so instead we will use an approximation argument.     
\end{remark}

Our use of the word maximal is motivated by the following property:

\begin{proposition} \label{maximality}
Let $\psi$ be maximal with respect to a metric $\phi$.   Suppose also that there exists a positive metric $\rho\succeq \psi$ with small unbounded locus and such that $MA(\rho)$ dominates a volume form.   Then for any $\psi'\sim \psi$ with $\psi\le \phi$ we have $\psi'\leq \psi$.
\end{proposition}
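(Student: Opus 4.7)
The plan is to deduce this as a direct application of the domination principle of Theorem \ref{domination}, with our $\psi$ playing the role of ``$\phi$'' in that theorem and our $\psi'$ playing the role of ``$\psi$.'' The task reduces to verifying the four hypotheses in the substituted form.

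First I would check that $\psi$ itself has small unbounded locus. Since $\rho \succeq \psi$ means $\rho \le \psi + C$ pointwise for some constant $C$, any point $p$ in the unbounded locus of $\psi$ admits a sequence $p_n \to p$ with $\psi(p_n)\to -\infty$, and hence $\rho(p_n)\to -\infty$ as well; so the unbounded locus of $\psi$ is contained in that of $\rho$, which by hypothesis sits inside a closed complete pluripolar set. The auxiliary metric $\rho$ supplied by the proposition already satisfies the remaining two conditions required on ``$\rho$'' in Theorem \ref{domination}: it is more singular than $\psi$, has small unbounded locus, and $MA(\rho)$ dominates a volume form.

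Next, since $\psi'\sim \psi$ we in particular have $\psi'\preceq \psi$, so $\psi'$ is more singular than $\psi$, as required. The key remaining point is the a.e.\ inequality $\psi'\le \psi$ with respect to $MA(\psi)$: the maximality hypothesis gives $\psi=\phi$ almost everywhere with respect to $MA(\psi)$, and by assumption $\psi'\le \phi$ on the whole of $X$, so on a set of full $MA(\psi)$-measure we have $\psi'\le \phi = \psi$. Theorem \ref{domination} now yields $\psi'\le \psi$ everywhere, which is the conclusion. There is no serious obstacle here beyond the hypothesis-bookkeeping above; the content of the proposition is really that maximality, together with the existence of $\rho$, lets us upgrade the pointwise bound $\psi'\le \phi$ to the pointwise bound $\psi'\le \psi$ via the BEGZ domination principle.
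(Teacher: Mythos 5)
Your proof is correct and follows the paper's own argument exactly: reduce to the BEGZ domination principle (Theorem \ref{domination}), using the maximality hypothesis $\psi=\phi$ a.e.\ $MA(\psi)$ together with $\psi'\le \phi$ to obtain $\psi'\le\psi$ a.e.\ with respect to $MA(\psi)$, then conclude. The only thing you add to the paper's one-line proof is the explicit verification that $\psi$ itself has small unbounded locus (via the containment of its unbounded locus in that of $\rho$), which is indeed a hypothesis of Theorem \ref{domination} that the paper leaves implicit.
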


\begin{proof}
Since $\psi'\leq \phi$, the maximality assumption yields $\psi'\leq \psi$ a.e.\ with respect to $MA(\psi),$ so the proposition thus follows from the domination principle (Theorem \ref{domination}).
\end{proof}

The next two lemmas are the main steps in showing that maximal envelopes are maximal.

\begin{lemma} \label{lemlim} Let $\psi_k$ be a sequence of positive
  metrics increasing a.e.\ to a positive metric $\psi,$ and assume that
  all $\psi_k$ are maximal with respect to a fixed continuous metric $\phi$ on some plurifine open set $O$.
  Then $\psi$ is maximal with respect to $\phi$ on $O$.
\end{lemma}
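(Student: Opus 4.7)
The plan is to show that $MA(\psi)$ assigns no mass to $O\cap\{\psi<\phi\}$. First observe that $\psi_k\le\phi$ and $\psi=(\sup_k\psi_k)^*$ force $\psi_j\le\psi\le\phi$ everywhere. Since $\phi$ is continuous while psh functions are plurifine continuous, the sets $\{\psi<\phi-\epsilon\}$ and $\{\psi_j>\phi-C\}$ are plurifine open, and writing $O\cap\{\psi<\phi\}=\bigcup_{n\ge 1}(O\cap\{\psi<\phi-1/n\})$ reduces the problem to showing $MA(\psi)(W_\epsilon)=0$ for every $\epsilon>0$, where
$$W_\epsilon:=O\cap\{\psi<\phi-\epsilon\}.$$

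On $W_\epsilon$ one has $\psi_j\le\psi<\phi-\epsilon<\phi$ for every $j$, so $W_\epsilon\subset O\cap\{\psi_j<\phi\}$; the maximality of each $\psi_j$ on $O$ therefore gives $MA(\psi_j)(W_\epsilon)=0$. What remains is to pass this vanishing to the limit $j\to\infty$ via the monotone-increasing version of Lemma \ref{lemmabyt}.

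The main obstacle is that Lemma \ref{lemmabyt} requires the plurifine open set in question to lie inside $\{\psi_j>\phi-C\}$ for some $j,C$, and this can fail on $W_\epsilon$ as a whole when the $\psi_j$ are highly singular. The remedy is a plurifine decomposition: for each $(j,C)\in\mathbb{N}^2$ the plurifine open set
$$W_\epsilon^{j,C}:=W_\epsilon\cap\{\psi_j>\phi-C\}$$
is contained in $\{\psi_j>\phi-C\}$, so Lemma \ref{lemmabyt} gives $\mathbf{1}_{W_\epsilon^{j,C}}MA(\psi)\le\liminf_k\mathbf{1}_{W_\epsilon^{j,C}}MA(\psi_k)$ in the weak sense; combined with $MA(\psi_k)(W_\epsilon^{j,C})=0$ from the previous step, testing against arbitrary non-negative continuous functions yields $MA(\psi)(W_\epsilon^{j,C})=0$ for every $(j,C)$. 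The countable union $\bigcup_{j,C}W_\epsilon^{j,C}$ then covers $W_\epsilon$ off of the pluripolar set $\bigcap_j\{\psi_j=-\infty\}$, which carries no mass for the non-pluripolar product $MA(\psi)$; countable subadditivity now gives $MA(\psi)(W_\epsilon)=0$, completing the proof. The only genuinely delicate point is the $(j,C)$-decomposition, which is forced on us by the possible singularity of the $\psi_j$ relative to $\phi$.
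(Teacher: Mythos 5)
Your proof is correct, and its skeleton matches the paper's: the increasing version of Lemma \ref{lemmabyt} applied to plurifine open sets sitting inside $\{\psi_j>\phi-C\}$, followed by exhausting over $C$ using that $MA(\psi)$ puts no mass on the pluripolar set where the $\psi_j$ are $-\infty$. The one genuine difference is how you extract the conclusion from the weak $\liminf$. The paper works on the side of $\{\psi=\phi\}$: it first shows each $\mathbf{1}_O MA(\psi_k)$ is concentrated on the \emph{closed} set $\{\psi=\phi\}$, then invokes the elementary fact that if $\mu\le\liminf\mu_k$ weakly and all $\mu_k$ are concentrated on a closed set $A$ then so is $\mu$. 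You work instead on the complementary side, restricting Lemma \ref{lemmabyt} to the set $W_\epsilon^{j,C}\subset O\cap\{\psi<\phi\}$ where every $MA(\psi_k)$ already vanishes outright (because $W_\epsilon\subset O\cap\{\psi_k<\phi\}$); testing the $\liminf$ inequality against nonnegative continuous functions then gives $MA(\psi)(W_\epsilon^{j,C})=0$ directly, with no closed-set lemma needed. This is a legitimate and slightly more hands-on, measure-theoretic variant, and one could argue it is marginally cleaner since you avoid any topological argument. Two harmless redundancies in your setup: since $\psi_1\le\psi_j$ for all $j$, one can fix $j=1$ throughout (the union over $C$ alone already covers $W_\epsilon$ up to the pluripolar set $\{\psi_1=-\infty\}$), and since $\{\psi<\phi\}$ is already open (as $\psi-\phi$ is upper semicontinuous) and contained in every $\{\psi_k<\phi\}$, the $\epsilon$-decomposition is unnecessary — your argument for $W_\epsilon$ goes through verbatim with $W=O\cap\{\psi<\phi\}$ in its place.
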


\begin{proof}
Since $\phi$ was assumed to be continuous, $\psi\leq \phi$. Now, for all $k$
  $$\{\psi_k=\phi\}\subseteq \{\psi=\phi\}$$ and thus by the the
  maximality of $\psi_k$, we know $\mathbf{1}_O MA(\psi_k)$ is concentrated on $\{\psi=\phi\}$. Since $\psi\leq \phi$ we have that $\{\psi=\phi\}=\{\psi\geq \phi\},$ and since $\phi$ is continuous this is a closed set. 
Let $C$ be a constant. The set $O\cap \{\psi_1>\phi-C\}$ is plurifinely open, so by Lemma \ref{lemmabyt}  it follows that 
\begin{equation} \label{poil}
\mathbf{1}_O  \mathbf{1}_{\{\psi_1>\phi-C\}}MA(\psi)\leq \liminf_{k\to\infty} \mathbf{1}_O \mathbf{1}_{\{\psi_1>\phi-C\}}MA(\psi_k).
\end{equation}
It is easy to see that if $\mu_k$ is a sequence of measures all concentrated on a closed set $A$, and $$\mu\leq \liminf_{k\to \infty}\mu_k$$ in the weak sense, then $\mu$ is also concentrated on $A$. It thus follows from (\ref{poil}) that $\mathbf{1}_O  \mathbf{1}_{\{\psi_1>\phi-C\}}MA(\psi)$ is concentrated on $\{\psi=\phi\}$. Since $MA(\psi)$ puts no mass on the pluripolar set $\{\psi_1=-\infty\}$ the lemma follows by letting $C$ tend to infinity. 
\end{proof}

\begin{lemma} \label{xcv}
Let $\psi\in \PSH(L)$ and let $\phi$ be a continuous metric. Then the envelope $P_{\psi}\phi$ is maximal with respect to $\phi$ on the plurifine open set $\{\psi>\phi\}$.
\end{lemma}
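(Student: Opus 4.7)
The plan is to reduce to the continuous-obstacle case of Theorem~\ref{theorem1} by approximating $\psi$ from above, and then to pass to the limit using the plurifine convergence result of Lemma~\ref{lemmabyt}. First I would invoke the Guedj-Zeriahi regularization recalled in Section~\ref{sectionprelim} to choose smooth positive metrics $\psi_k \searrow \psi$. For each $k$, since both $\phi$ and $\psi_k$ are continuous, so is $\min\{\phi,\psi_k\}$, and Theorem~\ref{theorem1} applied to this obstacle gives that the envelope $u_k := P_{\psi_k}\phi = P(\min\{\phi,\psi_k\})$ satisfies $u_k = \min\{\phi,\psi_k\}$ almost everywhere with respect to $MA(u_k)$. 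On the (standard) open set $\{\psi_k > \phi\}$ this reads $u_k = \phi$ a.e.\ $MA(u_k)$, i.e.\ $u_k$ is maximal with respect to $\phi$ there; and since $\psi_k \geq \psi$, the plurifine open set $O := \{\psi > \phi\}$ is contained in $\{\psi_k > \phi\}$ for every $k$.

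Next I would verify that $u_k \searrow u := P_\psi\phi$ pointwise, using that $P_\bullet\phi$ is monotone in the first argument and that the decreasing limit $\lim u_k \in \PSH(L)$ is bounded by $\min\{\phi,\psi_k\}$ for each $k$, hence by $\min\{\phi,\psi\}$. Fix a smooth strictly positive reference metric $\phi_0$ and for each $m \in \mathbb{N}$ put $O_m := O \cap \{u > \phi_0 - m\}$, a plurifine open set. Since $MA(u)$ puts no mass on $\{u = -\infty\}$, $\mathbf{1}_O MA(u)$ is the monotone limit of the $\mathbf{1}_{O_m}MA(u)$, so it suffices to show each $\mathbf{1}_{O_m}MA(u)$ is concentrated on $A := \{u = \phi\}$. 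Applying Lemma~\ref{lemmabyt} to the decreasing sequence $u_k \searrow u$ with reference metric $\phi_0$ (the containment $O_m \subseteq \{u > \phi_0 - m\}$ being built in) yields
$$\mathbf{1}_{O_m} MA(u) \;\leq\; \liminf_{k\to\infty} \mathbf{1}_{O_m} MA(u_k)$$
in the weak sense, while each measure on the right is concentrated on the closed set $A_k := \{u_k = \phi\}$.

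The hard part will be deducing from this that $\mathbf{1}_{O_m}MA(u)$ is concentrated on $A$, since the ``maximality sets'' $A_k$ change with $k$ and so one cannot directly invoke the closed-set argument of Lemma~\ref{lemlim}. The key observation is that $\{A_k\}$ is a decreasing sequence of closed subsets of the compact $X$ with intersection $A$: the inclusion $A \subseteq A_k$ comes from $u \leq u_k \leq \phi$, and the reverse from the pointwise convergence $u_k \to u$. Hence for any non-negative continuous $f$ vanishing on an open neighborhood $U$ of $A$, compactness forces $A_k \subseteq U$ for all sufficiently large $k$, so $\sup_{A_k} f \to 0$, and together with the uniform mass bound $MA(u_k)(X) = \int_X c_1(L)^n$ this gives
$$\int f \,\mathbf{1}_{O_m} MA(u_k) \;\leq\; (\sup_{A_k} f)\cdot MA(u_k)(X) \;\longrightarrow\; 0.$$
Combining with the weak inequality above yields $\int f\, \mathbf{1}_{O_m}MA(u) = 0$ for every such $f$; Urysohn's lemma then implies $\mathbf{1}_{O_m} MA(u)$ is concentrated on $A$, as required.
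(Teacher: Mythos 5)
Your proof is correct and follows essentially the same approach as the paper's: approximate the obstacle $\min\{\phi,\psi\}$ from above by a decreasing sequence of continuous ones, apply Theorem~\ref{theorem1} to each, observe that the contact sets are closed and decrease to $\{P_\psi\phi=\phi\}$, and combine Lemma~\ref{lemmabyt} with the fact that concentration on closed sets survives under weak $\liminf$. The only cosmetic difference is in the last step, where the paper deduces concentration on the fixed closed set $\{P\phi_{k_0}=\phi\}$ for each $k_0$ and then intersects, while you treat the shrinking sequence of closed sets directly via the uniform mass bound $MA(u_k)(X)=\int_X c_1(L)^n$ and a Urysohn argument; likewise, invoking Guedj--Zeriahi to get smooth \emph{positive} approximants is more than is needed, since plain smooth decreasing approximants suffice (only continuity of the obstacle matters).
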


\begin{proof}
Clearly $P_{\psi}\phi\le \phi$, so we have to show $P_{\psi}\phi$ is equal to $\phi$ almost everywhere on $\{\psi>\phi\}$ with respect to $MA(P_{\psi}\phi)$.  Let $\phi_k$ be a sequence of
  continuous metrics decreasing pointwise to $\min\{\phi,\psi\},$ so that
  $\phi_k\le \phi$ for all $k$ and $\phi_k=\phi$ on the set $\{\psi>\phi\}.$ For example let $\phi_k:=\min\{\phi,\psi_k\}$ where $\psi_k$ is a sequence of smooth metrics decreasing pointwise to $\psi$. From Theorem \ref{theorem1} it follows that $MA(P\phi_k)$ is concentrated on $\{P\phi_k=\phi_k\},$ and since $\phi_k=\phi$ when $\psi>\phi$ we see $\mathbf{1}_{\{\psi>\phi\}}MA(P\phi_k)$ is concentrated on $\{P\phi_k=\phi\}$. Now $P\phi_{k}$ is decreasing in $k$ and $\lim_{k\to
    \infty}P\phi_k\leq \min\{\phi,\psi\}$. At the same time, for any $k\in \mathbb{N}$ we clearly have that $P_{\psi}\phi\leq P\phi_k,$ which taken together means that
$$\lim_{k\to \infty}P\phi_k=P_{\psi}\phi.$$ Since $P\phi_k\leq \phi$ this implies that $\{P\phi_k=\phi\}$ is decreasing in $k$ and
\begin{equation} \label{nuenu}
\{ P_\psi \phi = \phi\}=\bigcap_{k\in \mathbb{Z}}\{P\phi_k =\phi\}.
\end{equation}
Let $O$ denote the plurifine open set $\{\psi>\phi\}\cap \{P_{\psi}\phi>\phi-C\}$. By Lemma \ref{lemmabyt} we know $$\mathbf{1}_O MA(P_{\psi}\phi)\leq \liminf_{k\to \infty}\mathbf{1}_O MA(P\phi_k),$$ and thus we conclude that $\mathbf{1}_O MA(P_{\psi}\phi)$ is concentrated on $\{P\phi_k=\phi\}$ for any $k,$ so by \eqref{nuenu} we get that $\mathbf{1}_O MA(P_{\psi}\phi)$ is concentrated on $\{P_\psi \phi = \phi\}$. Since $MA(P_{\psi}\phi)$ puts no mass on the pluripolar set $\{P_{\psi}\phi=-\infty\},$ letting $C$ tend to infinity yields the lemma.
\end{proof}

\begin{theorem} \label{lemma3} Let $\psi\in \PSH(L)$ and let $\phi$ be
  a continuous metric. Then $\phi_{[\psi]}$ is maximal
  with respect to $\phi,$ i.e.\ $\phi_{[\psi]}=\phi$ a.e.\ with respect to $MA(\phi_{[\psi]})$.
\end{theorem}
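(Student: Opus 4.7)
The plan is to exhibit $\phi_{[\psi]}$ as the a.e.\ increasing limit of a sequence of positive metrics each of which we already know to be maximal on a suitable plurifine open set, and then invoke Lemma \ref{lemlim} together with the fact that $MA$ is non-pluripolar. Concretely, I would set
\[
  \psi_k := P_{\psi+k}\phi, \qquad k\in\mathbb{N}.
\]
Monotonicity of $P_{(\cdot)}\phi$ in the first argument gives $\psi_k\leq \psi_{k+1}\leq \phi$, and by the very definition of $P_{[\psi]}\phi$ we have $\lim_{k\to\infty}\psi_k=P_{[\psi]}\phi$ pointwise. Since an increasing limit of members of $\PSH(L)$ agrees with its upper semicontinuous regularization outside a pluripolar set, this means $\psi_k\nearrow \phi_{[\psi]}$ almost everywhere.

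Next, by Lemma \ref{xcv} applied with $\psi$ replaced by $\psi+k$, each $\psi_k$ is maximal with respect to $\phi$ on the plurifine open set $\{\psi+k>\phi\}$. Fix $C_0>0$ and set $O_{C_0}:=\{\psi+C_0>\phi\}$, a plurifine open set that does not depend on~$k$. For every $k\geq C_0$ we have $O_{C_0}\subseteq\{\psi+k>\phi\}$, and hence $\psi_k$ is \emph{a fortiori} maximal with respect to $\phi$ on $O_{C_0}$. Applying Lemma \ref{lemlim} to the tail $(\psi_k)_{k\geq C_0}$, which still increases a.e.\ to $\phi_{[\psi]}$, we conclude that $\phi_{[\psi]}$ is maximal with respect to $\phi$ on $O_{C_0}$; equivalently, $\mathbf{1}_{O_{C_0}}MA(\phi_{[\psi]})$ is concentrated on $\{\phi_{[\psi]}=\phi\}$.

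To finish, let $C_0\to\infty$. The sets $O_{C_0}$ increase to $\{\psi>-\infty\}$, whose complement is the pluripolar set $\{\psi=-\infty\}$. Since the non-pluripolar Monge--Amp\`ere measure $MA(\phi_{[\psi]})$ charges no pluripolar set, $\mathbf{1}_{O_{C_0}}MA(\phi_{[\psi]})\nearrow MA(\phi_{[\psi]})$, and the limit remains concentrated on $\{\phi_{[\psi]}=\phi\}$. This gives $\phi_{[\psi]}=\phi$ a.e.\ with respect to $MA(\phi_{[\psi]})$, as required.

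The main obstacle is conceptual rather than computational: Lemma \ref{xcv} only provides maximality on a \emph{plurifine} open set that depends on the approximant, while Lemma \ref{lemlim} needs the open set to be fixed along the sequence. The device of introducing a ``buffer'' $C_0$ decouples the plurifine open set from the index $k$ and is precisely what allows the two lemmas to be combined; disposing of the residual pluripolar set $\{\psi=-\infty\}$ is then automatic from the non-pluripolar nature of $MA$.
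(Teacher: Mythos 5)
Your proof is correct and follows essentially the same route as the paper: approximate $\phi_{[\psi]}$ by the increasing family $P_{\psi+C}\phi$, use Lemma \ref{xcv} to get maximality of each approximant on $\{\psi>\phi-C\}$, observe that this passes to the smaller fixed set $\{\psi>\phi-C_0\}$ for $C\ge C_0$, apply Lemma \ref{lemlim}, and finally let $C_0\to\infty$ using that $MA(\phi_{[\psi]})$ puts no mass on the pluripolar set $\{\psi=-\infty\}$. The ``buffer'' device you highlight is exactly the observation the paper makes when it passes from $\{\psi>\phi-C\}$ to $\{\psi>\phi-C'\}$ for $C'\le C$.
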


\begin{proof}
$P_{[\psi]}\phi=\phi_{[\psi]}$ a.e., and since $P_{\psi+C}\phi$
increases to $P_{[\psi]}\phi,$ it thus increases to $\phi_{[\psi]}$
a.e.. By Lemma \ref{xcv} we get that $P_{\psi+C}\phi$ is maximal with respect to $\phi$ on the plurifine open set $\{\psi>\phi-C\}$ and thus also on any set $\{\psi>\phi-C'\}$ whenever $C'\leq C$. From Lemma \ref{lemlim} it thus follows that $\phi_{[\psi]}$ is maximal with respect to $\phi$ on the set $\{\psi>\phi-C\}$ for any $C$. Since $MA(\phi_{[\psi]})$ puts no mass on $\{\psi=-\infty\}$ the theorem follows.
\end{proof}

\begin{example}
  Consider the case that $s$ is a section of $rL$ that vanishes along
  a divisor $D$, and set $\psi = \frac{1}{r} \ln |s|^2$.  Then the
  maximal envelope $\phi_{[\psi]}$ is considered by Berman \cite[Sec.
  4]{Berman4}, and equals
$${\sup}^* \{\psi' \le \phi : \psi'\in PSH(L), \nu_D(\psi')\ge 1\}$$
where $\nu_D$ denotes the Lelong number along $D$.  This metric
governs the Bergman kernel asymptotics of sections of $kL$ for $k\gg
0$ that vanish along the divisor $D$. The more general case when $\psi$ has analytic singularities is also considered in \cite{Berman4}. 
\end{example}

The maximal property gives the following bounds on the energy
functional which will be crucial for our construction of weak
geodesics (Theorem \ref{theorem2}).

\begin{proposition} \label{key} Suppose that $\psi$ is maximal with respect to a positive metric $\phi$ with small unbounded locus, and let $t>0$. Then 
  % we have the following upper and lower bound on the Aubin-Mabuchi
  % energy of $\max\{\psi+t,\phi\}$ relative to $\phi,$
  \begin{equation} \label{ineqdre} t\int_X MA(\psi)\leq
    \mathcal{E}(\max\{\psi+t,\phi\},\phi)\leq t\int_X MA(\phi).
  \end{equation} 
\end{proposition}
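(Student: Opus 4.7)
The plan is to introduce the family of positive metrics $\eta_s:=\max\{\psi+s,\phi\}$ for $s\in[0,t]$, and to read off both inequalities from the uniform bound $\phi\le\eta_s\le\phi+s$ (a consequence of $\psi\le\phi$, which is part of the maximality hypothesis), together with the cocycle and monotonicity properties of the Aubin-Mabuchi energy and plurifine locality of the non-pluripolar product. Each $\eta_s$ is equivalent to $\phi$, and its unbounded locus is contained in that of $\phi$, so it inherits the small unbounded locus property; the tools from Section \ref{sectionaubinmabuchi} are therefore available throughout.

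The upper bound is immediate: inserting $\eta_t-\phi\le t$ into each of the $n+1$ summands defining $\mathcal{E}(\eta_t,\phi)$ reduces matters to the identity $\int_X(dd^c\eta_t)^i\wedge(dd^c\phi)^{n-i}=\int_X MA(\phi)$, which follows by applying the monotonicity Theorem \ref{monothm} twice to the equivalent pair $\eta_t\sim\phi$.

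For the lower bound, the key step is to pass through the auxiliary metric $\eta_\epsilon$ for $\epsilon\in(0,t)$ and write $\mathcal{E}(\eta_t,\phi)=\mathcal{E}(\eta_t,\eta_\epsilon)+\mathcal{E}(\eta_\epsilon,\phi)\ge \mathcal{E}(\eta_t,\eta_\epsilon)$, using the cocycle property and monotonicity of the energy (the second term being non-negative as $\eta_\epsilon\ge\phi$). On the plurifine open set $E_\epsilon:=\{\psi+\epsilon>\phi\}$ the metrics $\eta_\epsilon$ and $\eta_t$ coincide respectively with $\psi+\epsilon$ and $\psi+t$, so by plurifine locality of the non-pluripolar product every mixed expression $(dd^c\eta_t)^i\wedge(dd^c\eta_\epsilon)^{n-i}$ restricts on $E_\epsilon$ to $MA(\psi)$, while $\eta_t-\eta_\epsilon=t-\epsilon$ there. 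By Definition \ref{defmax2} the measure $MA(\psi)$ is concentrated on $\{\psi=\phi\}\subseteq E_\epsilon$, so each of the $n+1$ summands of $\mathcal{E}(\eta_t,\eta_\epsilon)$ is bounded below by $(t-\epsilon)\int_X MA(\psi)$ (discarding the non-negative contribution from $X\setminus E_\epsilon$), and letting $\epsilon\to 0$ yields the desired lower bound.

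The main delicate point is precisely the need to pass through $\eta_\epsilon$ rather than computing $\mathcal{E}(\eta_t,\phi)$ directly. Working straight from the definition one would only obtain the $i=n$ term $\tfrac{1}{n+1}t\int_X MA(\psi)$ cleanly, since the mixed products $(dd^c\psi)^i\wedge(dd^c\phi)^{n-i}$ for $i<n$ have no a priori reason to be concentrated on $\{\psi=\phi\}$. Replacing $\phi$ by $\eta_\epsilon$ uniformly collapses every one of these mixed products to $MA(\psi)$ on $E_\epsilon$ via plurifine locality, which is what allows all $n+1$ summands together to deliver the full $(t-\epsilon)\int_X MA(\psi)$; the small $\epsilon$ is forced on us because $\{\psi+s>\phi\}$ degenerates to $\emptyset$ at $s=0$.
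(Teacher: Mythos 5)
Your proof is correct and follows essentially the same route as the paper: bound $\mathcal{E}(\max\{\psi+t,\phi\},\phi)$ from above by comparison with $\phi+t$, and from below by passing through the auxiliary metric $\max\{\psi+\epsilon,\phi\}$ and invoking maximality together with $\{\psi=\phi\}\subseteq\{\psi+\epsilon>\phi\}$. The only difference is that you spell out, via plurifine locality of the non-pluripolar product, the step the paper dismisses as ``clear'' (namely that every mixed term $(dd^c\eta_t)^i\wedge(dd^c\eta_\epsilon)^{n-i}$ restricts to $MA(\psi)$ on $\{\psi+\epsilon>\phi\}$), and for the upper bound you argue summand by summand using Theorem \ref{monothm} rather than citing monotonicity of $\mathcal{E}$ directly; both are equivalent. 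One small imprecision: you write that each of the $n+1$ summands of $\mathcal{E}(\eta_t,\eta_\epsilon)$ is bounded below by $(t-\epsilon)\int_X MA(\psi)$, when with the $\tfrac{1}{n+1}$ normalization each summand contributes $\tfrac{1}{n+1}(t-\epsilon)\int_X MA(\psi)$ and the full bound arises after summing; the conclusion is unaffected.
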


\begin{proof}
  Since by assumption $\psi\leq \phi$ we know
  $\max\{\psi+t,\phi\}\leq \phi+t,$ so from the monotonicity of the
  Aubin-Mabuchi energy it follows that
  $$\mathcal{E}(\max\{\psi+t,\phi\},\phi)\leq
  \mathcal{E}(\phi+t,\phi)=t\int_X MA(\phi)$$ which gives the upper
  bound.  For the lower bound, first choose an $\epsilon$ with
  $0<\epsilon<t$. Again by monotonicity,
  \begin{equation} \label{eqdre}
    \mathcal{E}(\max\{\psi+t,\phi\},\phi)\geq
    \mathcal{E}(\max\{\psi+t,\phi\},\max\{\psi+\epsilon,\phi\}).
  \end{equation} 

Now clearly
  \begin{equation} \label{eqdre2}
    \mathcal{E}(\max\{\psi+t,\phi\},\max\{\psi+\epsilon,\phi\})\geq
    (t-\epsilon)\int_{\{\psi+\epsilon>\phi\}}MA(\psi).
  \end{equation}
  By the assumption that $\psi$ is maximal with respect to $\phi$ 
$$\int_{\{\psi=\phi\}}MA(\psi)=\int_X MA(\psi)$$ and
  since $\{\psi=\phi\}\subseteq \{\psi+\epsilon>\phi\}$, the
  combination of (\ref{eqdre}) and (\ref{eqdre2}) yields
  $$\mathcal{E}(\max\{\psi+t,\phi\},\phi)\geq (t-\epsilon)\int_X
  MA(\psi).$$ Since $\epsilon>0$ was chosen arbitrarily the lower
  bound in (\ref{ineqdre}) follows.
\end{proof}

\section{Test curves and analytic test
  configurations}\label{section:testcurves}

\begin{definition}\label{definitiontestcurve}
  A map $\lambda\mapsto \psi_{\lambda}$ from $\mathbb{R}$ to $\PSH(L)$
  is called a \emph{test curve} if there is a constant $C$ such that
  \begin{enumerate}
  \item[(i)] $\psi_{\lambda}$ is equal to some locally bounded
    positive metric $\psi_{-\infty}$ for $\lambda<-C$,
  \item[(ii)] $\psi_{\lambda}\equiv -\infty$ for $\lambda>C$,
  \item[(iii)] $\psi_{\lambda}$ has small unbounded locus whenever $\psi_{\lambda}\not \equiv -\infty,$ and
  \item[(iiii)] $\psi_{\lambda}$ is concave in $\lambda$.
  \end{enumerate}
\end{definition}

  Observe that since $\psi_{\lambda}$ is concave and constant for
  $\lambda$ sufficiently negative it is decreasing in $\lambda$.    The set of test curves forms a convex set, by letting
$$(\sum r_i\gamma_i)(\lambda):=\sum r_i\gamma_i(\lambda).$$ It is also
clear that any translate $\gamma_a(\lambda):=\gamma(\lambda-a)$ of a
test curve $\gamma$ is a new test curve.

We introduce the notation $\lambda_c$ for the critical value of a test curve defined as $$\lambda_c:=\inf\{\lambda: \psi_{\lambda}\equiv -\infty\}.$$ 
For later use we record here two continuity properties of test curves.

\begin{lemma} \label{leftcontlemma}\
  \begin{enumerate}\item 
    A test curve $\psi_{\lambda}$ is left-continuous in $\lambda$ as long as 
    $\lambda<\lambda_c$.

  \item Suppose that $\lambda<\lambda_c$ and $\lambda_k$ is a decreasing sequence that tends to $\lambda$.  Then
    \begin{equation}\label{rightcont}
{\lim}^*_{k\to \infty} \psi_{\lambda_k} = \psi_{\lambda}. 
\end{equation}
(So a a test curve is right continuous modulo taking an upper semicontinuous regularization.)
  \end{enumerate}

\end{lemma}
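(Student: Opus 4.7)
The plan is to reduce both statements to pointwise computations in $\lambda$, exploiting concavity of $\lambda\mapsto\psi_{\lambda}$ together with the small unbounded locus hypothesis (iii), and then invoking the classical fact that two psh functions agreeing almost everywhere are equal. For part (1), fix $\lambda_0<\lambda_c$ and any sequence $\mu_k\uparrow\lambda_0$. Since $\lambda\mapsto\psi_{\lambda}$ is decreasing, $\psi_{\mu_k}$ is a decreasing sequence in $\PSH(L)$, so its pointwise limit $\psi_*$ lies in $\PSH(L)$ and already satisfies $\psi_*\geq\psi_{\lambda_0}$. For the reverse inequality I choose an auxiliary $\lambda'\in(\lambda_0,\lambda_c)$ with $\psi_{\lambda'}\not\equiv-\infty$, which exists by definition of $\lambda_c$. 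Writing $\lambda_0=s_k\mu_k+(1-s_k)\lambda'$ with $s_k=(\lambda'-\lambda_0)/(\lambda'-\mu_k)\to 1$, concavity reads $\psi_{\lambda_0}\geq s_k\psi_{\mu_k}+(1-s_k)\psi_{\lambda'}$, which rearranges to
$$\psi_{\mu_k}\leq\frac{1}{s_k}\psi_{\lambda_0}-\frac{1-s_k}{s_k}\psi_{\lambda'}.$$
At any $x$ where $\psi_{\lambda'}(x)>-\infty$, the right-hand side converges to $\psi_{\lambda_0}(x)$, giving $\psi_*(x)\leq\psi_{\lambda_0}(x)$. By (iii) the set $\{\psi_{\lambda'}=-\infty\}$ is contained in a complete pluripolar set, so $\psi_*=\psi_{\lambda_0}$ outside a Lebesgue-null set; both functions being psh, they must coincide on all of $X$.

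For part (2), fix a decreasing sequence $\lambda_k\downarrow\lambda<\lambda_c$ and set $\psi^{**}:=\lim_k\psi_{\lambda_k}$, now an \emph{increasing} pointwise limit. Monotonicity gives $\psi^{**}\leq\psi_{\lambda}$ at every point. For the reverse I pick $\lambda''\in(\lambda,\lambda_c)$ with $\psi_{\lambda''}\not\equiv-\infty$; for $k$ large enough that $\lambda_k<\lambda''$, writing $\lambda_k=t_k\lambda+(1-t_k)\lambda''$ with $t_k=(\lambda''-\lambda_k)/(\lambda''-\lambda)\to 1$, concavity yields
$$\psi_{\lambda_k}\geq t_k\psi_{\lambda}+(1-t_k)\psi_{\lambda''}.$$
At $x$ where $\psi_{\lambda''}(x)>-\infty$, passing to the limit gives $\psi^{**}(x)\geq\psi_{\lambda}(x)$, so $\psi^{**}=\psi_{\lambda}$ outside a pluripolar set. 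The usc regularization $(\psi^{**})^*$ is then a psh function agreeing almost everywhere with the psh function $\psi_{\lambda}$, so they are equal on all of $X$.

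The single obstacle in either part is that the auxiliary metric $\psi_{\lambda'}$ or $\psi_{\lambda''}$ must be chosen strictly between the reference value and the critical value $\lambda_c$, and therefore may carry genuine singularities; the concavity estimate only yields the desired pointwise inequality off the singular locus of this auxiliary metric. Hypothesis (iii) is precisely what keeps this exceptional set pluripolar, and the a.e.\ determination of psh functions then removes it. The reason the upper semicontinuous regularization appears in (2) but not in (1) is simply that an increasing pointwise limit of psh functions need not be upper semicontinuous, whereas a decreasing one automatically is.
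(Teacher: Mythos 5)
Your argument is correct and follows the same route as the paper's: in each part you fix an auxiliary $\lambda'$ (resp.\ $\lambda''$) strictly between the reference value and $\lambda_c$, use the fact that the concave real function $\lambda\mapsto\psi_{\lambda}(x)$ is continuous wherever it is finite (you spell out the elementary convex-combination inequality that the paper leaves implicit), conclude pointwise agreement away from $\{\psi_{\lambda'}=-\infty\}$, and finish with the a.e.\ determination of positive metrics. The only inaccuracy is in your closing paragraph: you say hypothesis~(iii) (small unbounded locus) is what keeps the exceptional set pluripolar, but in fact $\{\psi_{\lambda'}=-\infty\}$ is automatically pluripolar for any positive metric $\psi_{\lambda'}\not\equiv-\infty$, with no need for~(iii) -- the paper indeed never invokes it here. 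This is harmless since the pluripolarity you need holds anyway, but (iii) is about the unbounded locus (where the metric fails to be locally bounded), a potentially larger set, and it plays no role in this lemma.
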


\begin{proof}
For (1), let $\lambda_k$ increase to some $\lambda<\lambda_c,$ so we need to show that $$\lim_{k\to \infty}\psi_{\lambda_k}=\psi_{\lambda}.$$ By our hypothesis there exists a $\lambda'$ such that $\lambda<\lambda'<\lambda_c,$ and thus $\psi_{\lambda'}\not \equiv -\infty$. Since $\psi_{\lambda}(x)$ is concave in $\lambda$ it is continuous for all $x$ such that $\psi_{\lambda'}(x)\neq -\infty$. Thus $\psi_{\lambda_k}$ converges to $\psi_{\lambda}$ at least away from a pluripolar set, i.e.\ a.e.\ with respect to a volume form. On the other hand we have that $\psi_{\lambda_k}$ is decreasing in $k,$ so the limit is a positive metric. Now if two positive metrics coincide a.e.\ with respect to a volume form it follows that they are equal everywhere, because this is true locally for plurisubharmonic function \cite[Corollary 2.9.8]{Klimek}.

The proof of (2) is essentially the same.  This time $\lambda_k$ is a decreasing sequence, so as $\lambda<\lambda_c$ we may as well assume that each $\lambda_k<\lambda'$ and so in particular $\psi_{\lambda_k}\not\equiv -\infty$.  Then the $\psi_{\lambda_k}$ form an increasing sequence so the left hand side of \eqref{rightcont} is a positive metric.   But for the same reason as above, the limit $\lim_{k\to \infty} \psi_{\lambda_k}$ equals $\psi_{\lambda}$ away from a pluripolar set, and thus the left and right hand side of \eqref{rightcont} agree a.e. with respect to a volume form, and thus are equal everywhere.
\end{proof}

\begin{definition}\label{definitionanalytictestconfig}
  A map $\gamma$ from $\mathbb{R}$ to $Sing(L)$ is called an
  \emph{analytic test configuration} if it is the composition of a
  test curve with the natural projection of $\PSH(L)$ to $Sing(L)$.
\end{definition}
As with the set of test curves, the set of analytic test
configurations is convex.  We now extend the definition of the
maximal envelope (Definition \ref{definitionmaximal}) to test
curves.

\begin{definition} \label{lemma4} Let $\psi_{\lambda}$ be a test curve
  and $\phi$ an element in $\mathcal{H}(L)$. The \emph{maximal
    envelope} of $\phi$ with respect to $\psi_{\lambda}$ is the map
$$\lambda \mapsto \phi_{\lambda} := \phi_{[\psi_{\lambda}]}=\usc (P_{[\psi_{\lambda}]}\phi).$$ 
\end{definition}

It is easy to see that $\phi_{\lambda}$ only depends on $\phi$ and the
analytic test configuration $[\psi_{\lambda}],$ since if
$\psi_{\lambda}'\sim \psi_{\lambda}$ we trivially have
$\phi_{[\psi_{\lambda}]}=\phi_{[\psi_{\lambda}']}$. Observe also that
since $\psi_{-\infty}$ is locally bounded, we have $\phi_{\lambda} =
\phi$ for $\lambda<-C$.

\begin{definition}
  We say that a test curve $\psi_{\lambda}$ is \emph{maximal} if for
  all $\lambda$ the metric $\psi_{\lambda}$ is maximal with respect
  to $\psi_{-\infty}$.
\end{definition}

Since $\psi_{\lambda}$ is decreasing in $\lambda,$
$$\{\psi_{\lambda'}=\psi_{\lambda}\}\supseteq
\{\psi_{\lambda'}=\psi_{-\infty}\}\quad \text{ if} \quad \lambda\leq \lambda'.$$ It
follows that if $\psi_{\lambda}$ is a maximal test curve,
$\psi_{\lambda'}$ is maximal with respect to $\psi_{\lambda}$
whenever $\lambda\leq \lambda'$.   We shall show in the next section how the Legendre transform of a maximal test curve gives a weak geodesic ray, and end this section by showing how maximal envelopes give rise to maximal test curves.

\begin{proposition}
  The maximal envelope $\phi_{\lambda}$ is a maximal test curve.
\end{proposition}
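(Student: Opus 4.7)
The plan is to check the four axioms of Definition \ref{definitiontestcurve} in turn and then appeal to Theorem \ref{lemma3} for the maximality statement.

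For axiom (i), since $\psi_{-\infty}$ is locally bounded on the compact manifold $X$ it is globally bounded, hence $\psi_{-\infty}\sim \phi$, so $\phi$ itself is an admissible competitor in the envelope defining $P_{[\psi_{-\infty}]}\phi$ and $\phi_\lambda=\phi$ for $\lambda<-C$ (as already noted after Definition \ref{lemma4}). Axiom (ii) follows because when $\psi_\lambda\equiv-\infty$, the only metric $\sim\psi_\lambda$ is $-\infty$ itself, forcing $\phi_\lambda\equiv-\infty$. For axiom (iii), whenever $\phi_\lambda\not\equiv-\infty$ one also has $\psi_\lambda\not\equiv-\infty$; choosing $C_0$ large enough that $\psi_\lambda-C_0\le\phi$ gives $\phi_\lambda\ge\psi_\lambda-C_0$ while $\phi_\lambda\le\phi$, so the unbounded locus of $\phi_\lambda$ is contained in that of $\psi_\lambda$, which by hypothesis is contained in a closed complete pluripolar set.

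The real content is concavity, axiom (iiii). I would argue in two stages. First, I show that the (un-regularized) envelope $\lambda\mapsto P_{[\psi_\lambda]}\phi$ is pointwise concave. Fix $\lambda_1,\lambda_2$ and $t\in[0,1]$, set $\lambda_t=t\lambda_1+(1-t)\lambda_2$, and take admissible competitors $\psi'_i\le\min(\phi,\psi_{\lambda_i}+C_i)$ in $\mathrm{PSH}(L)$ for the sub-envelopes $P_{\psi_{\lambda_i}+C_i}\phi$. Using concavity of $\psi_\lambda$ in $\lambda$,
$$
t\psi'_1+(1-t)\psi'_2 \le \min\bigl(\phi,\psi_{\lambda_t}+C\bigr),\qquad C:=tC_1+(1-t)C_2,
$$
so the convex combination is admissible for $P_{\psi_{\lambda_t}+C}\phi\le P_{[\psi_{\lambda_t}]}\phi$. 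Taking suprema over $\psi'_1,\psi'_2$ and then letting $C_1,C_2\to\infty$ (the envelopes are monotone increasing in the constants, with pointwise limits $P_{[\psi_{\lambda_i}]}\phi$) gives
$$
tP_{[\psi_{\lambda_1}]}\phi+(1-t)P_{[\psi_{\lambda_2}]}\phi \le P_{[\psi_{\lambda_t}]}\phi
$$
pointwise. Second, I pass to upper semicontinuous regularizations. Outside the pluripolar set where $(P_{[\psi_{\lambda}]}\phi)^*>P_{[\psi_\lambda]}\phi$ for $\lambda\in\{\lambda_1,\lambda_2,\lambda_t\}$, the analogue for $\phi_\lambda$ holds. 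Both sides of $\phi_{\lambda_t}\ge t\phi_{\lambda_1}+(1-t)\phi_{\lambda_2}$ are upper semicontinuous (as convex combinations of PSH functions are PSH), and pluripolar sets have empty interior, so the inequality extends to all of $X$ by usc.

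Finally, the maximality of $\phi_\lambda$ with respect to $\phi_{-\infty}=\phi$ is, for each fixed $\lambda$, precisely the assertion of Theorem \ref{lemma3}. The main obstacle I expect is the passage from pointwise concavity of $P_{[\psi_\lambda]}\phi$ to concavity of its usc regularization $\phi_\lambda$: usc-regularization does not preserve pointwise inequalities in general, and one has to exploit the fact that the exceptional set is pluripolar (hence has empty interior) in order to propagate the inequality from outside this set to the whole of $X$.
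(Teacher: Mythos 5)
Your proof is correct and follows essentially the same route as the paper's: establish concavity of the partial envelopes $P_{\psi_\lambda + C}\phi$ from concavity of $\psi_\lambda$, pass to the increasing limit in $C$, then pass to the upper semicontinuous regularization, and invoke Theorem~\ref{lemma3} for the maximality of each $\phi_\lambda$. The treatment of axioms (i)--(iii), which the paper dismisses as ``immediate,'' is also fine.

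One small point deserves tightening. In the final step you say that since the exceptional set (where $(P_{[\psi_\lambda]}\phi)^* > P_{[\psi_\lambda]}\phi$) is pluripolar, hence has \emph{empty interior}, and both sides are upper semicontinuous, the inequality $\phi_{\lambda_t}\ge t\phi_{\lambda_1}+(1-t)\phi_{\lambda_2}$ propagates to all of $X$. Density of the good set plus upper semicontinuity of both sides is not sufficient in general (think of $u\equiv 0$ versus $v=\mathbf 1_{\{0\}}$: both usc, $u\ge v$ off a closed nowhere dense set, but $u(0)<v(0)$). What you actually need is that pluripolar sets have \emph{Lebesgue measure zero}, so the inequality holds a.e.; since the right-hand side is a positive metric it is determined by its values almost everywhere via the sub-mean-value property, so the a.e.\ inequality promotes to an inequality everywhere. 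This is the standard mechanism behind the paper's terse appeal to ``monotonicity of the upper semicontinuous regularization,'' and with this replacement your argument is complete.
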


\begin{proof}

  We first show it is a test curve.  Pick a real number $C$. Let
  $\lambda$ and $\lambda'$ be two real numbers, and let $0\leq t\leq
  1$. By the concavity of $\psi_{\lambda}$,
  $$tP_{\psi_{\lambda}+C}\phi+(1-t)P_{\psi_{\lambda'}+C}\phi\leq
  t\psi_{\lambda}+(1-t)\psi_{\lambda'}+C\leq
  \psi_{t\lambda+(1-t)\lambda'}+C.$$ Thus from the definition of the
  projection operator,
  $$tP_{\psi_{\lambda}+C}\phi+(1-t)P_{\psi_{\lambda'}+C}\phi\leq
  P_{\psi_{t\lambda+(1-t)\lambda'}+C}\phi,$$ which means that
  $P_{\psi_{\lambda}+C}\phi$ is concave in $\lambda$ for all $C$.
  Since $P_{\psi_{\lambda}+C}\phi$ increases to
  $P_{[\psi_{\lambda}]}\phi,$ and an increasing sequence of concave
  functions is concave, we get that $P_{[\psi_{\lambda}]}\phi$ is
  concave, and because of the monotonicity of the upper semicontinuous
  regularization it follows that
  $\usc(P_{[\psi_{\lambda}]}\phi)=\phi_{\lambda}$ also is concave. The
  other properties of a test curve are immediate.

  Clearly $\phi_{-\infty}=\phi,$ so that $\phi_{\lambda}$ is maximal
  follows from Theorem \ref{lemma3}.
\end{proof}

\section{The Legendre transform and geodesic
  rays}\label{sectionlegendre}

If $f$ is a convex function in the real variable $\lambda,$ the set of
subgradients of $f,$ denoted by $\Delta_f,$ is the set of $t\in
\mathbb{R}$ such that $f(\lambda)-t\lambda$ is bounded from below. If
$f$ happens to be differentiable, then the set subgradients
coincides with the image of the derivative of $f$. By convexity of
$f$, the set of subgradients is convex, i.e.\ an interval. Recall
that the Legendre transform of $f,$ here denoted by $\widehat{f}$, is
the function on $\Delta_f$ defined as
$$\widehat{f}(t):=\sup_{\lambda}\{t\lambda-f(\lambda)\}.$$ Since
$\widehat{f}$ is defined as the supremum of the linear functions
$t\lambda-f(\lambda),$ it follows that $\widehat{f}$ is convex.

If $f$ is concave then of course $-f$ is convex,
and one can define the Legendre transform of $f,$ also denoted by
$\widehat{f},$ as the Legendre transform of $-f,$ i.e.
$$\widehat{f}(t):=\sup_{\lambda}\{f(\lambda)+t\lambda\},$$ which is
thus convex.

\begin{definition}
  The \emph{Legendre transform} of a test curve $\psi_{\lambda}$ is 
  $$\widehat{\psi}_t:={\sup}^*_{\lambda\in
    \mathbb{R}}\{\psi_{\lambda}+t\lambda\},$$ where
  $t\in[0,\infty)$.
\end{definition}
Recall that the star denotes the operation of  taking the upper
semicontinuous regularization.

\begin{lemma} \label{lemma8} Let $\psi_{\lambda}$ be any test curve
  (not necessarily maximal).  Then the Legendre transform
  $\widehat{\psi}_t$ is locally bounded for all $t$, and the map
  $t\mapsto \widehat{\psi}_t$ is a subgeodesic ray emanating from
  $\psi_{-\infty}$.
\end{lemma}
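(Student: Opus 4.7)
The plan is to first establish the quantitative two-sided bounds
$$\psi_{-\infty} - tC \;\leq\; \widehat{\psi}_t \;\leq\; \psi_{-\infty} + tC \qquad (t\geq 0),$$
from which both the local boundedness of $\widehat{\psi}_t$ and the identity $\widehat{\psi}_0 = \psi_{-\infty}$ (so that the curve emanates from $\psi_{-\infty}$) are immediate. Both bounds follow directly from the structure of a test curve: the upper bound uses that $\psi_\lambda \leq \psi_{-\infty}$ for every $\lambda$ (by monotonicity) together with $\psi_\lambda \equiv -\infty$ for $\lambda > C$, so the relevant supremum is effectively over $\lambda \leq C$; the lower bound is obtained by testing with $\lambda = -C$, at which $\psi_{-C} = \psi_{-\infty}$ is locally bounded.

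To check the subgeodesic property, I would work on the region $X \times A$ with $A := \{|w| \geq 1\}$ and introduce, for each $\lambda\in\mathbb{R}$, the auxiliary rotation-invariant metric
$$\Psi_\lambda(x,w) \;:=\; \psi_\lambda(x) + \lambda \log|w|$$
on $\pi^*L$. Since $\log|w|$ is pluriharmonic on $A$, each $\Psi_\lambda$ is a positive singular metric whose curvature equals the pullback of $dd^c\psi_\lambda$. Setting
$$\Phi \;:=\; \bigl(\sup_{\lambda\in\mathbb{R}} \Psi_\lambda\bigr)^*,$$
with the upper-semicontinuous regularization taken on all of $X \times A$, the bounds from the first step (applied to $\Psi_\lambda$ in place of $\psi_\lambda+t\lambda$) give the global majorization $\Phi(x,w) \leq \psi_{-\infty}(x)+C\log|w|$, and the standard theory of suprema of plurisubharmonic functions then yields that $\Phi$ is a locally bounded, rotation-invariant positive singular metric on $\pi^*L$.

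It remains to verify that the slice of $\Phi$ at $\{|w|=e^t\}$ coincides with $\widehat{\psi}_t(x)$. The inequality $\Phi(\cdot, e^t) \geq \widehat{\psi}_t$ is immediate, since $\Phi(\cdot,e^t)$ is upper semicontinuous in $x$ and dominates $\sup_\lambda(\psi_\lambda + t\lambda)$. For the reverse, I intend to exploit the convexity of the unregularized sup $F(x,w) := \sup_\lambda(\psi_\lambda(x) + \lambda\log|w|)$ in the variable $\log|w|$ (being a supremum of affine functions of $\log|w|$), combined with the same uniform two-sided bound, which constrains the slope of $F(x,\cdot)$ in $\log|w|$ uniformly in $x$. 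This should let one replace an arbitrary approximating sequence $(x_n,w_n)\to (x_0,w_0)$ in the limsup defining $\Phi(x_0,w_0)$ by one living on the slice $\{|w|=|w_0|\}$, thereby identifying the global usc regularization with the slice usc regularization $\widehat{\psi}_{\log|w_0|}$. Reconciling the two regularizations is the step where I expect the main technical care to be required; granting it, $\Phi$ realizes the rotation-invariant extension demanded in the definition of a subgeodesic with $\phi_t = \widehat{\psi}_t$, which together with $\widehat{\psi}_0 = \psi_{-\infty}$ completes the proof.
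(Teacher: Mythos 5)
Your argument is essentially the one in the paper: both establish the uniform Lipschitz bound in $t$ (equivalently, in $\log|w|$) coming from the linear boundedness of the $\lambda$-range, and use it to show that the fiberwise and product-space upper semicontinuous regularizations of $\sup_\lambda\{\psi_\lambda + t\lambda\}$ agree, so that the regularized envelope of the elementary subgeodesics $\psi_\lambda + \lambda\log|w|$ is itself a subgeodesic with the prescribed slices. The reconciliation step you flagged as requiring ``main technical care'' is precisely what the paper disposes of by observing that a function upper semicontinuous in $x$ for each fixed $t$ and uniformly Lipschitz in $t$ is upper semicontinuous on the product, so your sketch does close.
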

\begin{proof}
  By assumption, for some $\lambda,$ $\psi_{\lambda}$ is locally
  bounded, and trivially $\widehat{\psi}_t\geq
  \psi_{\lambda}+t\lambda,$ thus $\widehat{\psi}_t$ is locally
  bounded.  It is clear that for a fixed $\lambda,$ the curve
  $\psi_{\lambda}+t\lambda$ is a subgeodesic. Clearly $\sup_{\lambda\in
    \mathbb{R}}\{\psi_{\lambda}+t\lambda\}$ is convex and Lipschitz in $t,$ and the same is easily seen to hold for $\widehat{\psi}_t$. Thus $\widehat{\psi}_t$ is upper semicontinuous in the directions in $X$ and also Lipschitz in $t,$ which implies that it is upper semicontinuous on the product $X\times \mathbb R_{\ge 0}$ . Therefore $\widehat{\psi}_t$ (thought of as a function on the product) coincides with the upper semicontinuous regularization of of $\sup_{\lambda\in
    \mathbb{R}}\{\psi_{\lambda}+t\lambda\}$.  

Now, taking the upper
  semicontinuous regularization of the supremum of subgeodesics yields
  a subgeodesic, as long as it is bounded from above. We observed
  above that $\psi_{\lambda}\leq \psi_{-\infty}$. Now for some
  constant $C$, $\psi_{C}\equiv -\infty$. It follows that
  $\widehat{\psi}_t\leq \psi_{-\infty}+tC,$ so it is bounded from
  above and thus it is a subgeodesic.

  Finally by definition
  $\widehat{\psi}_0={\sup}_{\lambda}^*\{\psi_{\lambda}\},$ which clearly
  is equal to $\psi_{-\infty}$ since $\psi_{\lambda}\leq
  \psi_{-\infty}$ ($\psi_{\lambda}$ being decreasing in $\lambda$) and
  $\psi_{-\infty}$ itself being upper-semicontinuous.
\end{proof}

One can also consider the inverse Legendre transform, going from subgeodesic rays to concave curves of positive metrics. 

\begin{definition}
The Legendre transform of a subgeodesic ray $\phi_t,$ $t\in [0,\infty),$ denoted by $\widehat{\phi}_{\lambda},$ $\lambda\in \mathbb{R},$ is defined as $$\widehat{\phi}_{\lambda}:=\inf_{t\in [0,\infty)}\{\phi_t-t\lambda\}.$$
\end{definition} 

\begin{remark}\label{rmk:Berndtsson}
  It follows from Kiselman's minimum principle (see \cite{Kiselman}) that for any $\lambda\in \mathbb{R},$ $\widehat{\phi}_{\lambda}$ is a positive metric (we would like to thank Bo Berndtsson for this observation). Furthermore it is clear that $\widehat{\phi}_{\lambda}$ is concave and decreasing in $\lambda$. From the involution property of the (real) Legendre transform it follows that the Legendre transform of $\widehat{\phi}_{\lambda}$ is $\phi_t,$ thus any subgeodesic ray is the Legendre transform of a concave curve of positive metrics.
\end{remark}

The goal of this section is to prove that if $\psi_{\lambda}$ is an
\emph{maximal} test curve then the Legendre transform
$\widehat{\psi}_t$ of $\psi_{\lambda}$ is a weak geodesic ray
emanating from $\psi_{-\infty}$. By Lemma \ref{lemma8} we know $\widehat{\psi}_t$ is a subgeodesic ray emanating from $\psi_{-\infty}$. What remains then is to show that if $\psi_{\lambda}$ is maximal then the Aubin-Mabuchi energy $\mathcal{E}(\widehat{\psi}_t,\widehat{\psi}_0)$ is linear in $t$, which we now do with an approximation argument. \medskip

For $N\in \mathbb{N}$ consider the approximation
$\widehat{\psi}_t^{N}$ to $\widehat{\psi}_t,$ given by
$$\widehat{\psi}_t^{N}:=\sup_{k\in
  \mathbb{Z}}\{\psi_{k2^{-N}}+tk2^{-N}\}.$$ Since $\psi_{\lambda}$ is
concave it is continuous in $\lambda$ at all points such that
$\psi_{\lambda}(x)>-\infty$. From the continuity it follows that
$\widehat{\psi}_t^{N}$ will increase pointwise to $\widehat{\psi}_t$
a.e.\ as $N$ tends to infinity.  Also let $\widehat{\psi}_t^{N,M}$
denote the curve $$\widehat{\psi}_t^{N,M}:=\sup_{k\in \mathbb{Z},
  k\leq M}\{\psi_{k2^{-N}}+tk2^{-N}\},$$ 
and observe that $\widehat{\psi}_t^N$ and $\widehat{\psi}_t^{N,M}$ are all locally
bounded.

\begin{lemma} \label{lemma5} Let $M<M'$ be two integers. Then
$$\widehat{\psi}_t^{N,M'}= \psi_{M'2^{-N}}+tM'2^{-N}$$ implies that $$\widehat{\psi}_t^{N,M}= \psi_{M2^{-N}}+tM2^{-N}.$$
\end{lemma}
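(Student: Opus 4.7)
The plan is to exploit that $\lambda \mapsto \psi_\lambda(x)$ is concave at each $x$ (by the definition of a test curve), which makes the auxiliary function $\lambda \mapsto \psi_\lambda(x) + t\lambda$ concave in $\lambda$ for any fixed $t \ge 0$. The statement then reduces to an elementary fact about concave functions on a discrete set: if such a function attains its supremum on $\{k 2^{-N} : k \in \mathbb{Z}, k \le M'\}$ at the rightmost point, then it is non-decreasing along the set, and hence its supremum over any left-truncation $\{k 2^{-N} : k \in \mathbb{Z}, k \le M\}$ is again attained at the rightmost point.

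First, I would unpack the hypothesis pointwise in $x$: the equation $\widehat{\psi}_t^{N,M'}(x) = \psi_{M' 2^{-N}}(x) + t M' 2^{-N}$ is exactly the system of inequalities
$$\psi_{k 2^{-N}}(x) - \psi_{M' 2^{-N}}(x) \le t(M'-k) 2^{-N} \quad \text{for all } k \in \mathbb{Z},\ k \le M'.$$
Next, I would apply the three-slope inequality coming from concavity: for integers $k < M < M'$, writing $M 2^{-N}$ as a convex combination of $k 2^{-N}$ and $M' 2^{-N}$ gives
$$\psi_{M 2^{-N}}(x) \ge \frac{M'-M}{M'-k}\,\psi_{k 2^{-N}}(x) + \frac{M-k}{M'-k}\,\psi_{M' 2^{-N}}(x).$$
Substituting the hypothesis into the second summand and simplifying yields $\psi_{k 2^{-N}}(x) + t k 2^{-N} \le \psi_{M 2^{-N}}(x) + t M 2^{-N}$ for all $k < M$; the case $k = M$ is trivial. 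Taking the supremum over $k \le M$ produces exactly $\widehat{\psi}_t^{N,M}(x) = \psi_{M 2^{-N}}(x) + t M 2^{-N}$.

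The only care needed concerns points $x$ at which some $\psi_{k 2^{-N}}(x) = -\infty$. The hypothesis implicitly forces $\psi_{M' 2^{-N}}(x)$ to be finite, since $\widehat{\psi}_t^{N,M'}(x) \ge \psi_{-\infty}(x)$ and $\psi_{-\infty}$ is locally bounded, and monotonicity of $\psi_\lambda$ in $\lambda$ then makes $\psi_{M 2^{-N}}(x)$ finite as well, so the concavity chord inequality makes sense. If instead $\psi_{k 2^{-N}}(x) = -\infty$ at some smaller index, the desired inequality at that index is trivially satisfied. I anticipate no genuine obstacle: the lemma is a soft consequence of concavity in $\lambda$, and the whole argument is pointwise in $x$.
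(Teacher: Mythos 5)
Your proof is correct and rests on the same essential observation as the paper's: concavity of $\lambda\mapsto\psi_\lambda(x)+t\lambda$ at each fixed $x$. The paper argues by contradiction (if the truncated sup at level $M$ strictly exceeded $f(M2^{-N})$ then concavity would force $f(M'2^{-N})<f(M2^{-N})$, contradicting the hypothesis), whereas you give the equivalent direct computation via the three-slope inequality; the edge-case handling of $-\infty$ values you add is fine and implicit in the paper's argument.
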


\begin{proof}
  Certainly $f(\lambda):=\psi_{\lambda}(x)+t\lambda$ is concave in
  $\lambda$. If $$\widehat{\psi}_t^{N,M}> \psi_{M2^{-N}}+tM2^{-N}$$ at
  $x,$ then $f$ would be strictly decreasing at $\lambda=M2^{-N},$ so
  by concavity we would get that
  $f(M'2^{-N})<f(M2^{-N})<\widehat{\psi}_t^{N,M}(x),$ which would be a
  contradiction.
\end{proof}

\begin{lemma} \label{lemma6} If $\psi_{\lambda}$ is a maximal test
  curve then
  \begin{equation*}
    t2^{-N}\int_X MA(\psi_{(M+1)2^{-N}})\leq \mathcal{E}(\widehat{\psi}_t^{N,M+1},\widehat{\psi}_t^{N,M})\leq t2^{-N}\int_X MA(\psi_{M2^{-N}}).
  \end{equation*}
\end{lemma}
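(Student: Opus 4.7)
The plan is to combine Proposition \ref{key} (for the lower bound) with a direct calculation using the Aubin-Mabuchi energy formula together with Lemma \ref{lemma5} (for the upper bound). Throughout I set $\lambda = M2^{-N}$, $\lambda' = (M{+}1)2^{-N}$, $a = t2^{-N}$, and write $\phi := \widehat{\psi}_t^{N,M}$, $u := \widehat{\psi}_t^{N,M+1}$, $\tilde{\xi} := \psi_{\lambda'} + t\lambda$, $\tilde{\eta} := \psi_{\lambda} + t\lambda$, so that $u = \max\{\phi,\tilde{\xi}+a\}$ and $\tilde\xi \le \tilde\eta \le \phi$.

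The essential preliminary step is to show that $\tilde\xi$ is maximal with respect to $\phi$. By maximality of the test curve, $MA(\tilde\xi) = MA(\psi_{\lambda'})$ is concentrated on $\{\psi_{\lambda'} = \psi_{-\infty}\}$. On that set $\psi_\mu \leq \psi_{-\infty}$ holds by definition of a test curve and $\psi_\mu \geq \psi_{\lambda'} = \psi_{-\infty}$ for all $\mu \le \lambda'$ since $\psi_\mu$ is decreasing; hence $\psi_{k2^{-N}} = \psi_{-\infty}$ for every $k \leq M+1$, from which $\phi = \sup_{k\le M}\{\psi_{-\infty}+tk2^{-N}\} = \psi_{-\infty} + t\lambda = \tilde\xi$ (the supremum being attained at $k=M$ because $t>0$). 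Thus $MA(\tilde\xi)$ is carried by $\{\tilde\xi = \phi\}$, which is exactly maximality.

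Given this, the lower bound follows at once from Proposition \ref{key} — which applies because $\phi$ is locally bounded and $\tilde\xi$ has small unbounded locus — yielding
$$a\int_X MA(\tilde\xi) \le \mathcal{E}(\max\{\tilde\xi+a,\phi\},\phi) = \mathcal{E}(u,\phi),$$
and $MA(\tilde\xi) = MA(\psi_{(M+1)2^{-N}})$ by translation invariance of the non-pluripolar product.

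For the upper bound I work on the plurifine open set $O := \{u > \phi\}$. On $O$ the supremum in the definition of $u$ cannot be attained at any $k \le M$, so it is attained at $k=M+1$ and $u = \tilde\xi + a$ there; applying Lemma \ref{lemma5} with $M' = M+1$ then forces $\phi = \tilde\eta$ on $O$. Hence $u-\phi = \psi_{\lambda'} - \psi_\lambda + a \le a$ on $O$, and plurifine locality of the non-pluripolar product gives $\mathbf{1}_O(dd^c u)^i\wedge(dd^c\phi)^{n-i} = \mathbf{1}_O(dd^c\psi_{\lambda'})^i\wedge(dd^c\psi_\lambda)^{n-i}$. Substituting into the Aubin-Mabuchi formula,
$$\mathcal{E}(u,\phi) = \frac{1}{n+1}\sum_{i=0}^n\int_O (u-\phi)(dd^c u)^i\wedge(dd^c\phi)^{n-i} \le \frac{a}{n+1}\sum_{i=0}^n\int_X (dd^c\psi_{\lambda'})^i\wedge(dd^c\psi_\lambda)^{n-i},$$
and applying Theorem \ref{monothm} (which, since $\psi_{\lambda'}$ is more singular than $\psi_\lambda$, bounds each mixed mass by $\int_X MA(\psi_\lambda)$) delivers the upper bound $a \int_X MA(\psi_{M2^{-N}})$. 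The main technical obstacle is the verification that $\tilde\xi$ is maximal with respect to $\phi$; once this is established, the two bounds are straightforward consequences of Proposition \ref{key}, Lemma \ref{lemma5}, plurifine locality, and Theorem \ref{monothm}.
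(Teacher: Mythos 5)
Your proof is correct, but it takes a genuinely different route from the paper's. Writing $\lambda=M2^{-N}$, $\lambda'=(M+1)2^{-N}$, $a=t2^{-N}$, $\phi=\widehat{\psi}_t^{N,M}$, $u=\widehat{\psi}_t^{N,M+1}$ as you do, the paper invokes Lemma \ref{lemma5} together with the plurifine-locality Lemma \ref{lemmavfr} to identify $\mathcal{E}(u,\phi)$ with the simpler energy $\mathcal{E}(\max\{\psi_{\lambda},\psi_{\lambda'}+a\},\psi_{\lambda})$, and then obtains \emph{both} bounds from a single application of Proposition \ref{key} to the pair $(\psi_{\lambda'},\psi_{\lambda})$. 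You instead prove the two bounds separately. For the lower bound you establish the nontrivial preliminary fact that $\tilde\xi=\psi_{\lambda'}+t\lambda$ is maximal with respect to $\phi$ (using the concentration of $MA(\psi_{\lambda'})$ on $\{\psi_{\lambda'}=\psi_{-\infty}\}$ and the explicit form of $\phi$ there), and then apply Proposition \ref{key} directly to $(\tilde\xi,\phi)$. This is the right pair for the lower bound, but note that the upper bound Proposition \ref{key} supplies for this pair is only $a\int_X MA(\phi)=a\int_X c_1(L)^n$, weaker than the claimed $a\int_X MA(\psi_{\lambda})$ -- so a separate argument for the upper bound is indeed forced. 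You give one by working directly on $O=\{u>\phi\}$: Lemma \ref{lemma5} identifies $u$ and $\phi$ there with translates of $\psi_{\lambda'}$ and $\psi_{\lambda}$, so plurifine locality converts the mixed Monge-Amp\`ere terms, the pointwise bound $u-\phi\le a$ on $O$ applies, and Theorem \ref{monothm} controls each mixed mass by $\int_X MA(\psi_{\lambda})$. Your route is longer and requires the extra maximality verification, but it is more self-contained and explicit: in particular it bypasses Lemma \ref{lemmavfr}, whose hypothesis $\{\psi_1'=\psi_2'\}=\{\psi_1=\psi_2\}$ is not immediate in the paper's application (one only sees directly that $\{u>\phi\}$ is \emph{contained in} $\{\psi_{\lambda'}+a>\psi_{\lambda}\}$, not that the two sets coincide), and it makes transparent which sets carry the relevant Monge-Amp\`ere mass.
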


\begin{proof}
  By Lemma \ref{lemma5} it follows that
  $\widehat{\psi}_t^{N,M}=\psi_{M2^{-N}}+tM2^{-N}$ on the support of
  $\widehat{\psi}_t^{N,M+1}-\widehat{\psi}_t^{N,M}$ and thus Lemma
  \ref{lemmavfr} yields
  \begin{equation} \label{equation1}
    \mathcal{E}(\widehat{\psi}_t^{N,M+1},\widehat{\psi}_t^{N,M})=\mathcal{E}(\max\{\psi_{M2^{-N}},\psi_{(M+1)2^{-N}}+t2^{-N}\},\psi_{M2^{-N}}).
  \end{equation} 
  Since we assumed that $\psi_{\lambda}$ is maximal,
  $\psi_{(M+1)2^{-N}}$ is maximal with respect to $\psi_{M2^{-N}},$
  and thus the lemma follows immediately from Lemma \ref{key}.
\end{proof}

Let $\psi_{\lambda}$ be a maximal test curve, and let $F(\lambda)$
denote the function $$F(\lambda):=\int_X MA(\psi_{\lambda}).$$ Whenever $\lambda<\lambda',$ $\psi_{\lambda'}\leq \psi_{\lambda}$ and therefore it follows from Theorem \ref{monothm} that $F(\lambda)$ is decreasing in $\lambda,$ hence $F(\lambda)$ is Riemann integrable.

\begin{proposition} \label{lemma9} If $\psi_{\lambda}$ is a maximal
  test curve then
  \begin{equation} \label{ohyes}
    \mathcal{E}(\widehat{\psi}_t,\widehat{\psi}_0)=-t\int_{\lambda=-\infty}^{\infty}\lambda
    dF(\lambda).
  \end{equation}
\end{proposition}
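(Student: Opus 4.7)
The plan is to evaluate $\mathcal{E}(\widehat{\psi}_t,\widehat{\psi}_0)$ by passing to the limit in the dyadic approximations $\widehat{\psi}_t^{N,M}$ and $\widehat{\psi}_t^{N}$, using the bounds of Lemma \ref{lemma6} to recognize the answer as a Riemann integral, and then converting this integral to the desired one via a single integration by parts. Fix $a<-C$ and for each $N$ let $M_0=M_0(N),M_1=M_1(N)$ be integers with $M_0 2^{-N}\to a$ and $M_1 2^{-N}>\lambda_c$. For $M\le M_0$ the term $\psi_{M 2^{-N}}$ equals $\psi_{-\infty}$, so $\widehat{\psi}_t^{N,M_0}=\psi_{-\infty}+tM_0 2^{-N}$; for $M\ge M_1$ the term $\psi_{M 2^{-N}}$ is identically $-\infty$, so $\widehat{\psi}_t^{N,M_1}=\widehat{\psi}_t^{N}$. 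The metrics $\widehat{\psi}_t^{N,M}$ for $M_0\le M\le M_1$ form an increasing chain of locally bounded, mutually equivalent metrics (all pinched between $\psi_{-\infty}+tM_0 2^{-N}$ and $\psi_{-\infty}+tC$ up to bounded corrections), so the cocycle property of $\mathcal{E}$ telescopes:
\[
\mathcal{E}\bigl(\widehat{\psi}_t^{N,M_1},\widehat{\psi}_t^{N,M_0}\bigr)=\sum_{M=M_0}^{M_1-1}\mathcal{E}\bigl(\widehat{\psi}_t^{N,M+1},\widehat{\psi}_t^{N,M}\bigr).
\]

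Applying Lemma \ref{lemma6} to each term sandwiches this sum between $t2^{-N}\sum_{M=M_0+1}^{M_1}F(M 2^{-N})$ and $t2^{-N}\sum_{M=M_0}^{M_1-1}F(M 2^{-N})$, which are precisely the right- and left-endpoint Riemann sums at mesh $2^{-N}$ for $t\int_a^{b}F(\lambda)\,d\lambda$, where $b=M_1 2^{-N}$. Since $F$ is bounded and monotone, both sums converge to $t\int_a^{\infty}F(\lambda)\,d\lambda$ as $N\to\infty$ (using $F\equiv 0$ past $\lambda_c$). Applying the cocycle property once more rewrites the left-hand side of the telescoping identity as $\mathcal{E}(\widehat{\psi}_t^N,\psi_{-\infty})-tM_0 2^{-N}V$, where $V:=\int_X MA(\psi_{-\infty})$. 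Since $\widehat{\psi}_t^N\nearrow\widehat{\psi}_t$ a.e.\ through locally bounded metrics equivalent to $\widehat{\psi}_0=\psi_{-\infty}$, applying the increasing case of Theorem \ref{thmconv} to every mixed term in the expansion of $\mathcal{E}$ yields $\mathcal{E}(\widehat{\psi}_t^N,\psi_{-\infty})\to\mathcal{E}(\widehat{\psi}_t,\widehat{\psi}_0)$; together with $M_0 2^{-N}\to a$, passage to the limit produces
\[
\mathcal{E}(\widehat{\psi}_t,\widehat{\psi}_0)-taV=t\int_a^{\infty}F(\lambda)\,d\lambda.
\]

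Finally, the monotonicity of $F$ together with its stabilization at $V$ below $-C$ and at $0$ above $\lambda_c$ makes $-dF$ a finite non-negative measure supported in $[-C,\lambda_c]$; integration by parts on $[a,b]$ for any $b>\lambda_c$ gives $\int_a^{\infty}F(\lambda)\,d\lambda=-aV-\int_{-\infty}^{\infty}\lambda\,dF(\lambda)$, and substituting cancels the $taV$ term to deliver the claimed formula. The most delicate step in the argument is justifying the limit $\mathcal{E}(\widehat{\psi}_t^N,\psi_{-\infty})\to\mathcal{E}(\widehat{\psi}_t,\widehat{\psi}_0)$: because the convergence $\widehat{\psi}_t^N\nearrow\widehat{\psi}_t$ is only a.e.\ rather than everywhere, one must apply the almost-everywhere increasing version of Theorem \ref{thmconv} to each of the $n+1$ mixed terms in the explicit expansion of the Aubin--Mabuchi energy, which in turn requires keeping all the approximants locally bounded and mutually equivalent throughout.
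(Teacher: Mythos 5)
Your proof is correct and follows essentially the same route as the paper's: telescope $\mathcal{E}(\widehat{\psi}_t^N,\widehat{\psi}_t^{N,M_0})$ via the cocycle property, sandwich each increment with Lemma \ref{lemma6}, recognize the Riemann sums for $t\int F\,d\lambda$, pass to the limit in $N$ using the a.e.-increasing version of Theorem \ref{thmconv}, and finish by integration by parts. The only cosmetic difference is that the paper fixes a single integer $m$ with $\psi_m=\psi_{-\infty}$ and sets $M=m2^N$, whereas you work with a real $a<-C$ approximated by $M_0 2^{-N}$; this changes nothing substantive.
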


\begin{proof}
  Suppose first $m\in \mathbb{Z}$ is such that
  $\psi_{m}=\psi_{-\infty}$. For a given $N\in \mathbb N$ set
  $M=m2^N$. Then
  $$\widehat{\psi}_t^{N,M}=\psi_{-\infty}+tm=\widehat{\psi}_0+tm.$$

  By repeatedly using the cocycle property of the Aubin-Mabuchi energy
  in combination with Lemma \ref{lemma6} we get that
  \begin{equation} \label{eqwer} t\sum_{k> M}2^{-N}F((k+1)2^{-N})\leq
    \mathcal{E}(\widehat{\psi}_t^N,\widehat{\psi}_t^{N,M})\leq
    t\sum_{k>M}2^{-N}F(k2^{-N}).
  \end{equation}
  We noted above that $\widehat{\psi}_t^{N}$ increases pointwise to
  $\widehat{\psi}_t$ a.e.\ as $N$ tends to infinity. By the continuity
  of the Aubin-Mabuchi energy under a.e.\ pointwise increasing
  sequences \eqref{lemmabyt},
  $$\mathcal{E}(\widehat{\psi}_t,\widehat{\psi}_0+tm)=t\int_{\lambda=m}^{\infty}\lambda
  F(\lambda)d\lambda,$$ since both the left- and the right-hand side
  of (\ref{eqwer}) converges to this. Again using the cocycle property
  we get that
  \begin{eqnarray}
    \mathcal{E}(\widehat{\psi}_t,\widehat{\psi}_0)=\mathcal{E}(\widehat{\psi}_t,\widehat{\psi}_0+tm)+\mathcal{E}(\widehat{\psi}_0+tm,\widehat{\psi}_0)= \nonumber \\=t\int_{\lambda=m}^{\infty}\lambda F(\lambda)d\lambda+tm\int_X MA(\psi_{-\infty})=t\int_{\lambda=m}^{\infty}F(\lambda)d\lambda+tmF(m). \label{oneone}
  \end{eqnarray}
  Since by our assumption the measure $dF$ is zero on $(-\infty,m)$,
  integration by parts yields
  \begin{eqnarray}
    -t\int_{\lambda=-\infty}^{\infty}\lambda dF(\lambda)=-t\lambda F(\lambda)|_m^{\infty}+t\int_{\lambda=m}^{\infty}F(\lambda)d\lambda=\nonumber \\=tmF(m)+t\int_{\lambda=m}^{\infty}F(\lambda)d\lambda. \label{twotwo}
  \end{eqnarray} 
  The proposition follows from combining equation (\ref{oneone}) and
  equation (\ref{twotwo}).
\end{proof}

\begin{theorem} \label{theorem2} The Legendre transform
  $\widehat{\psi}_t$ of a maximal test curve $\psi_{\lambda}$ is a
  weak geodesic ray emanating from $\psi_{-\infty}$.
\end{theorem}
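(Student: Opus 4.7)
The proof is essentially an assembly of results already in place. By Lemma \ref{lemma8}, $\widehat{\psi}_t$ is a subgeodesic ray of locally bounded positive metrics emanating from $\psi_{-\infty}$, so by the characterization in Lemma \ref{lemmageo2} it suffices to show that the Aubin-Mabuchi energy $\mathcal{E}(\widehat{\psi}_t,\widehat{\psi}_0)$ is linear in $t$ for $t\in[0,\infty)$.

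But this is exactly what Proposition \ref{lemma9} provides: under the maximality hypothesis on the test curve, one has
\[
\mathcal{E}(\widehat{\psi}_t,\widehat{\psi}_0)=-t\int_{-\infty}^{\infty}\lambda\,dF(\lambda),
\]
and the right-hand side is manifestly of the form $ct$ for a constant $c$. The only thing to check is that $c$ is finite, which is immediate: $F(\lambda)$ is bounded above by $\int_X c_1(L)^n$ for all $\lambda$, and by the definition of a test curve (Definition \ref{definitiontestcurve}) $F$ is constant for $\lambda < -C$ and vanishes for $\lambda>C$, so the Stieltjes measure $dF$ has compact support.

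Combining the subgeodesic property with the linearity of the energy, Lemma \ref{lemmageo2} gives that $\widehat{\psi}_t$ is a weak geodesic ray. The fact that it emanates from $\psi_{-\infty}$ was already established in the last paragraph of the proof of Lemma \ref{lemma8}. There is no substantive obstacle here: the hard work has been absorbed into Proposition \ref{lemma9}, whose proof in turn rested on the energy estimates of Proposition \ref{key} made possible by the maximality property encapsulated in Theorem \ref{lemma3}.
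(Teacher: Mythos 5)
Your proof is correct and follows essentially the same route as the paper: invoke Lemma \ref{lemma8} for the subgeodesic property and initial condition, Proposition \ref{lemma9} for linearity of the Aubin-Mabuchi energy, and Lemma \ref{lemmageo2} to conclude. The small extra remark verifying finiteness of the constant $c$ is a harmless (and reasonable) sanity check, but it does not change the substance; the argument is identical to the paper's.
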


\begin{proof}
  That $\widehat{\psi}_t$ is a subgeodesic emanating from
  $\psi_{-\infty}$ was proved in Lemma \ref{lemma8}. According to
  Proposition \ref{lemma9} the energy
  $\mathcal{E}(\widehat{\psi}_t,\widehat{\psi}_0)$ is linear in $t,$
  and therefore by Lemma \ref{lemmageo2} we get that $\widehat{\psi}_t$ is
  a weak geodesic ray.
\end{proof}

These weak geodesics are continuous in $\phi$ in the following sense:

\begin{proposition}
  Let $\psi_{\lambda}$ be a test curve and $\phi,\phi'\in \mathcal
  H(L)$.  Suppose $\phi_{\lambda}$ is the maximal curve of $\phi$
  (with respect to $\psi_{\lambda}$) and similarly for
  $\phi_{\lambda}'$.  If $\|\phi-\phi'\|_{\infty}<C$ then
  $$\|\widehat{\phi}_t-\widehat{\phi'}_t\|_{\infty}<C\quad \text{for
    all }t.$$
\end{proposition}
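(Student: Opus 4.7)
The plan is to deduce the bound on the Legendre transforms from a pointwise bound on the maximal envelopes themselves, which in turn follows directly from the definition of $\phi_\lambda$ as an envelope.

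First I would establish the pointwise estimate
\[
 |\phi_\lambda - \phi'_\lambda| \le C \qquad \text{for every } \lambda \in \mathbb R.
\]
Fix $\lambda$ and let $\psi \in \PSH(L)$ be any candidate in the supremum defining $P_{[\psi_\lambda]}\phi$, so $\psi \le \phi$ and $\psi \sim \psi_\lambda$. Since $\|\phi - \phi'\|_\infty < C$ we have $\phi \le \phi' + C$, hence $\psi - C \le \phi'$, and clearly $\psi - C \sim \psi_\lambda$ as well. Thus $\psi - C$ is a candidate in the defining supremum for $P_{[\psi_\lambda]}\phi'$, giving $\psi - C \le P_{[\psi_\lambda]}\phi'$, and so $\psi \le P_{[\psi_\lambda]}\phi' + C$. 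Taking the supremum over all such $\psi$ and then the upper semicontinuous regularization (which commutes with the addition of constants) yields $\phi_\lambda \le \phi'_\lambda + C$. By symmetry, swapping the roles of $\phi$ and $\phi'$ gives the reverse inequality.

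Next I would transfer this bound to the Legendre transforms. From $\phi_\lambda \le \phi'_\lambda + C$ it follows that
\[
 \phi_\lambda + \lambda t \le \phi'_\lambda + \lambda t + C
\]
pointwise, for every $\lambda \in \mathbb R$ and every $t \ge 0$. Taking the supremum over $\lambda$ and then the upper semicontinuous regularization (again using $(f+C)^* = f^* + C$) gives $\widehat{\phi}_t \le \widehat{\phi'}_t + C$. The reverse inequality follows by symmetry, so $\|\widehat{\phi}_t - \widehat{\phi'}_t\|_\infty \le C$ for all $t$. The strict form claimed in the proposition is then obtained by choosing $C' < C$ with $\|\phi - \phi'\|_\infty < C'$ and running the argument with $C'$ in place of $C$.

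There is no serious obstacle here: the argument is purely formal, relying only on the definition of the maximal envelope as a supremum with a singularity constraint (invariant under translation by constants) and on the elementary fact that adding a constant commutes with the usc regularization. No potential theory beyond this is required.
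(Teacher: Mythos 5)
Your proof is correct and takes essentially the same approach as the paper: reduce to the monotonicity and translation-invariance of the maximal envelope construction, then observe these are preserved by the Legendre transform. The paper states those two properties as ``clear'' and uses them abstractly; you unpack them directly from the defining supremum, which makes the argument slightly more self-contained, and you also address the strict versus non-strict inequality explicitly, but the underlying idea is identical.
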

\begin{proof}
  We claim that $\|\phi_{\lambda}-\phi'_{\lambda}\|_{\infty}<C$ for
  all $\lambda$.  But this is clear since $\phi\leq \phi'$ implies
  that $\phi_{\lambda}\leq \phi'_{\lambda}$ for all $\lambda$. It is
  also clear that $(\phi+C)_{\lambda}=\phi_{\lambda}+C$ when $C$ is a
  constant.    Thus $\widehat{\phi}_t\leq \widehat{\phi'}_t$ for all $t$ and $(\phi+C)_{\lambda}=\phi_{\lambda}+C$, so consequently $\widehat{\phi+C}_t=\widehat{\phi}_t+C$  which proves the lemma.
\end{proof}

Let $[\psi_{\lambda}]$ be an analytic test configuration, and let $\phi_{\lambda}$ be the associated maximal test curve. Then $[\phi_{\lambda}]$ defines a new analytic test configuration. This could possibly differ from $[\psi_{\lambda}],$ but the following proposition tells us that the associated geodesic rays are the same.

\begin{proposition} \label{vasd}
Let $\phi'\in \mathcal{H}(L)$. Then the Legendre transform of $\phi'_{[\phi_{\lambda}]}$ coincides with the Legendre transform of $\phi'_{\lambda}:=\phi'_{[\psi_{\lambda}]}$.
\end{proposition}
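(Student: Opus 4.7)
My plan is to prove the stronger statement that the two envelopes themselves coincide, $\phi'_{[\phi_\lambda]}=\phi'_\lambda$ for every $\lambda$; the desired equality of Legendre transforms then follows immediately on applying $\sup_\lambda\{\,\cdot+t\lambda\}^{*}$ to both sides.

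First, I would show that $\phi'_\lambda\sim \phi_\lambda$ as singularity types. Since $\phi,\phi'\in\mathcal{H}(L)$ and $X$ is compact, $C:=\|\phi-\phi'\|_\infty<\infty$, and the argument of the preceding continuity proposition already gives the uniform bound $\|\phi_\lambda-\phi'_\lambda\|_\infty\le C$ for every $\lambda$ (combine the monotonicity $\phi\le\phi+C\Rightarrow\phi_\lambda\le(\phi+C)_\lambda$ with the translation identity $(\phi+C)_\lambda=\phi_\lambda+C$). Because $\phi'_\lambda\le\phi'$ and $\phi'_\lambda\sim\phi_\lambda$, the metric $\phi'_\lambda$ is then a competitor in the supremum defining $\phi'_{[\phi_\lambda]}$, which gives the easy inequality $\phi'_{[\phi_\lambda]}\ge\phi'_\lambda$.

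For the reverse inequality, I would invoke that $\phi'_\lambda=\phi'_{[\psi_\lambda]}$ is maximal with respect to $\phi'$ by Theorem \ref{lemma3}, and then use Proposition \ref{maximality} (the domination principle for maximal envelopes): any competitor $\psi\le\phi'$ with $\psi\sim\phi_\lambda$ is also equivalent to $\phi'_\lambda$ by transitivity of $\sim$ (using $\phi_\lambda\sim\phi'_\lambda$), and so the domination principle forces $\psi\le\phi'_\lambda$ on all of $X$. Taking the supremum over such $\psi$ and then the usc regularization yields $\phi'_{[\phi_\lambda]}\le\phi'_\lambda$, and combined with the first inequality this gives $\phi'_{[\phi_\lambda]}=\phi'_\lambda$, whence the two Legendre transforms agree. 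The main obstacle is verifying the hypothesis of Proposition \ref{maximality}: one needs, for each relevant $\lambda$, an auxiliary positive metric $\rho$ more singular than $\phi'_\lambda$, with small unbounded locus and with $MA(\rho)$ dominating a volume form. In the analytic test configuration framework this should follow from the ampleness of $L$ together with a standard Demailly-type construction of singular metrics with strictly positive Monge-Amp\`ere mass, although some care is required for values of $\lambda$ close to the critical value $\lambda_c$ where $\phi'_\lambda$ becomes highly singular.
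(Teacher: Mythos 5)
Your plan matches the paper's strategy in broad outline: compare the two maximal envelopes directly using $\phi'_\lambda\sim\phi_\lambda$, the easy inequality from competitors, and Proposition~\ref{maximality} for the reverse. The paper also first reduces to $\phi'=\phi$ via $\phi'_{[\phi_\lambda]}=\phi'_{[\phi'_\lambda]}$ (since $\phi_\lambda\sim\phi'_\lambda$), which you skip; that is cosmetic. However there are two genuine gaps.

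\emph{Gap 1: the auxiliary metric $\rho$.} You correctly flag the construction of $\rho$ as the main obstacle, but dismissing it as a ``standard Demailly-type construction'' does not work. You need $\rho$ to be simultaneously \emph{more singular} than $\phi'_\lambda$ (which can be arbitrarily singular as $\lambda\to\lambda_c$), to have small unbounded locus, and to have $MA(\rho)$ dominating a volume form; these three constraints pull in different directions and there is no off-the-shelf recipe. The paper's construction is a specific trick that exploits the test-curve structure: for $\lambda<\lambda_c$, pick $\lambda'$ with $\lambda<\lambda'<\lambda_c$ and a constant $C<\lambda$ with $\phi_C=\phi$; by concavity of the test curve, $\phi_\lambda\geq r\phi+(1-r)\phi_{\lambda'}=:\rho$, where $r\in(0,1)$ is chosen so that $\lambda=rC+(1-r)\lambda'$. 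Then $\rho$ is more singular than $\phi_\lambda$, has small unbounded locus (since $\phi_{\lambda'}$ does, by definition of a test curve), and by multilinearity of the Monge--Amp\`ere operator $MA(\rho)\geq r^nMA(\phi)$, which dominates a volume form since $\phi\in\mathcal{H}(L)$. Without this convex-combination construction your application of Proposition~\ref{maximality} is unjustified.

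\emph{Gap 2: the critical value $\lambda_c$.} You claim to prove the stronger statement $\phi'_{[\phi_\lambda]}=\phi'_\lambda$ for \emph{every} $\lambda$, but the construction of $\rho$ above needs a $\lambda'$ strictly between $\lambda$ and $\lambda_c$, which is impossible when $\lambda=\lambda_c$. So the pointwise equality is only established for $\lambda\neq\lambda_c$ (trivially for $\lambda>\lambda_c$). The paper therefore does \emph{not} prove your stronger claim; instead, from the pointwise equality for $\lambda\neq\lambda_c$ and monotonicity it deduces the two-sided bound $\phi_\lambda\leq\phi_{[\phi_\lambda]}\leq\phi_{\lambda-\epsilon}$ for all $\epsilon>0$, passes to Legendre transforms to get
$$\widehat{(\phi_\lambda)}_t\leq\widehat{(\phi_{[\phi_\lambda]})}_t\leq\widehat{(\phi_{\lambda-\epsilon})}_t=\widehat{(\phi_\lambda)}_t+\epsilon t,$$
and lets $\epsilon\to0$. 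Your remark that ``some care is required near $\lambda_c$'' is a correct warning, but the care needed is exactly this $\epsilon$-squeeze on the Legendre transform side, which is absent from your argument; as written, the stronger envelope-level equality at $\lambda=\lambda_c$ is neither proved nor needed.
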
 

\begin{proof}
Since $\phi'_{\lambda}\sim \phi_{\lambda}$ we get that $\phi'_{[\phi_{\lambda}]}=\phi'_{[\phi'_{\lambda}]},$ thus without loss of generality we can assume that $\phi'=\phi$. Recall that the critical value $\lambda_c$ was defined as $$\lambda_c:=\inf \{\lambda: \phi_{\lambda}\equiv -\infty\}.$$ If $\lambda<\lambda_c$ there exists a $\lambda'$ such that $\lambda<\lambda'<\lambda_c,$ and thus by the assumption $\phi_{\lambda'}$ has small unbounded locus. Let $C$ be a constant less than $\lambda$ such that $\phi_C=\phi$. By concavity it follows that 
$$\phi_{\lambda}\geq r\phi+(1-r)\phi_{\lambda'},$$ 
where $0<r<1,$ is chosen such that $$\lambda=rC+(1-r)\lambda'.$$ If we let $$\rho:=r\phi+(1-r)\phi_{\lambda'},$$ by the multilinearity of the Monge-Amp\'ere operator it follows that $MA(\rho)$ dominates the volume form $r^nMA(\phi)$. Furthermore $\rho$ has small unbounded locus and is more singular than $\phi_{\lambda}$. Thus by Proposition \ref{maximality} we have $$P_{\phi_{\lambda}+C}\phi\leq \phi_{\lambda}$$ for any constant $C$ and therefore 
\begin{equation} \label{equationzxc}
\phi_{[\phi_{\lambda}]}=\phi_{\lambda},
\end{equation}
whenever $\lambda<\lambda_c$. If $\lambda>\lambda_c$ then clearly equation (\ref{equationzxc}) holds as well since both sides are identically equal to minus infinity. It follows that for any $\epsilon>0,$  $$\phi_{\lambda}\leq\phi_{[\phi_{\lambda}]}\leq \phi_{\lambda-\epsilon},$$ which implies that $$\widehat{(\phi_{\lambda})}_t\leq \widehat{(\phi_{[\phi_{\lambda}]})}_t\leq \widehat{(\phi_{\lambda-\epsilon})}_t=\widehat{(\phi_{\lambda})}_t+\epsilon t.$$ Since $\epsilon>0$ was arbitrary the proposition follows.
\end{proof}

\section{Filtrations of the ring of sections} \label{filt}

First we recall what is meant by a filtration of a graded algebra.

\begin{definition}
  A \emph{filtration} $\mathcal{F}$ of a graded algebra $\oplus_k V_k$
  is a vector space-valued map from $\mathbb{R}\times \mathbb{N},$
  $$\mathcal{F}: (t,k) \longmapsto \mathcal{F}_t V_k,$$ such that for
  any $k,$ $\mathcal{F}_t V_k$ is a family of subspaces of $V_k$ that
  is decreasing and left-continuous in $t$.
\end{definition}

In \cite{Chen} Boucksom-Chen consider certain filtrations which
behaves well with respect to the multiplicative structure of the
algebra.  They give the following definition.

\begin{definition} \label{def} Let $\mathcal{F}$ be a filtration of a
  graded algebra $\oplus_k V_k$. We shall say that
  \begin{enumerate}
  \item[(i)] $\mathcal{F}$ is \emph{multiplicative} if
    $$(\mathcal{F}_t V_k)(\mathcal{F}_s V_m)\subseteq
    \mathcal{F}_{t+s}V_{k+m}$$ for all $k,m \in \mathbb{N}$ and $s,t
    \in \mathbb{R}$.
  \item[(ii)] $\mathcal{F}$ is \emph{(linearly) bounded} if there
    exists a constant $C$ such that $\mathcal{F}_{-kC} V_k=V_k$ and
    $\mathcal{F}_{kC}V_k=\{0\}$ for all $k$.
  \end{enumerate}
\end{definition}

The goal in this section is to associate an ``analytic test configuration'' $\phi_{\lambda}^{\mathcal F}$ to any bounded multiplicative filtration of the section ring $R(L) = \oplus_k H^0(kL),$ which will be used in the next section to construct an associated geodesic.

\begin{example}
  An important example for our purpose will be the filtrations constructed from an algebraic test configuration (Section \ref{filt2}).  Another kind of example to have in mind come when $(X,L)$ is toric with moment polytope $\Delta$.  Let $f\colon \Delta\to \mathbb R$ be a bounded positive concave function.  Recalling that $\Delta\cap k^{-1}\mathbb Z^n$ parametrizes a toric basis $\{s_{\alpha}^{(k)}\}$ for $H^0(kL)$ define
$$\mathcal F_t H^0(kL) = \operatorname{linspan}\{ s_\alpha^{(k)} : f(\alpha)\ge k^{-1}t\}\subset H^0(kL).$$
It is easy to check that the concavity of $f$ implies that $\mathcal F$ is multiplicative and that $f$ is bounded implies that $\mathcal F$ is linearly bounded.  When $f$ is rational piecewise linear it turns out that this is precisely the filtration associated to the algebraic test configuration defined by $f$ as considered by Donaldson \cite{Donaldson2} (see also \cite{Sz}).   In this way one sees how the analytic test configurations and associated geodesics considered in this section and the next generalize the algebraic picture.
\end{example}

Now fix $\phi\in \mathcal{H}(L),$ and let $dV$ be a smooth volume form
on $X$ with unit mass. This gives the $L^2$-scalar product on
$H^0(kL)$ by letting $$(s,t)_{k\phi}:=\int_X
s(z)\overline{t(z)}e^{-k\phi(z)}dV(z).$$ For any $\lambda\in
\mathbb{R}$ let $\{s_{i,\lambda}\}$ be an orthonormal basis for
$\mathcal{F}_{k\lambda}H^0(kL)$ and define
$$\phi_{k,\lambda}:=\frac{1}{k}\ln(\sum |s_{i,\lambda}|^2),$$ which is
a positive metric on $L$.

\begin{lemma} \label{lemmafilt1} For any $\lambda$, the sequence of
  metrics $\phi_{k,\lambda}$ converges to a limit as $k$ tends to
  infinity, and the upper semicontinuous regularization of the limit $$\phi_{\lambda}^{\mathcal F}:={\lim}^*_{k\to \infty}  \phi_{k,\lambda}$$ is a positive metric. 
\end{lemma}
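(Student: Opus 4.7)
The plan is to derive pointwise convergence of the Bergman-type functions $B_{k,\lambda}(x) := \sum_i |s_{i,\lambda}(x)|^2$ via a near-supermultiplicativity estimate that follows from the multiplicative property of $\mathcal{F}$, and then take the usc regularization to obtain a genuine positive metric.

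First I would note that each $\phi_{k,\lambda} = \tfrac{1}{k}\ln B_{k,\lambda}$ is a positive (singular) metric, as $B_{k,\lambda}$ is a sum of squared norms of holomorphic sections. A uniform upper bound comes from the Bouche--Catlin--Tian--Zelditch asymptotics (cited just after Theorem~\ref{demailly}) for the smooth strictly positive metric $\phi$: the full Bergman function of $H^0(kL)$ relative to $k\phi$ is $O(k^n)$ uniformly in $x$, and since $\mathcal F_{k\lambda}H^0(kL)\subseteq H^0(kL)$ the same bound applies to $B_{k,\lambda}$, giving $\phi_{k,\lambda}\le \phi+\tfrac{n\ln k+C'}{k}$ uniformly. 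In particular the sequence is locally uniformly bounded above.

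The heart of the argument is that multiplicativity of $\mathcal{F}$ yields a near-supermultiplicativity of the Bergman functions. For unit-norm $s\in \mathcal F_{k\lambda}H^0(kL)$ and $t\in \mathcal F_{m\lambda}H^0(mL)$, Definition~\ref{def}(i) gives $st\in \mathcal F_{(k+m)\lambda}H^0((k+m)L)$. Using the previous step applied to $t$ pointwise, one estimates
\begin{equation*}
\|st\|^2_{(k+m)\phi}=\int_X |s|^2|t|^2e^{-(k+m)\phi}\,dV \le \bigl(\sup_y |t(y)|^2e^{-m\phi(y)}\bigr)\|s\|^2_{k\phi}\le Cm^n\|s\|^2_{k\phi}\|t\|^2_{m\phi}.
\end{equation*}
Inserting this into the variational characterization $B_{k+m,\lambda}(x)\ge |s(x)t(x)|^2/\|st\|^2_{(k+m)\phi}$ and taking suprema over $s$ and $t$ (using the symmetric bound as well) yields
\begin{equation*}
(k+m)\phi_{k+m,\lambda}(x)\ge k\phi_{k,\lambda}(x)+m\phi_{m,\lambda}(x) - n\ln\min(k,m)-\ln C.
\end{equation*}

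For each fixed $x$ this is an almost-supermultiplicative sequence with error $o(k+m)$. A standard Fekete-type argument then applies: pick $k_0$ large with $\phi_{k_0,\lambda}(x)$ close to $\limsup_k \phi_{k,\lambda}(x)$ and iterate the inequality along multiples of $k_0$ to control $\phi_{jk_0,\lambda}(x)/1$, then handle a general $k=jk_0+r$ by one more application. This gives pointwise convergence of $\phi_{k,\lambda}(x)$ to a limit in $[-\infty,\phi(x)]$. Taking the usc regularization of this pointwise limit of uniformly upper-bounded psh functions then produces a psh, hence positive, metric $\phi_\lambda^{\mathcal F}$ (identically $-\infty$ when the filtration already forces this, which is still allowed under our conventions).

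The main obstacle is the Fekete step in the presence of the logarithmic error term and of points where $\phi_{k,\lambda}(x)=-\infty$ for some $k$. The first issue is handled by absorbing $n\ln k$ into an auxiliary sequence so that the remaining error is a genuine constant $-D$; the second is handled by restricting attention to points where at least one $\phi_{k_0,\lambda}(x)$ is finite (on the remaining points the sequence is $-\infty$ for all $k$ and the claim is trivial), so that the iteration above produces finite values along all sums $jk_0+r$. Everything else is routine.
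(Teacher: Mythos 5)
Your proposal is correct but takes a route that is genuinely different from the paper's. The paper introduces an auxiliary sup-norm Bergman function $F_{k,\lambda}(z):=\sup\{|s(z)|^2 : s\in\mathcal F_{k\lambda}H^0(kL),\ \|s\|_\infty\le 1\}$, which is \emph{exactly} supermultiplicative in $k$ by multiplicativity of $\mathcal F$; it then applies Fekete's lemma cleanly and invokes the Bernstein-Markov property to transfer back to the $L^2$-Bergman kernels $\sum_i|s_{i,\lambda}|^2$, incurring only an $e^{\epsilon k}$ error. You instead work directly with the $L^2$-Bergman kernels and derive an \emph{approximate} supermultiplicativity $B_{k+m,\lambda}\ge B_{k,\lambda}B_{m,\lambda}/(C\min(k,m)^n)$ via the Bergman sup-norm bound $\|t\|_\infty^2\le Cm^n\|t\|_{m\phi}^2$, so the price you pay for avoiding the auxiliary function is that you need a de Bruijn--Erd\H{o}s type almost-superadditive Fekete lemma (with error $O(\ln(k+m))$, which is summable against $1/N^2$, so the lemma does apply). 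Both routes reach the same conclusion; the paper's exact supermultiplicativity is slightly cleaner and also lets it observe directly that the pointwise limit is a \emph{supremum} of psh metrics $(\frac1k\ln F_{k,\lambda})^*$, giving the psh property of the usc regularization essentially for free. Your appeal to the standard fact that the usc regularization of a $\limsup$ of a locally uniformly upper-bounded sequence of psh functions is psh is also valid and achieves the same thing. One caveat: the phrase ``the iteration above produces finite values along all sums $jk_0+r$'' is not quite right as stated, since even when $\phi_{k_0,\lambda}(x)>-\infty$ the near-supermultiplicativity only propagates finiteness along the numerical semigroup generated by the $k$ for which $\phi_{k,\lambda}(x)>-\infty$, not along all residues $r$; this is a real gap in the pointwise Fekete step, though one the paper's proof shares (its own citation of Fekete's lemma is equally silent on the $-\infty$ issue), so it does not distinguish your argument from theirs.
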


\begin{proof}
  Since $$K_{\lambda}(z,w):=\sum_i
  s_{i,\lambda}(z)\overline{s_{i,\lambda}(w)}$$ is a reproducing
  kernel of $\mathcal{F}_{k\lambda}H^0(kL)$ with respect to
  $(\cdot,\cdot)_{k\phi},$ as for the full Bergman kernel we have the
  following useful characterization (see, for example, \cite[(4.3)]{Berman})
  \begin{equation} \label{equation100} \sum
    |s_{i,\lambda}|^2=\sup\{|s|^2: s\in \mathcal{F}_{k\lambda}H^0(kL),
    \|s\|_{k\phi}^2\leq 1\}.
  \end{equation}
  Let $\|s\|_{\infty}^2:=\sup_{z\in X}\{|s(z)|^2e^{-k\phi}\}$ and
  define $$F_{k,\lambda}(z):=\sup\{|s(z)|^2: s\in
  \mathcal{F}_{k\lambda}H^0(kL), \|s\|_{\infty}^2\leq 1\}.$$ We trivially have the upper bound  $$F_{k,\lambda}(z)\leq e^{-k\phi(z)}.$$ It follows that $$\usc (\frac{1}{k}\ln F_{k,\lambda})={\sup}^*\{\frac{1}{k}\ln |s|^2: s\in
  \mathcal{F}_{k\lambda}H^0(kL), \|s\|_{\infty}^2\leq 1\})$$ is a positive metric. Let
  $\lambda$ be fixed, pick a point $z\in X,$ and let for all $k,$
  $s_k\in \mathcal{F}_{k\lambda}H^0(kL)$ be such that
  $\|s_k\|_{\infty}=1$ and $$F_{k,\lambda}(z)=|s_k(z)|^2.$$ Since the
  product $s_ks_m$ lies in $\mathcal{F}_{(k+m)\lambda}H^0((k+m)L)$ by
  the multiplicativity of $\mathcal{F},$ and $\|s_ks_m\|_{\infty}\leq
  \|s_k\|_{\infty}\|s_m\|_{\infty},$ we get that
  \begin{equation} \label{submult} F_{k+m,\lambda}(z)\geq
    F_{k,\lambda}(z)F_{m,\lambda}(z),
  \end{equation} 
  so the map $k\mapsto F_{k,\lambda}(z)$ is multiplicative. The
  existence of a limit $$\lim_{k\to \infty}\frac{1}{k}\ln
  F_{k,\lambda}(z)$$ thus follows from Fekete's lemma (see e.g.\
  \cite[p37]{Bollobas}). Since we assumed that $dV$ had unit mass, for any section $s$ $$\|s\|^ 2_{k\phi}\leq \|s\|^2_{\infty},$$ and
  thus by equation $(\ref{equation100})$ $$\sum |s_{i,\lambda}(z)|^2\geq
  F_{k,\lambda}(z).$$ On the other hand, by the Bernstein-Markov
  property of any volume form $dV$ we have that for any $\epsilon>0$
  there exists a constant $C_{\epsilon}$ so that
  $$\|s\|^2_{\infty}\leq C_{\epsilon}e^{\epsilon k}\|s\|^2_{k\phi},$$
  and thus
  \begin{equation} \label{bern} \sum |s_{i,\lambda}(z)|^2\leq
    C_{\epsilon}e^{\epsilon k}F_{k,\lambda}(z).
  \end{equation}
  It follows that the difference
  $\phi_{k,\lambda}(z)-\frac{1}{k}\ln F_{k,\lambda}(z)$ tends to zero
  as $k$ tends to infinity, thus the convergence of $\phi_{k,\lambda}$ follows. 
  
  By the multiplicativity, for any $k\in \mathbb{N}$ $$\frac{1}{k}\ln F_{k,\lambda}\leq \lim _{l\to \infty}\frac{1}{l}\ln F_{l,\lambda}=\lim _{l\to \infty}\phi_{l,\lambda},$$ and thus 
  \begin{equation} \label{sdfg}
  \usc (\frac{1}{k}\ln F_{k,\lambda})\leq {\lim}^*_{l\to \infty}\phi_{l,\lambda}=:\phi_{\lambda}^{\mathcal{F}}.
  \end{equation}
  On the other hand, clearly 
$$\lim _{l\to \infty}\phi_{l,\lambda}\leq {\sup}_k \{\usc (\frac{1}{k}\ln F_{k,\lambda})\},$$ and it follows that $$\phi_{\lambda}^{\mathcal{F}}={\sup}^*_k \{\usc (\frac{1}{k}\ln F_{k,\lambda})\}$$ so $\phi_{\lambda}^{\mathcal{F}}$ is indeed a positive metric. 
\end{proof}
\begin{remark}
  Since all volume forms $dV$ on $X$ are equivalent, the limit
  $\phi_{\lambda}$ does not depend on the choice of volume form $dV$.
\end{remark}

\begin{lemma} \label{convrew}
We have that $$\phi_{k,\lambda}\leq \phi_{\lambda}^{\mathcal{F}}+\epsilon(k),$$ where $\epsilon(k)$ is a constant independent of $\lambda$ that tends to zero as $k$ tends to infinity.
\end{lemma}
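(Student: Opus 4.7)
The plan is to combine the two ingredients already established in the proof of Lemma \ref{lemmafilt1}: the $L^\infty$-$L^2$ comparison coming from the Bernstein-Markov property (which is uniform over the subspace, hence in $\lambda$), together with Fekete's lemma applied pointwise to the supermultiplicative sequence $k \mapsto F_{k,\lambda}(z)$.

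First I would observe that by the supermultiplicativity \eqref{submult}, Fekete's lemma yields the pointwise estimate
\[
\tfrac{1}{k}\ln F_{k,\lambda}(z)\;\le\;\sup_{l}\tfrac{1}{l}\ln F_{l,\lambda}(z)\;=\;\lim_{l\to\infty}\tfrac{1}{l}\ln F_{l,\lambda}(z)
\]
for each $z\in X$ and every $k$. In the proof of the previous lemma it was shown that $\phi_{l,\lambda}-\tfrac{1}{l}\ln F_{l,\lambda}\to 0$ pointwise, hence $\lim_{l}\tfrac{1}{l}\ln F_{l,\lambda} = \lim_{l}\phi_{l,\lambda}$. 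Passing to the upper semicontinuous regularization on the right, which only increases the value, gives the pointwise bound $\tfrac{1}{k}\ln F_{k,\lambda}\le \phi_{\lambda}^{\mathcal F}$ everywhere on $X$, for every $k$ and every $\lambda$.

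Next I would invoke the Bernstein-Markov estimate \eqref{bern}, which yields
\[
\phi_{k,\lambda}\;\le\;\tfrac{1}{k}\ln F_{k,\lambda}\;+\;\tfrac{\ln C_\epsilon}{k}+\epsilon
\]
for every $\epsilon>0$. The crucial point is that the Bernstein-Markov constant $C_\epsilon$ depends only on $\phi$ and on $\epsilon$, and in particular is independent of the subspace $\mathcal{F}_{k\lambda}H^0(kL)\subseteq H^0(kL)$, so the estimate is uniform in $\lambda$. Combining this with the previous step produces
\[
\phi_{k,\lambda}\;\le\;\phi_\lambda^{\mathcal F}+\tfrac{\ln C_\epsilon}{k}+\epsilon.
\]
Choosing a sequence $\epsilon_k\to 0$ slowly enough that $\tfrac{\ln C_{\epsilon_k}}{k}\to 0$ as well (for instance, by a standard diagonal choice using the fact that $k\mapsto C_\epsilon$ is fixed once $\epsilon$ is) and setting $\epsilon(k):=\tfrac{\ln C_{\epsilon_k}}{k}+\epsilon_k$ yields the stated bound $\phi_{k,\lambda}\le \phi_{\lambda}^{\mathcal F}+\epsilon(k)$ with $\epsilon(k)$ independent of $\lambda$ and tending to zero.

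The only mildly delicate point, which I would not expect to be a real obstacle, is ensuring the uniformity of $\epsilon(k)$ in $\lambda$. This is automatic because both the Bernstein-Markov constant and Fekete's lemma apply to $H^0(kL)$ (respectively to the pointwise supermultiplicative sequence) without reference to the filtration level $\lambda$; the filtration enters only through the subspace we restrict to, which Bernstein-Markov and the supremum-characterization \eqref{equation100} handle without a $\lambda$-dependent loss.
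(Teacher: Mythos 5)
Your argument is correct and reproduces the paper's proof almost verbatim: the paper also combines the Bernstein--Markov inequality \eqref{bern} (uniform in $\lambda$) with the supermultiplicativity bound \eqref{sdfg}, and then concludes by the same diagonal choice of $\epsilon_k$. The only difference is that you spell out the diagonalization step that the paper leaves implicit with ``This yields the lemma.''
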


\begin{proof}
By combining the inequalities (\ref{bern}) and (\ref{sdfg}) from the proof of the the previous lemma we see that for any $\epsilon>0$ there exists a constant $C_{\epsilon}$ independent of $\lambda$ such that $$ \phi_{k,\lambda}\leq \phi_{\lambda}^{\mathcal F}+\epsilon+(1/k)\ln C_{\epsilon}.$$ This yields the lemma.
\end{proof}

\begin{proposition}
  The map $\lambda\mapsto \phi_{\lambda}^{\mathcal F}$ is a test
  curve.
\end{proposition}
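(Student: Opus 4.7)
The plan is to verify in turn the four defining properties (i)--(iv) of a test curve from Definition \ref{definitiontestcurve}; the first three are short consequences of the setup already in place, and concavity is the substantive step.

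Properties (i) and (ii) will follow immediately from the boundedness hypothesis on $\mathcal{F}$. When $\lambda<-C$, boundedness gives $\mathcal{F}_{k\lambda}H^0(kL)=H^0(kL)$ for every $k$, so $\phi_{k,\lambda}$ is the ordinary Bergman metric of $H^0(kL)$; by the Bouche-Catlin-Tian-Zelditch theorem (recalled just after Theorem \ref{demailly}) this converges to $\phi$ in every $C^m$ norm, so $\phi_{\lambda}^{\mathcal{F}}=\phi$ and in particular is locally bounded. When $\lambda>C$, boundedness gives $\mathcal{F}_{k\lambda}H^0(kL)=\{0\}$ for every $k$, so $\phi_{k,\lambda}\equiv-\infty$ and hence $\phi_{\lambda}^{\mathcal{F}}\equiv-\infty$. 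For (iii), if $\phi_{\lambda}^{\mathcal{F}}\not\equiv-\infty$ then the representation $\phi_{\lambda}^{\mathcal{F}}=(\sup_{k}(k^{-1}\ln F_{k,\lambda})^{*})^{*}$ from the proof of Lemma \ref{lemmafilt1} forces some $\mathcal{F}_{k\lambda}H^0(kL)\neq\{0\}$; taking a basis $\{s_i\}$, the common zero set $Z$ is a proper analytic subvariety, hence closed complete pluripolar, and on $X\setminus Z$ the sandwich $(k^{-1}\ln F_{k,\lambda})^{*}\leq \phi_{\lambda}^{\mathcal{F}}\leq \phi$ shows $\phi_{\lambda}^{\mathcal{F}}$ is locally bounded, so it has small unbounded locus.

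The substantive step is the concavity (iv). I would first derive the pointwise submultiplicativity
$$F_{k_1+k_2,\mu}(z)\geq F_{k_1,\lambda_1}(z)\,F_{k_2,\lambda_2}(z),\qquad \mu:=\tfrac{k_1\lambda_1+k_2\lambda_2}{k_1+k_2},$$
as a direct consequence of multiplicativity: if $s_i\in \mathcal{F}_{k_i\lambda_i}H^0(k_iL)$ with $\|s_i\|_{\infty}\leq 1$ then $s_1 s_2\in \mathcal{F}_{k_1\lambda_1+k_2\lambda_2}H^0((k_1+k_2)L)$ with $\|s_1 s_2\|_{\infty}\leq 1$. Taking $(k_1+k_2)^{-1}\ln$ and letting $k_1,k_2\to\infty$ along $k_1=pN$, $k_2=(q-p)N$ for rational $t=p/q$, the pointwise a.e.\ convergence of $\phi_{k,\lambda}$ to $\phi_{\lambda}^{\mathcal{F}}$ from Lemma \ref{lemmafilt1} will yield
$$\phi_{t\lambda_1+(1-t)\lambda_2}^{\mathcal{F}}\geq t\,\phi_{\lambda_1}^{\mathcal{F}}+(1-t)\,\phi_{\lambda_2}^{\mathcal{F}}$$
almost everywhere, hence everywhere after usc regularization. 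To promote concavity from rational to arbitrary $t\in[0,1]$, I would exploit that $\phi_{\lambda}^{\mathcal{F}}$ is monotone decreasing in $\lambda$ (since $\mathcal{F}_{k\lambda}$ is): for $\lambda_1<\lambda_2$ and rationals $t_n\uparrow t$, the arguments $\mu_n:=t_n\lambda_1+(1-t_n)\lambda_2$ decrease to $\mu:=t\lambda_1+(1-t)\lambda_2$, so monotonicity yields $\phi_{\mu}^{\mathcal{F}}\geq \phi_{\mu_n}^{\mathcal{F}}\geq t_n\phi_{\lambda_1}^{\mathcal{F}}+(1-t_n)\phi_{\lambda_2}^{\mathcal{F}}$, whose right-hand side converges linearly to the desired bound.

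The main technical obstacle I anticipate is careful handling of the upper semicontinuous regularizations when translating the submultiplicativity of the $F_{k,\lambda}$ into concavity of $\phi_{\lambda}^{\mathcal{F}}$ itself: one needs that the almost-everywhere inequality coming from the passage to the limit in $N$ upgrades to an everywhere inequality between the regularizations, which is standard but not entirely automatic since $\phi_{\lambda}^{\mathcal{F}}$ is the usc regularization of a pointwise limit rather than of a supremum of a fixed family.
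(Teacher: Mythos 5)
Your proposal is correct and follows essentially the same route as the paper: boundedness of $\mathcal{F}$ forces (i)--(ii) via Bergman kernel asymptotics, multiplicativity yields (iii) because $\phi^{\mathcal F}_\lambda$ is bounded below up to a constant by some $\phi_{k,\lambda}$ (your direct sandwich argument is an equivalent rephrasing of the paper's invocation of Lemma \ref{convrew}), and concavity comes from the same supermultiplicativity inequality for the $F_{k,\lambda}$ at rational $t$, promoted to all $t$ by monotonicity of $\phi^{\mathcal F}_\lambda$ in $\lambda$ and continuity of the right-hand side. The step you flag at the end as a potential obstacle is handled in the paper exactly as you anticipate: both sides of the a.e.\ inequality are positive metrics, and an inequality between positive metrics holding off a pluripolar set holds everywhere, so there is no genuine gap.
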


\begin{proof}
  Let $\lambda$ be such that $F_{k\lambda}H^0(kL)=H^0(kL)$ for all
  $k$. Then $\phi_{k,\lambda}$ is the usual Bergman metric, and by the
  result on Bergman kernel asymptotics due to Bouche-Catlin-Tian-Zelditch (see Section \ref{sectionprelim}) we
  get that $\phi_{k,\lambda}$ converges to $\phi$. Trivially we see that if
  $F_{k\lambda}H^0(kL) = \{0\}$ for all $k$ then
  $\phi_{\lambda}^{\mathcal F}\equiv -\infty$.  By the boundedness of
  the filtration we thus have $\phi_{\lambda}^{\mathcal F}=\phi$ for
  $\lambda<-C$ and $\phi_{\lambda}^{\mathcal F}\equiv -\infty$ for
  $\lambda>C$.

  By the multiplicativity of the filtration
  $\phi_{\lambda}\equiv -\infty$ if and only if for all $k,$
  $$\mathcal{F}_{k\lambda}H^0(kL)=\{0\}.$$ Pick a $\lambda$ such that
  $\phi_{\lambda}^{\mathcal F}\not \equiv -\infty,$ then for some $k,$
  $\mathcal{F}_{k\lambda}H^0(kL)$ is non-trivial. From Lemma \ref{convrew} it follows that $\phi_{\lambda}^{\mathcal F}$ has small unbounded locus since $\phi_{k,\lambda}$ has small unbounded locus.

  It remains to prove concavity.  Let $\lambda_1,\lambda_2\in
  \mathbb{R}$ and let $t$ be a rational point in the unit interval.
  Let $m$ be a natural number such that $mt$ is an integer. Given a
  point $z\in X,$ let $s_1\in \mathcal{F}_{k\lambda_1}H^0(kL)$ and
  $s_2\in \mathcal{F}_{k\lambda_2}H^0(kL)$ be two sections with
  $\|s_1\|_{\infty}=\|s_2\|_{\infty}=1$ such that
  $$F_{k,\lambda_1}=|s_1(z)|^2$$ and $$F_{k,\lambda_2}=|s_2(z)|^2.$$
  By the multiplicativity of the filtration,
  $$s_1^{mt}s_2^{m(1-t)}\in
  F_{mk(t\lambda_1+(1-t)\lambda_2)}H^0(mkL),$$ and trivially
  $\|s_1^{mt}s_2^{m(1-t)}\|_{\infty}\leq 1$. It follows that
  $$F_{mk,t\lambda_1+(1-t)\lambda_2}(z)\geq
  F_{k,\lambda_1}(z)^{mt}F_{k,\lambda_2}(z)^{m(1-t)}.$$ Taking the
  logarithm on both sides, dividing by $mk,$ and taking the limit
  yields
  \begin{equation} \label{convconv}
    \phi_{t\lambda_1+(1-t)\lambda_2}^{\mathcal F}\geq
    t\phi_{\lambda_1}^{\mathcal F}+(1-t)\phi_{\lambda_2}^{\mathcal F}
  \end{equation}
except possibly on the pluripolar set where the limits are not equal to their upper semicontinuous regularization.  

Now if two plurisubharmonic functions $\zeta_1$ and $\zeta_2$ are equal almost everywhere then they are equal everywhere \cite[2.9.8]{Klimek}.  Applying this to $\zeta_1$ and $\max\{\zeta_1,\zeta_2\}$ we see that if $\zeta_1\ge \zeta_2$ almost everywhere then this is true everywhere.  Thus in fact  (\ref{convconv}) holds on the whole of $X$. 

  Recall that $t$ was assumed to be rational. If $\lambda_1\leq
  \lambda_2,$ the left-hand side of (\ref{convconv}) is decreasing in
  $t$ since clearly $\phi_{\lambda}^{\mathcal F}$ is decreasing in
  $\lambda$. The right-hand side of (\ref{convconv}) is continuous in
  $t,$ so it follows that the equation (\ref{convconv}) holds for all
  $t\in (0,1),$ i.e.\ $\phi_{\lambda}^{\mathcal F}$ is concave in
  $\lambda$.
\end{proof}

\begin{lemma} \label{lemma100} For any two $\phi,\psi\in
  \mathcal{H}(L)$ and any $\lambda\in \mathbb{R}$ we have
  $\phi_{\lambda}^{\mathcal F}\sim \psi_{\lambda}^{\mathcal F}$.
\end{lemma}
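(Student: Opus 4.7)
My plan is to show the equivalence directly at the finite level before passing to the limit, exploiting the fact that two smooth metrics on a compact manifold differ by a bounded function.

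\textbf{Step 1: Compare the $L^2$-norms.} Since $X$ is compact and $\phi,\psi \in \mathcal{H}(L)$ are smooth, there is a constant $C$ (depending on $\phi$ and $\psi$ but not on $k$ or $\lambda$) such that $|\phi - \psi| \leq C$ on $X$. Consequently, for any section $s \in H^0(kL)$,
$$e^{-kC}\|s\|_{k\phi}^2 \leq \|s\|_{k\psi}^2 \leq e^{kC}\|s\|_{k\phi}^2.$$
Thus the unit ball of $\mathcal{F}_{k\lambda}H^0(kL)$ with respect to $(\cdot,\cdot)_{k\phi}$ is contained in $e^{kC/2}$ times the unit ball with respect to $(\cdot,\cdot)_{k\psi}$.

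\textbf{Step 2: Use the reproducing-kernel characterization.} By the extremal characterization of the Bergman kernel already used in the proof of Lemma \ref{lemmafilt1} (equation \eqref{equation100}),
$$e^{k\phi_{k,\lambda}} = \sup\{|s|^2 : s \in \mathcal{F}_{k\lambda}H^0(kL),\ \|s\|_{k\phi}^2 \leq 1\}.$$
Combining with Step 1 yields the pointwise inequality of metrics
$$\phi_{k,\lambda} \leq \psi_{k,\lambda} + C \qquad \text{for every } k \text{ and every }\lambda,$$
where crucially $C$ is independent of both $k$ and $\lambda$.

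\textbf{Step 3: Pass to the limit.} Taking $k\to\infty$ and then the upper semicontinuous regularization (which preserves the inequality up to an additive constant), we obtain
$$\phi_{\lambda}^{\mathcal{F}} \leq \psi_{\lambda}^{\mathcal{F}} + C.$$
Interchanging the roles of $\phi$ and $\psi$ gives the reverse inequality with the same constant, hence $\phi_{\lambda}^{\mathcal{F}} \sim \psi_{\lambda}^{\mathcal{F}}$.

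I do not expect any serious obstacle here: the statement is essentially a bookkeeping consequence of the fact that the filtration $\mathcal{F}$ is fixed and only the background metric on $L$ changes, so the construction scales by a uniformly bounded factor. The only point requiring minor care is that we take the usc regularization after passing to the limit, but since the additive constant $C$ is independent of the point $x\in X$, this commutes with the regularization.
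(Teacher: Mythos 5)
Your proof is correct and takes essentially the same route as the paper: the paper splits the argument into the monotonicity observation ($\phi\leq\psi$ implies $\phi_{k,\lambda}\leq\psi_{k,\lambda}$) and the translation property ($(\phi+C)_{k,\lambda}=\phi_{k,\lambda}+C$), then combines them, whereas you get the uniform bound $\phi_{k,\lambda}\leq\psi_{k,\lambda}+C$ in one step via the extremal characterization; the content is the same.
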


\begin{proof}
  If $\phi\leq \psi$ then for all  $k$
  and $\lambda$ we have that $\phi_{k,\lambda}\leq \psi_{k,\lambda},$
  and so $\phi_{\lambda}^{\mathcal F}\leq
  \psi_{\lambda}^{\mathcal F}$. Also it is clear that
  $(\phi+C)_{k,\lambda}=\phi_{k,\lambda}+C,$ which proves the lemma.
\end{proof}

\begin{definition}
  We call the map $\lambda \mapsto [\phi_{\lambda}^{\mathcal F}]$ the
  \emph{analytic test configuration associated to the filtration}
  $\mathcal F$.
\end{definition}

So by the previous lemma this analytic test configuration depends only
on $\mathcal F$ and not on the choice of $\phi\in \mathcal H(L)$.  Our
next goal is to show the curve $\phi_{\lambda}^{\mathcal F}$ is
maximal for $\lambda<\lambda_c$, for which we will need a Skoda-type division theorem.

\begin{theorem} \label{skoda} Let $L$ be an ample line bundle. Assume
  that $L$ has a smooth positive metric $\phi$ with the property that
  $dd^c\phi \geq dd^c \phi_{K_X}$ for some smooth metric $\phi_{K_X}$ on the
  canonical bundle $K_X$. Let $\{s_i\}$ be a finite collection of
  holomorphic sections of $L$ and $m> n+2$ where $n=\dim X$.

Suppose  $s$ is a section of $mL$ such
  that $$\int_X \frac{|s|^2}{(\sum |s_i|^2)^m}dV<\infty.$$ Then there exists
  sections $h_{\alpha}\in H^0((n+1)L)$ such that
  $$s=\sum_{\alpha}h_{\alpha}s^{\alpha},$$ where $\alpha$ is a multiindex $\alpha=(\alpha_i)$ with $\sum_i \alpha_i=m-n-1, $ and $s^{\alpha}$ are the monomials $s^{\alpha}:=\Pi_i s_i^{\alpha_i}$.
\end{theorem}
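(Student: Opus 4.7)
The approach is to iteratively apply Skoda's $L^2$ division theorem $m-n-1$ times, each application peeling off one factor of the $s_i$'s at the cost of reducing the line bundle degree by one. The curvature hypothesis $dd^c\phi \geq dd^c\phi_{K_X}$ is precisely what is needed to guarantee that $qL - K_X$ admits a metric of positive curvature for every $q\ge 1$, so that Skoda's $L^2$ estimate applies with the metric $q\phi$ on $qL$.

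First I would record the compact Kähler version of Skoda's division theorem (as in Demailly): under the curvature hypothesis above, if $q \ge n+2$ and $f\in H^0(qL)$ satisfies
\begin{equation*}
\int_X \frac{|f|^2}{(\sum_i |s_i|^2)^q}\, dV < \infty,
\end{equation*}
then one can write $f = \sum_i g_i s_i$ with $g_i\in H^0((q-1)L)$ satisfying the induced $L^2$ bound
\begin{equation*}
\int_X \frac{\sum_i |g_i|^2}{(\sum_i |s_j|^2)^{q-1}}\, dV \le C \int_X \frac{|f|^2}{(\sum_i |s_i|^2)^q}\, dV
\end{equation*}
for a constant $C$ depending only on $n$ and $q$. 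The power $q=n+2$ is the borderline case (Skoda requires $\alpha>1$ with $q = \alpha \min(n,r-1)+1$), and our hypothesis $m>n+2$ is exactly what one needs to iterate down to $q=n+2$ while keeping strict inequality at each step.

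Next I would set $f_0 := s \in H^0(mL)$ and, by induction on $k=0,1,\dots,m-n-2$, construct sections $g^{(k)}_\alpha \in H^0((m-k)L)$ indexed by multiindices $\alpha$ with $|\alpha|=k$, such that
\begin{equation*}
s = \sum_{|\alpha|=k} g^{(k)}_\alpha s^\alpha, \qquad \int_X \frac{|g^{(k)}_\alpha|^2}{(\sum_i|s_i|^2)^{m-k}}\, dV < \infty.
\end{equation*}
The base case is $g^{(0)}_0 = s$, which is the given hypothesis. For the inductive step, apply the above version of Skoda's theorem to each $g^{(k)}_\alpha$ (this is legitimate because $m-k \ge n+2$ whenever $k\le m-n-2$) to obtain $g^{(k)}_\alpha = \sum_i g^{(k,i)}_\alpha s_i$ with $g^{(k,i)}_\alpha \in H^0((m-k-1)L)$, and then regroup by setting $g^{(k+1)}_\beta$ to be the sum of $g^{(k,i)}_\alpha$ over all ways $\alpha+e_i=\beta$. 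The iterated $L^2$ bound for $g^{(k+1)}_\beta$ follows from Skoda's estimate.

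Stopping at $k = m-n-1$ gives $s = \sum_{|\alpha|=m-n-1} h_\alpha s^\alpha$ with $h_\alpha := g^{(m-n-1)}_\alpha \in H^0((n+1)L)$, as required. The only real technical point is verifying that at each iteration one stays in the regime where Skoda's theorem applies with the correct integrability; this is where the hypothesis $m>n+2$ (rather than merely $m\ge n+1$) and the curvature hypothesis $dd^c\phi \ge dd^c\phi_{K_X}$ both come in. The combinatorial regrouping in the inductive step is routine, and the main obstacle is simply being careful to quote (or derive) the right form of Skoda's theorem so that the hypothesis on $\phi$ suffices.
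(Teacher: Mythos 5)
Your proposal is correct and follows essentially the same route as the paper: an iterated application of Skoda's $L^2$ division theorem (the paper cites Varolin's version, replacing $F,E,\psi,\eta$ by $kL-K_X, L, k\phi-\phi_{K_X},\phi$), peeling off one factor at each step as $k$ runs from $m$ down to $n+2$. The paper's write-up is more terse and does not spell out the multiindex regrouping or the intermediate $L^2$ bounds, but the inductive structure you describe is exactly what is implicit there.
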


\begin{proof}
Let $k$ be an integer such that $n+2\le k\le m$.  Then given a section $t\in H^0(kL)$ with $$\int_X \frac{|t|^2}{(\sum_i |s_i|^2)^k} dV<\infty$$  an application of the Skoda division theorem \cite[Thm. 2.1]{Dror} yields sections $\{t_i\}$ of $(k-1)L$ such that $ t= \sum_i t_i s_i$
and
$$\int_X \frac{|t_i|^2}{(\sum_i |s_i|^2)^{k-1}} dV<\infty.$$ 
(To apply the cited theorem replace $F,E,\psi,\eta$ with $kL -K_X, L, k\phi-\phi_{K_X}, \phi$ respectively and replace $\alpha q$ with $k-1> n+1$.)   

Now we first apply the above with $k=m$ to the section $s$, and then apply again with $k=m-1$ to each of the sections $t_i$.  Repeating this process with $k=m,m-1,\ldots,n+2$ we see that $s$ can be written as a linear sum of monomials in the $s_i$ as required.
\end{proof}

\begin{proposition} \label{lemma202}
For $\lambda$ less than the critical value $\lambda_c$,
$$\phi_{\lambda}^{\mathcal F}=\lim_{k\to \infty} \phi_{[\phi_{k,\lambda}]}.$$
\end{proposition}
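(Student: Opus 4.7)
The plan is to establish the equality by proving two pointwise inequalities, $\liminf_{k\to\infty}\phi_{[\phi_{k,\lambda}]}\ge \phi_{\lambda}^{\mathcal F}$ and $\phi_{[\phi_{k,\lambda}]}\le \phi_{\lambda}^{\mathcal F}$; the first is a direct consequence of the Bergman asymptotics, while the second will be the serious step, combining Demailly's regularization with Skoda's division theorem.

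For the \emph{lower bound}, first observe that the Bouche--Catlin--Tian--Zelditch asymptotics together with Lemma \ref{convrew} give $\phi_{k,\lambda}\le \phi+\epsilon(k)$ for some $\epsilon(k)\to 0$.  Hence the translate $\phi_{k,\lambda}-\epsilon(k)$ is bounded above by $\phi$ and is trivially equivalent to $\phi_{k,\lambda}$, so it is a candidate in the supremum defining the maximal envelope; this gives $\phi_{[\phi_{k,\lambda}]}\ge \phi_{k,\lambda}-\epsilon(k)$.  Sending $k\to\infty$ and taking the upper semicontinuous regularization yields $\liminf_{k}\phi_{[\phi_{k,\lambda}]}\ge \phi_{\lambda}^{\mathcal F}$.

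For the \emph{upper bound}, fix $k$ large enough that $dd^c(k\phi)\ge dd^c\phi_{K_X}$ and abbreviate $\Phi_{k}:=\phi_{[\phi_{k,\lambda}]}$.  For any $m$ which is a multiple of $k$ with $m/k>n+2$ I would apply Demailly's Theorem \ref{demailly} to $\Phi_k$ to produce an $L^{2}(m\Phi_k)$-orthonormal basis $\{t_j\}$ of $H^{0}(mL\otimes \mathcal I(m\Phi_k))$, setting $\Phi_{k,m}:=\frac1m\ln\sum_j|t_j|^2$ so that $\Phi_k\le \Phi_{k,m}+C/m$.  Because $\Phi_k\ge \phi_{k,\lambda}-C_k$ by definition of the envelope, each $t_j$ satisfies
$$
\int_X |t_j|^{2}\Bigl(\sum_i|s_{i,\lambda}|^{2}\Bigr)^{-m/k}dV<\infty .
$$
Skoda's Theorem \ref{skoda}, applied with $kL$ in place of $L$, then decomposes $t_j = \sum_\alpha h_\alpha\,s_{\cdot,\lambda}^{\alpha}$ with $h_\alpha\in H^{0}((n+1)kL)$ and $\sum_i\alpha_i=m/k-n-1$.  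By the multiplicativity and boundedness of $\mathcal F$ each such product lies in $\mathcal F_{m\lambda_m}H^{0}(mL)$ with $\lambda_m=\lambda-(n+1)k(\lambda+C)/m$, which increases to $\lambda$.  Since $\Phi_k\le\phi$ we have $\|t_j\|_{L^{2}(m\phi)}\le 1$, so comparing the sub-Bergman kernel of $\operatorname{span}\{t_j\}$ with the full Bergman kernel of $\mathcal F_{m\lambda_m}H^{0}(mL)$ in $L^{2}(m\phi)$ gives $\Phi_{k,m}\le \phi_{m,\lambda_m}$.  Lemma \ref{convrew} then upgrades this to $\Phi_k\le \phi_{\lambda_m}^{\mathcal F}+\epsilon(m)+C/m$, and the hypothesis $\lambda<\lambda_c$ together with the left-continuity of the test curve (Lemma \ref{leftcontlemma}) allows one to pass $m\to\infty$ and conclude $\Phi_k\le \phi_{\lambda}^{\mathcal F}$ for every sufficiently large $k$.

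The hard step is the Skoda decomposition: it is precisely what converts the analytic content of the multiplier ideal sheaf of $\Phi_k$ back into the algebraic filtration $\mathcal F_{k\lambda}H^{0}(kL)$, at the cost of a controlled $O(k/m)$ shift in the filtration level.  Once this translation is in place, everything else is routine, and the shift is absorbed in the limit by the left-continuity of the test curve, which is the reason the statement must exclude the critical value $\lambda_c$.
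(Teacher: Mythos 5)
Your lower-bound argument is correct and matches the paper's: the translate $\phi_{k,\lambda}-\epsilon(k)$ is a competitor in the envelope, so $\phi_{[\phi_{k,\lambda}]}\ge\phi_{k,\lambda}-\epsilon(k)$, and one passes to the limit. (Minor: the paper attributes $\phi_k\le\phi+\epsilon_k$ to Bernstein--Markov or the maximum principle, not to Bouche--Catlin--Tian--Zelditch, but that is immaterial.)

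The upper bound as you have written it has a genuine gap, and the inequality you invoke actually points the wrong way. You apply Demailly's approximation to $\Phi_k:=\phi_{[\phi_{k,\lambda}]}$, take $t_j\in H^0(mL\otimes\mathcal I(m\Phi_k))$, and claim $\int_X|t_j|^2\bigl(\sum_i|s_{i,\lambda}|^2\bigr)^{-m/k}dV<\infty$, citing the bound $\Phi_k\ge\phi_{k,\lambda}-C_k$. But $\Phi_k\ge\phi_{k,\lambda}-C_k$ gives $e^{-m\Phi_k}\le e^{mC_k}e^{-m\phi_{k,\lambda}}$, which makes the weight $e^{-m\Phi_k}$ \emph{smaller}, not larger, than the one you need; finiteness of $\int|t_j|^2e^{-m\Phi_k}$ therefore tells you nothing about $\int|t_j|^2e^{-m\phi_{k,\lambda}}$. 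What you would actually need is the opposite inequality $\Phi_k\le\phi_{k,\lambda}+C_k$, and this is exactly the point the paper flags as unavailable: it is not known a priori that the maximal envelope $\phi_{[\psi]}$ is \emph{equivalent} to $\psi$ (see the remark after Definition \ref{defmax2}); the envelope is a supremum over competitors each bounded by $\phi_{k,\lambda}+C_{\psi'}$, but these constants are not uniform.

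The paper circumvents this by never applying Demailly to the envelope itself. Instead it fixes a single competitor $\psi\le\min\{\phi_{k,\lambda}+C,\phi\}$, for which the crucial bound $\psi\le\phi_{k,\lambda}+C$ holds \emph{by hypothesis}, runs the Demailly--Skoda argument on that $\psi$ to obtain $\psi\le\phi_{\lambda}^{\mathcal F}$, and only then takes the supremum over $\psi$ and the limit $C\to\infty$ to conclude $P_{\phi_{k,\lambda}+C}\phi\le\phi_{\lambda}^{\mathcal F}$ and hence $\phi_{[\phi_{k,\lambda}]}\le\phi_{\lambda}^{\mathcal F}$. Your outline of the rest of the argument (multiplicativity and boundedness of $\mathcal F$ to identify the filtration level, Lemma \ref{convrew} to pass to $\phi_{\lambda'}^{\mathcal F}$, left-continuity and $\lambda<\lambda_c$ to send the shift to zero) is the right machinery, but it needs to be hung on a competitor $\psi$ in the envelope rather than on the envelope itself.
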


\begin{proof}
  Let $\phi_k:=\phi_{k,-\infty},$ i.e.\ the Bergman metric $1/k\ln
  (\sum|s_i|^2),$ where $\{s_i\}$ is an orthonormal basis for the
  whole space $H^0(kL)$ with respect to $(\cdot,\cdot)_{k\phi}$. By
  the Bernstein-Markov property of any volume form $dV$ (see e.g.\
  \cite{Witt}), or simply the maximum principle,
  \begin{equation} \label{equation101} \phi_k\leq \phi+ \epsilon_k,
  \end{equation} where $\epsilon_k$ tends to zero as $k$ tends to infinity. Since $\phi_{k,\lambda}$ is decreasing in $\lambda,$ the inequality (\ref{equation101}) still holds when $\phi_k$ is replaced by $\phi_{k,\lambda},$ i.e.\ $\phi_{k,\lambda}-\epsilon_k\leq \phi$. Therefore $\phi_{k,\lambda}-\epsilon_k$ belongs to the class of metrics the supremum of which yields $P_{[\phi_{k,\lambda}]}\phi,$ so  $$\phi_{k,\lambda}\leq P_{[\phi_{k,\lambda}]}\phi+\epsilon_k \le \phi_{[\phi_{k,\lambda}]} + \epsilon_k,$$ so letting $k$ tend to infinity
$$\phi_{\lambda}^{\mathcal F}\leq \phi_{[\phi_{k,\lambda}]}.$$

For the other inequality it is enough to show that for any constant $C,$
\begin{equation} \label{trew}
P_{\phi_{k,\lambda}+C}\, \phi\leq \phi_{\lambda}^{\mathcal F}.
\end{equation} 
By the assumption that $\lambda<\lambda_c$ it follows that $\phi_{\lambda}^{\mathcal F}\not \equiv -\infty$.
Let $\psi$ be a positive metric dominated by both $\phi_{k,\lambda}+C$
and $\phi,$ where $k$ is large enough so that $kL$ fulfills the
requirements of Theorem \ref{skoda}. We denote by $\mathcal{J}(k\psi)$ the multiplier ideal sheaf of germs of holomorphic functions locally
integrable against $e^{-k\psi}$. Let $\{s_i\}$ be an orthonormal basis
of $H^0(kL\otimes \mathcal{J}(k\psi)),$ and denote by $\psi_k$ the
Bergman metric
$$\psi_k:=\frac{1}{k}\ln (\sum |s_i|^2).$$ 
By Theorem \ref{demailly}, $$\psi\leq
\psi_k+\delta_k$$ where $\delta_k$ tends to zero as $k$ tends to
infinity, and $\psi_k$ converges pointwise to $\psi$. If $s$ lies in
$H^0(kL\otimes \mathcal{J}(k\psi)),$ specifically we must have that
$$\int_X\frac{|s|^2}{\sum|s_{i,\lambda}|^2}dV<\infty,$$ since we
assumed that $\psi$ was dominated by $\phi_{k,\lambda}+C=1/k\ln
(\sum|s_{i,\lambda}|^2)+C$. Similarly if $s$ lies in $H^0(kmL\otimes
\mathcal{J}(km\psi))$ we have
$$\int_X\frac{|s|^2}{(\sum|s_{i,\lambda}|^2)^m}dV<\infty.$$ From
Theorem \ref{skoda} applied to the sections $\{s_{i,\lambda}\}$ it
thus follows that $$s=\sum h_{\alpha}s^{\alpha},$$ where
$h_{\alpha}\in H^0(k(n+1)L),$ and the $s^{\alpha}$ are monomials in the
$\{s_{i,\lambda}\}$ of degree $m-n-1$.  Because of the
multiplicativity of the filtration each $s^{\alpha}$ lies in
$$\mathcal{F}_{k(m-n-1)\lambda} H^0(k(m-n-1)L),$$ and by the boundedness of
the filtration we also have that each $h_{\alpha}$ lies in
$$F_{-k(n+1)C}H^0(k(n+1)L)$$ for some fixed constant $C$. We thus get that $  H^0(kmL\otimes \mathcal{J}(km\psi))$ is contained in
\begin{eqnarray} \label{cfg}
(F_{-k(n+1)C}H^0(k(n+1)L))(\mathcal{F}_{k(m-n-1)\lambda}H^0(k(m-n-1)L))\nonumber\\ \subseteq \mathcal{F}_{k(m-n-1)\lambda-k(n+1)C}H^0(kmL).
\end{eqnarray}
Since we assumed that $\psi\leq \phi$ we have that $\psi_{km}$ is less than or equal to the Bergman metric using an orthonormal basis for $H^0(kmL\otimes \mathcal{J}(km\psi))$ with respect to $\phi$. Because of (\ref{cfg}) this Bergman metric is certainly less than or equal to $\phi_{km, \lambda'},$ where
$$\lambda':=\frac{1}{km}(k(m-n-1)\lambda-k(n+1)C).$$ Hence $$\psi_{km}\leq \phi_{km, \lambda'}.$$ On the other hand, by Lemma \ref{convrew} we get $$\phi_{km, \lambda'} \leq \phi_{\lambda'}^{\mathcal{F}}+\epsilon(km),$$ where $\epsilon(km)$ is a constant independent of $\lambda'$ that tends to zero as $km$ tends to infinity. Since $\lambda'$ tends
to $\lambda$ as $m$ tends to infinity this implies $\psi\leq
\lim_{\lambda'\to \lambda}\phi_{\lambda'}^{\mathcal F},$ and thus by Lemma \ref{leftcontlemma} $\psi\leq
\phi_{\lambda}^{\mathcal F}$. Taking the supremum over all such
$\psi$ completes the proof.
\end{proof}

\begin{corollary} \label{lemma203} Suppose $\mathcal F$ is a multiplicative linearly bounded filtration of $\oplus_k H^0(kL)$.  Then the associated test curve $\phi_{\lambda}^{\mathcal F}$ is maximal for $\lambda<\lambda_c$ and its Legendre transform is a geodesic ray.
\end{corollary}

\begin{proof}
  Theorem \ref{lemma3} tells us that $\phi_{[\phi_{k,\lambda}]}$ is
  maximal with respect to $\phi=\phi_{-\infty}$. By Lemma
  \ref{lemlim} it follows that this is true for the limit
  $\phi_{\lambda}^{\mathcal F} = \lim_{k\to \infty}
  \phi_{[\phi_{k,\lambda}]}$ as well. Let $\phi_{\lambda}$ be the test curve defined by $\phi_{\lambda}:=\phi_{\lambda}^{\mathcal F}$ for $\lambda<\lambda_c$ and $\phi_{\lambda}\equiv -\infty$ for $\lambda\geq \lambda_c$. Thus $\phi_{\lambda}$ is a maximal test curve, thus its Legendre transform is a geodesic ray. On the other hand, for every $\epsilon>0$ $$\phi_{\lambda}\leq\phi_{\lambda}^{\mathcal F}\leq \phi_{\lambda-\epsilon},$$ and therefore $$\widehat{\phi}_t\leq \widehat{(\phi^{\mathcal{F}})}_t\leq \widehat{\phi}_t+\epsilon t.$$ Since $\epsilon$ was arbitrary we get that the Legendre transform of $\phi_{\lambda}^{\mathcal{F}}$ coincides with that of $\phi_{\lambda},$ and thus it is a geodesic ray.
\end{proof}

\begin{remark}
  Given an analytic test configuration $[\psi_{\lambda}]$ there is a
  naturally associated filtration $\mathcal{F}$ of the section ring,
  defined as $$\mathcal{F}_{k\lambda}H^0(kL):=H^0(kL\otimes
  \mathcal{J}(k\psi_{\lambda})).$$ This filtration is bounded, but in
  general not multiplicative.
\end{remark}

\section{Filtrations associated to algebraic test configurations}
\label{filt2}

We recall briefly Donaldson's definition of a test configuration
\cite{Donaldson2,Donaldson}. In order to not confuse them with the our
analytic test configurations, we will in this article refer to them as
algebraic test configurations.

\begin{definition}
  An \emph{algebraic test configuration} $\mathcal{T}$ for an ample
  line bundle $L$ over $X$ consists of:
  \begin{enumerate}
  \item[(i)] a scheme $\mathcal{X}$ with a
    $\mathbb{C}^{\times}$-action $\rho,$
  \item[(ii)] a $\mathbb{C}^{\times}$-equivariant line bundle
    $\mathcal{L}$ over $\mathcal{X},$
  \item[(iii)] and a flat $\mathbb{C}^{\times}$-equivariant projection
    $\pi: \mathcal{X} \to \mathbb{C}$ where $\mathbb{C}^{\times}$ acts
    on $\mathbb{C}$ by multiplication, such that $\mathcal{L}$ is
    relatively ample, and such that if we denote by
    $X_1:=\pi^{-1}(1)$, then $\mathcal{L}_{|X_1} \to X_1$ is
    isomorphic to $rL \to X$ for some $r>0$.
  \end{enumerate}
\end{definition}

By rescaling we can without loss of generality assume that $r=1$ in
the definition.  An algebraic test configuration is called a \emph{product test configuration} if there is a $\mathbb{C}^{\times}$-action $\rho'$ on $L \to X$ such
that $\mathcal{L}=L\times \mathbb{C}$ with $\rho$ acting on $L$ by
$\rho'$ and on $\mathbb{C}$ by multiplication. An algebraic test
configuration is called \emph{trivial} if it is a product test configuration
with the action $\rho'$ being the trivial
$\mathbb{C}^{\times}$-action.

Since the zero-fiber $X_0:=\pi^{-1}(0)$ is invariant under the action
$\rho$, we get an induced action on the space $H^0(kL_0),$ also
denoted by $\rho,$ where we have denoted the restriction of
$\mathcal{L}$ to $X_0$ by $L_0$. Specifically, we let $\rho(\tau)$ act
on a section $s\in H^0(kL_0)$ by
\begin{equation} \label{eq21}
  (\rho(\tau)(s))(x):=\rho(\tau)(s(\rho^{-1}(\tau)(x))).
\end{equation}
By standard theory any vector space $V$ with a
$\mathbb{C}^{\times}$-action can be split into weight spaces
$V_{\lambda_i}$ on which $\rho(\tau)$ acts as multiplication by
$\tau^{\lambda_i}$, (see e.g.\ \cite{Donaldson2}). The numbers
$\lambda_i$ with non-trivial weight spaces are called the weights of
the action. Thus we may write $H^0(kL_0)$ as
$$H^0(kL_0)=\oplus_{\lambda}V_{\lambda}$$ with respect to the induced
action $\rho$.

In \cite[Lem. 4]{Sturm} Phong-Sturm give the following linear bound
on the absolute value of the weights.

\begin{lemma} \label{phong} Given a test configuration there is a
  constant $C$ such that $$|\lambda_i|<Ck$$ whenever $\dim
  V_{\lambda_i}>0$.
\end{lemma}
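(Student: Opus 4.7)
\textbf{The plan} is to exploit that the section ring of the central fibre is a finitely generated graded algebra carrying the one-parameter subgroup action, and to control its weights using those of a finite set of homogeneous generators.

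First, I would note that since $\mathcal{L}$ is $\pi$-ample, the restriction $L_0 := \mathcal{L}|_{X_0}$ is an ample line bundle on the projective scheme $X_0$. The $\mathbb{C}^{\times}$-action on $\mathcal{X}$ preserves $X_0$ and lifts equivariantly to $\mathcal{L}$, so (as in \eqref{eq21}) it acts on the section ring
$$R := \bigoplus_{k \geq 0} H^0(X_0, L_0^k)$$
by graded $\mathbb{C}$-algebra automorphisms. By ampleness, $R$ is a finitely generated $\mathbb{C}$-algebra, and since $\mathbb{C}^{\times}$ is linearly reductive with each graded piece a rational representation, one may choose a finite set of homogeneous generators consisting of weight vectors; write these as $f_j \in H^0(X_0, L_0^{d_j})$ of weight $\mu_j$ for $j = 1, \ldots, r$.

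Next, I claim that any weight $\lambda_i$ appearing in the decomposition $H^0(X_0, k L_0) = \bigoplus_\lambda V_\lambda$ is realised by some monomial $f^\alpha := f_1^{\alpha_1} \cdots f_r^{\alpha_r}$ in the chosen generators, with $\sum_j \alpha_j d_j = k$ and $\sum_j \alpha_j \mu_j = \lambda_i$. Indeed, the canonical surjection from the polynomial algebra $\mathbb{C}[f_1,\ldots,f_r]$ onto $R$ is $\mathbb{C}^{\times}$-equivariant, and the weight-$\lambda_i$ part of its weighted-degree-$k$ piece is spanned by exactly such monomials. Picking any such $\alpha$,
$$|\lambda_i| \leq \sum_j \alpha_j |\mu_j| \leq \Big(\max_j \frac{|\mu_j|}{d_j}\Big) \sum_j \alpha_j d_j = Ck,$$
where $C := \max_j |\mu_j|/d_j$ depends only on the test configuration.

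The main point needing care is the finite generation of $R$ together with the equivariant choice of generators; both are standard but worth flagging, especially since $X_0$ may be non-reduced. An alternative route would be to fix $k_0$ with $\mathcal{L}^{k_0}$ very ample relative to $\pi$, use a $\mathbb{C}^{\times}$-equivariant trivialisation of $\pi_* \mathcal{L}^{k_0}$ to produce a closed embedding $\mathcal{X} \hookrightarrow \mathbb{P}^N \times \mathbb{C}$, and then deduce the weight bound on $H^0(X_0, L_0^{m k_0})$ from the explicit weights of $H^0(\mathbb{P}^N, \mathcal{O}(m))$ via Serre vanishing; but this forces a separate treatment of residue classes of $k$ modulo $k_0$, which the generators approach avoids.
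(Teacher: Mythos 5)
The paper does not prove this lemma; it simply cites it from Phong--Sturm \cite[Lem.~4]{Sturm}, whose argument proceeds via a $\mathbb{C}^{\times}$-equivariant embedding $\mathcal{X}\hookrightarrow\mathbb{P}^N\times\mathbb{C}$ together with the explicit weight bound on $H^0(\mathbb{P}^N,\mathcal{O}(k))$ --- essentially the ``alternative route'' you sketch at the end. Your main argument, via homogeneous weight-vector generators of the section ring $R=\bigoplus_k H^0(X_0,kL_0)$, is a genuinely different and arguably cleaner route: it is intrinsic to $X_0$ and avoids both the choice of embedding and the case analysis on residues mod $k_0$.

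There is, however, a small gap. The inequality $\sum_j\alpha_j|\mu_j|\leq\bigl(\max_j|\mu_j|/d_j\bigr)\sum_j\alpha_j d_j$ implicitly assumes $d_j\geq 1$ for every generator. If $X_0$ is non-reduced, $R_0=H^0(X_0,\mathcal{O}_{X_0})$ can strictly contain $\mathbb{C}$ and its nilpotent part may carry a nontrivial $\mathbb{C}^{\times}$-weight decomposition, forcing degree-zero generators $f_j$ with $\mu_j\neq 0$. Then $|\mu_j|/d_j$ is undefined, and more to the point the free polynomial algebra already has degree-$0$ monomials $f_j^n$ of arbitrarily large weight, so a bound purely at the level of the polynomial ring cannot hold --- you must use that you are landing in $R$. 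The repair is short: choose a monomial $f^\alpha$ of degree $k$ and weight $\lambda_i$ whose image in $R$ is \emph{nonzero} (one exists, since otherwise the entire weight-$\lambda_i$ degree-$k$ part of the polynomial ring would map to zero, contradicting $V_{\lambda_i}\neq 0$), and factor $f^\alpha=f^\beta f^\gamma$ with $f^\beta$ the product of the degree-zero factors. Since $f^\alpha\neq 0$ in $R$, also $f^\beta\neq 0$, and $f^\beta$ lies in the finite-dimensional algebra $R_0$, whose finitely many weights are bounded by some $C_0$. The positive-degree factor $f^\gamma$ is controlled exactly by your estimate, giving $|\lambda_i|\leq C_0+C'k$ and hence a linear bound for $k\geq 1$. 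So your concern about non-reducedness was well placed, but it bites at the degree-zero generators, not at finite generation or the equivariant choice (both of which are indeed unproblematic).
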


In \cite{Witt2} the second author showed how to get an associated
filtration $\mathcal{F}$ of the section ring $\oplus_k H^0(kL)$ given
a test configuration $\mathcal{T}$ of $L$ which we now recall.

First note that the $\mathbb{C}^{\times}$-action $\rho$ on
$\mathcal{L}$ via the equation (\ref{eq21}) gives rise to an induced
action on $H^0(\mathcal{X}, k\mathcal{L})$ as well as
$H^0(\mathcal{X}\setminus X_0,k\mathcal{L}),$ since
$\mathcal{X}\setminus X_0$ is invariant.  Let $s\in H^0(kL)$ be a holomorphic section. Then using the
$\mathbb{C}^{\times}$-action $\rho$ we get a canonical extension
$\bar{s}\in H^0(\mathcal{X}\setminus X_0,k\mathcal{L})$ which is
invariant under the action $\rho$, simply by letting
\begin{equation} \label{toroto} \bar{s}(\rho(\tau)x):=\rho(\tau)s(x)
\end{equation} 
for any $\tau\in \mathbb{C}^{\times}$ and $x\in X$.

We identify the coordinate $z$ with the projection function $\pi(x),$
and we also consider it as a section of the trivial bundle over
$\mathcal{X}$. Exactly as for $H^0(\mathcal{X}, k\mathcal{L}),$ $\rho$
gives rise to an induced action on sections of the trivial bundle,
using the same formula (\ref{eq21}).  From this one sees
\begin{equation} \label{useful}
  (\rho(\tau)z)(x)=\rho(\tau)(z(\rho^{-1}(\tau)x)=\rho(\tau)(\tau^{-1}z(x))=\tau^{-1}z(x),
\end{equation}
where we used that $\rho$ acts on the trivial bundle by multiplication
on the $z$-coordinate. Thus $$\rho(\tau)z=\tau^{-1}z,$$ which shows
that the section $z$ has weight $-1$.

By this it follows that for any section $s\in H^0(kL)$ and any integer
$\lambda,$ we get a section $z^{-\lambda}\bar{s}\in
H^0(\mathcal{X}\setminus X_0,k\mathcal{L}),$ which has weight
$\lambda$.

\begin{lemma} \label{igenlemma} For any section $s\in H^0(kL)$ and any
  integer $\lambda$ the section $z^{-\lambda}\bar{s}$ extends to a
  meromorphic section of $k\mathcal{L}$ over the whole of
  $\mathcal{X},$ which we also will denote by $z^{-\lambda}\bar{s}$.
\end{lemma}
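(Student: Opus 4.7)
The plan is to use the relative ampleness of $\mathcal{L}$ over $\mathbb{C}$ to clear denominators by multiplying $\bar{s}$ by a sufficiently large power of $z$. Specifically, I will show that $z^N \bar{s}$ extends to a regular section of $k\mathcal{L}$ over all of $\mathcal{X}$ for some $N \geq 0$; since $z^{-\lambda}\bar{s} = z^{-\lambda - N}(z^N \bar{s})$ and $z$ is a regular function on $\mathcal{X}$ whose zero scheme is $X_0$, this immediately yields a meromorphic extension.

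First I would interpret $\bar{s}$ via the projection formula as a section of the pushforward sheaf $\pi_\ast(k\mathcal{L})$ over the open set $\mathbb{C}^\times = \mathbb{C}\setminus\{0\}$. Since $\pi$ is proper (implicit in the test configuration setup via the relative ampleness of $\mathcal{L}$) and $\mathcal{L}$ is $\pi$-relatively ample, Grothendieck's coherence theorem gives that $\pi_\ast(k\mathcal{L})$ is a coherent sheaf on the affine scheme $\mathbb{C} = \mathrm{Spec}\,\mathbb{C}[z]$. It therefore corresponds to a finitely generated $\mathbb{C}[z]$-module $M$, and sections of $\pi_\ast(k\mathcal{L})$ over $\mathbb{C}^\times$ correspond to elements of the localization $M[z^{-1}] = M \otimes_{\mathbb{C}[z]} \mathbb{C}[z,z^{-1}]$.

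Next, I would use the elementary fact that for any finitely generated $\mathbb{C}[z]$-module $M$, every element of $M[z^{-1}]$ can be written as $z^{-N} m$ for some $m \in M$ and some $N \geq 0$ (one just clears denominators using a finite generating set). Applying this to $\bar{s} \in M[z^{-1}]$ yields an $N$ such that $z^N \bar{s} \in M$, i.e.\ $z^N \bar{s}$ extends to a regular section of $k\mathcal{L}$ over the whole of $\mathcal{X}$. Therefore $z^{-\lambda}\bar{s}$ extends as a meromorphic section of $k\mathcal{L}$ with a pole of order at most $N + \lambda$ along the Cartier divisor $X_0$. Here the flatness of $\pi$ is used to guarantee that $z$ is a non-zerodivisor in $\mathcal{O}_\mathcal{X}$, so that $z^{-1}$ makes sense as a meromorphic function and the above construction genuinely produces a meromorphic section.

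The only real subtlety is the coherence of $\pi_\ast(k\mathcal{L})$, which depends on the properness of $\pi$. In the test configuration literature this properness is usually taken as part of the definition (even when stated somewhat loosely, as here); once it is granted, the rest of the argument reduces to the standard observation that localization at $z$ only introduces finitely many negative powers of $z$ when starting from a finitely generated module.
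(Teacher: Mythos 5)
Your proof is correct and takes a genuinely different route from the paper's. The paper exploits the $\mathbb{C}^{\times}$-equivariant structure: it uses flatness to show that $\pi_*(k\mathcal{L})$ is a vector bundle over $\mathbb{C}$ (hence trivial), extends $s$ to a global section $S'\in H^0(\mathcal{X},k\mathcal{L})$, decomposes $S'=\sum S'_{\lambda'}$ into $\rho$-weight spaces, and then uses the equivariance relation $\bar{s}_{\lambda'}=\tau^{\lambda'}S'_{\lambda'}$ to conclude that $z^{\lambda}\bar{s}$ extends holomorphically once $\lambda\geq \max(-\lambda')$. Your argument bypasses the $\mathbb{C}^{\times}$-action entirely: you regard $\bar{s}$ directly as an element of $\Gamma(\mathbb{C}^{\times},\pi_*(k\mathcal{L}))\cong M[z^{-1}]$, where $M=\Gamma(\mathbb{C},\pi_*(k\mathcal{L}))$ is a $\mathbb{C}[z]$-module, and then clear the denominator. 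Both are valid, and both ultimately rest on the same input (properness to get control of $\pi_*$, flatness to make $z$ a non-zerodivisor), but the paper's weight-space decomposition carries extra structural information that is reused later in Lemma \ref{lemma200}, whereas your approach is more economical for proving this lemma in isolation.

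Two small remarks. First, the "elementary fact'' you invoke is actually a tautology: by the very definition of the localization $M[z^{-1}]$, every element is of the form $z^{-N}m$ with $m\in M$ and $N\geq 0$; finite generation of $M$ (i.e.\ coherence of $\pi_*(k\mathcal{L})$) is therefore not needed for this step, only quasi-coherence, so you could drop the appeal to Grothendieck's coherence theorem. Second, both your proof and the paper's implicitly use that the orbit-extended section $\bar{s}$ is an \emph{algebraic} (regular) section over $\mathcal{X}\setminus X_0$, so that it genuinely lives in $\Gamma(\mathbb{C}^{\times},\pi_*(k\mathcal{L}))$ in the scheme-theoretic sense; this holds because the $\mathbb{C}^{\times}$-action $\rho$ is an algebraic action and the orbit map $\mathbb{C}^{\times}\times X\to\mathcal{X}\setminus X_0$ is an algebraic isomorphism, but it is worth making explicit.
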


\begin{proof}
  It is equivalent to saying that for any section $s$ there exists an
  integer $\lambda$ such that $z^{\lambda}\bar{s}$ extends to a
  holomorphic section $S\in H^0(\mathcal{X},k\mathcal{L})$. By
  flatness, which was assumed in the definition of a test
  configuration, the direct image bundle $\pi_*\mathcal{L}$ is in fact
  a vector bundle over $\mathbb{C}$. Thus it is trivial, since any
  vector bundle over $\mathbb{C}$ is trivial. Therefore there exists a
  global section $S'\in H^0(\mathcal{X},k\mathcal{L})$ such that
  $s=S'_{|X}$. On the other hand, as for $H^0(kL_0),$
  $H^0(\mathcal{X},k\mathcal{L})$ may be decomposed as a direct sum of
  invariant subspaces $W_{\lambda'}$ such that $\rho(\tau)$ restricted
  to $W_{\lambda'}$ acts as multiplication by $\tau^{\lambda'}$. Let
  us write
  \begin{equation} \label{decomp} S'=\sum S'_{\lambda'},
  \end{equation}
  where $S_{\lambda'}\in W_{\lambda'}$. Restricting the equation
  (\ref{decomp}) to $X$ gives a decomposition of $s,$ $$s=\sum
  s_{\lambda'},$$ where $s_{\lambda'}:={S'_{\lambda'}}_{|X}$. From
  (\ref{toroto}) and the fact that $S'_{\lambda'}$ lies in
  $W_{\lambda'}$ we get that for $x\in X$ and $\tau\in
  \mathbb{C}^{\times}$
  \begin{eqnarray*}
    \bar{s}_{\lambda'}(\rho(\tau)(x))=\rho(\tau)(s_{\lambda'}(x))=\rho(\tau)(S'_{\lambda'}(x))=(\rho(\tau)S'_{\lambda'})(\rho(\tau)(x)))=\\ =\tau^{\lambda'}S'_{\lambda'}(\rho(\tau)(x)),
  \end{eqnarray*} 
  and therefore $\bar{s}_{\lambda'}=\tau^{\lambda'}S'_{\lambda'}$.
  Since trivially $$\bar{s}=\sum \bar{s}_{\lambda'}$$ it follows that
  $t^{\lambda}\bar{s}$ extends holomorphically as long as $\lambda\geq
  \max -\lambda'$.
\end{proof}

\begin{definition}
  Given a test configuration $\mathcal{T}$ we define a vector
  space-valued map $\mathcal{F}$ from $\mathbb{Z}\times \mathbb{N}$ by
  letting $$(\lambda,k) \longmapsto \{s\in H^0(kL):
  z^{-\lambda}\bar{s}\in
  H^0(\mathcal{X},k\mathcal{L})\}=:\mathcal{F}_{\lambda}H^0(kL).$$
\end{definition}
It is immediate that $\mathcal{F}_{\lambda}$ is decreasing since
$H^0(\mathcal{X},k\mathcal{L})$ is a $\mathbb{C}[z]$-module. We can
extend $\mathcal{F}$ to a filtration by letting
$$\mathcal{F}_{\lambda}H^0(kL):=\mathcal{F}_{\lceil\lambda\rceil}H^0(kL)$$
for non-integers $\lambda,$ thus making $\mathcal{F}$ left-continuous.
Since
$$z^{-(\lambda+\lambda')}\overline{ss'}=(z^{-\lambda}\bar{s})(z^{-\lambda'}\bar{s'})\in
H^0(\mathcal{X},k\mathcal{L})H^0(\mathcal{X},m\mathcal{L})\subseteq
H^0(\mathcal{X},(k+m)\mathcal{L})$$ whenever $s\in
\mathcal{F}_{\lambda}H^0(kL)$ and $s'\in
\mathcal{F}_{\lambda'}H^0(kL),$ we see that $$(\mathcal{F}_{\lambda}
H^0(kL))(\mathcal{F}_{\lambda'} H^0(mL))\subseteq
\mathcal{F}_{\lambda+\lambda'}H^0((k+m)L),$$ i.e.\ $\mathcal{F}$ is
multiplicative.

Recall that we had the decomposition of $H^0(kL_0)$ into weight spaces
$V_{\lambda}$.

\begin{lemma} \label{lemma200} For each $\lambda,$ we have that $$\dim
  F_{\lambda}H^0(kL)=\sum_{\lambda'\geq \lambda} \dim V_{\lambda'}.$$
\end{lemma}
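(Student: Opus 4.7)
The plan is to exploit the $\mathbb{C}[z]$-module structure on $H^0(\mathcal{X}, k\mathcal{L})$ together with the $\mathbb{C}^\times$-action, and relate both to $F_\lambda H^0(kL)$ by restriction. By flatness of $\pi$ the pushforward $\pi_*(k\mathcal{L})$ is a vector bundle on $\mathbb{C}$ of rank $N = \dim H^0(kL)$, and every such bundle is trivial (as already used in Lemma \ref{igenlemma}). Hence $H^0(\mathcal{X}, k\mathcal{L})$ is a free $\mathbb{C}[z]$-module of rank $N$, and because this module carries a compatible $\mathbb{C}^\times$-action (with $z$ of weight $-1$), the structure theorem for graded modules over $\mathbb{C}[z]$ provides a basis $e_1,\dots,e_N$ in which each $e_i$ is a weight vector of some weight $\mu_i$; thus $H^0(\mathcal{X}, k\mathcal{L}) = \bigoplus_{i=1}^N \mathbb{C}[z] \cdot e_i$ with $z^n e_i$ of weight $\mu_i - n$.

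From this, I can compute both sides of the claimed identity. Writing $W_\lambda$ for the weight-$\lambda$ subspace of $H^0(\mathcal{X}, k\mathcal{L})$, a monomial $z^n e_i$ lies in $W_\lambda$ iff $\mu_i - n = \lambda$ with $n\ge 0$, so $W_\lambda$ has basis $\{z^{\mu_i-\lambda} e_i : \mu_i \ge \lambda\}$ and $\dim W_\lambda = \#\{i : \mu_i \ge \lambda\}$. Restricting to $X_0$ kills all $e_i$ with $\mu_i > \lambda$ (since then $z^{\mu_i-\lambda}|_{X_0}=0$), while the $e_i|_{X_0}$ with $\mu_i = \lambda$ survive and are weight-$\lambda$ vectors in $V_\lambda$. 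Triviality of $\pi_*(k\mathcal{L})$ gives that $\{e_i|_{X_0}\}$ is a basis of $H^0(kL_0)$, so $\dim V_{\lambda'} = \#\{i : \mu_i = \lambda'\}$ and hence
$$\dim W_\lambda = \#\{i : \mu_i \ge \lambda\} = \sum_{\lambda' \ge \lambda} \dim V_{\lambda'}.$$

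It then remains to identify $W_\lambda$ with $F_\lambda H^0(kL)$ via the restriction map $S \mapsto s := S|_{X_1}$. If $S \in W_\lambda$, then $z^{\lambda}S$ is $\mathbb{C}^\times$-invariant on $\mathcal{X}\setminus X_0$, and so is the canonical extension $\bar{s}$ defined by \eqref{toroto}; both agree on $X_1$ (where $z \equiv 1$), so by $\mathbb{C}^\times$-equivariance they agree on all of $\mathcal{X}\setminus X_0 = \mathbb{C}^\times \cdot X_1$. Therefore $S = z^{-\lambda}\bar{s}$ extends holomorphically across $X_0$, which is exactly the condition $s \in F_\lambda H^0(kL)$. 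Conversely, if $s \in F_\lambda H^0(kL)$, then $z^{-\lambda}\bar{s} \in H^0(\mathcal{X}, k\mathcal{L})$ has weight $\lambda$ (since $\bar{s}$ is invariant of weight $0$ and $z^{-\lambda}$ has weight $\lambda$) and restricts to $s$; injectivity of the restriction is automatic, since a section vanishing on $X_1$ vanishes on the dense $\mathbb{C}^\times$-orbit $\mathcal{X}\setminus X_0$ by equivariance, and hence globally.

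The only potentially delicate point is the choice of a weight-homogeneous $\mathbb{C}[z]$-basis $\{e_i\}$ in step one; this is the structure theorem for finitely generated graded free modules over the graded ring $\mathbb{C}[z]$, but if one wishes to avoid citing it, one may argue directly by decomposing $H^0(\mathcal{X}, k\mathcal{L}) = \bigoplus_\mu W_\mu$ into weight spaces and observing that the map $\bigoplus_{\mu \le \mu_0} W_\mu / z \cdot \bigoplus_{\mu \le \mu_0} W_\mu \to H^0(kL_0)$ lets one lift a weight basis of $V_{\mu_0}$ to generators in $W_{\mu_0}$.
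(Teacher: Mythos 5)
Your proof is correct. The underlying facts are the same as the paper's --- flatness gives that $H^0(\mathcal{X},k\mathcal{L})=\Gamma(\mathbb{C},\pi_*k\mathcal{L})$ is a free $\mathbb{C}[z]$-module of rank $N$, and the weight-$\lambda$ piece $W_\lambda$ is identified with $\mathcal{F}_\lambda H^0(kL)$ by restriction to $X_1$ exactly as in Lemma \ref{igenlemma} --- but the organization differs. The paper avoids choosing any basis: it uses the standard isomorphism $H^0(kL_0)\cong H^0(\mathcal{X},k\mathcal{L})/zH^0(\mathcal{X},k\mathcal{L})$ (valid for $k\gg 0$, by base change), takes the weight-$\lambda$ part of both sides to get $V_\lambda\cong W_\lambda/zW_{\lambda+1}\cong \mathcal{F}_\lambda H^0(kL)/\mathcal{F}_{\lambda+1}H^0(kL)$, and then telescopes. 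You instead manufacture an explicit weight-homogeneous free $\mathbb{C}[z]$-basis $\{e_i\}$ with weights $\mu_i$ and read off $\dim W_\lambda=\#\{i:\mu_i\ge\lambda\}$ and $\dim V_{\lambda'}=\#\{i:\mu_i=\lambda'\}$ directly. Both are valid; your route is more concrete but forces you to invoke (and justify) the existence of the homogeneous basis --- which, as you yourself flag, is the one step where care is needed, since the weight grading on $\mathbb{C}[z]$ is unbounded below. Your fallback argument in the last paragraph (lift a weight basis of the quotient modulo $z$, using that the grading is bounded \emph{above}) is the right way to do this, and is essentially a graded Nakayama argument. One small omission: as in the paper, you should note that $\{e_i|_{X_0}\}$ being a basis of $H^0(kL_0)$ requires $k\gg 0$ so that base change identifies $(\pi_*k\mathcal{L})_{|\{0\}}$ with $H^0(kL_0)$. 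The paper's quotient-module route bypasses the basis-construction step entirely, which makes it the cleaner of the two.
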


\begin{proof}
  We have the following isomorphism:
  \begin{equation*}
    (\pi_*k\mathcal{L})_{|\{0\}}\cong H^0(\mathcal{X},k\mathcal{L})/zH^0(\mathcal{X},k\mathcal{L}),
  \end{equation*}
  the right-to-left arrow being given by the restriction map (see e.g.\
  \cite[p12]{Ross2}). Also, for $k\gg 0$,
  $(\pi_*k\mathcal{L})_{|\{0\}}=H^0(kL_0),$ therefore for
  large $k$
  \begin{equation} \label{isohej} H^0(kL_0)\cong
    H^0(\mathcal{X},k\mathcal{L})/zH^0(\mathcal{X},k\mathcal{L}),
  \end{equation}
  We also had a decomposition of $H^0(\mathcal{X},k\mathcal{L})$ into
  the sum of its invariant weight spaces $W_{\lambda}$. By Lemma
  \ref{igenlemma} it is clear that a section $S\in
  H^0(\mathcal{X},k\mathcal{L})$ lies in $W_{\lambda}$ if and only if
  it can be written as $z^{-\lambda}\bar{s}$ for some $s\in H^0(kL),$
  in fact we have that $s=S_{|X}$. Thus $$W_{\lambda}\cong
  \mathcal{F}_{\lambda}H^0(kL),$$ and by the isomorphism
  (\ref{isohej}) then $$V_{\lambda}\cong
  \mathcal{F}_{\lambda}H^0(kL)/\mathcal{F}_{\lambda+1}H^0(kL).$$ Therefore
  \begin{equation} \label{slutnej} \dim
    \mathcal{F}_{\lambda}H^0(kL)=\sum_{\lambda'\geq \lambda}\dim
    V_{\lambda'}.
  \end{equation}
\end{proof}

Using Lemma \ref{lemma200} together with Lemma \ref{phong} shows that
the filtration $\mathcal{F}$ is bounded.

\section{The geodesic rays of Phong and
  Sturm}\label{sectionphongsturm}

In \cite{Sturm} Phong-Sturm show how to construct a weak geodesic ray, starting with a $\phi\in \mathcal{H}(L)$ and an algebraic test configuration $\mathcal{T}$ (see also \cite{Zelditch} for how this works in the toric setting).  In the previous section we showed how to associate an analytic test configuration $[\phi^{\mathcal{F}}_{\lambda}]$ to an algebraic test configuration, and thus get a weak geodesic using the Legendre transform of its maximal envelope.  Recall by Proposition \ref{vasd} this geodesic is the same as the Legendre transform of the original test curve $\phi^{\mathcal{F}}_{\lambda}$. The goal in this section is to prove that this ray coincides with the one constructed by Phong-Sturm.\medskip

To describe what we aim to show, recall that if $V$ is a vector space with a scalar product, and $\mathcal{F}$ is a
filtration of $V,$ there is a unique decomposition of $V$ into a
direct sum of mutually orthogonal subspaces $V_{\lambda_i}$ such that
$$\mathcal{F}_{\lambda}V=\oplus_{\lambda_i\geq
  \lambda}V_{\lambda_i}.$$ Furthermore we allow for
$\lambda_i$ to be equal to $\lambda_j$ even when $i\neq j,$ so we
can assume that all the subspaces $V_{\lambda_i}$ are one dimensional. 
This additional decomposition is of course not unique, but it will not
matter in what follows.

Let $\phi\in \mathcal{H}(L)$ and $H^0(kL)=\oplus V_{\lambda_i}$ be
the decomposition of $H^0(kL)$ with respect to the scalar product
$(\cdot,\cdot)_{k\phi}$ coming from the volume form $(dd^c\phi)^n$.  Consider next the filtration coming from an algebraic
test configuration  (note that then the collection of $\lambda_i$ will depend also on $k$ but we omit that from our notation) and define the normalized weights to be
$$ \bar{\lambda}_i := \frac{\lambda_i}{k},$$
which form a bounded family by Lemma \ref{phong}.

 Now if  $s_i$ is a vector of unit length in $V_{\lambda_i},$ then $\{s_i\}$ will be an orthonormal basis for $H^0(kL)$. Since the filtration $\mathcal{F}$ encodes the $\mathbb{C}^*$-action on $H^0(k\mathcal{L})$ it is easy to see that the basis $\{s_i\}$ is the same one as in \cite[Lem 7]{Sturm}.     In terms of the notation in the previous sections
$$\phi_{k,\lambda} = \frac{1}{k}\ln (\sum_{\lambda_i\ge k\lambda} |s_i|^2) \quad \text{ and } \quad \phi^{\mathcal F}_{\lambda} = {\lim}^*_{k\to \infty} \phi_{k,\lambda}.$$

\begin{definition}
Let
$$\Phi_k(t):=\frac{1}{k}\ln(\sum_i e^{t\lambda_i}|s_i|^2)$$
The \emph{Phong-Sturm} ray is the limit
\begin{equation}
\Phi(t):= {\lim}^*_{k\to\infty}(\sup_{l\geq k}\Phi_l(t)).\label{equationps}
\end{equation}

\end{definition}

Our goal is the following:
\begin{theorem}\label{theoremps}
  Let $\phi^{\mathcal F}$ be the analytic test configuration associated to the filtration $\mathcal F$ from a test configuration.   Then 
$$\Phi(t) =\widehat{(\phi^{\mathcal F})}_t.$$
\end{theorem}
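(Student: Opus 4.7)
The plan is to compare the two sequences at level $k$ directly, showing that up to errors tending to zero as $k\to\infty$,
$$\Phi_k(t) = \sup_{\lambda\in\mathbb R}\{t\lambda + \phi_{k,\lambda}\} + O\!\left(\tfrac{\log k}{k}\right),$$
and then pass to the limit $k\to\infty$. Since the right hand side is a finite-$k$ analog of $\widehat{\phi^{\mathcal F}}_t$, and $\phi_{k,\lambda}$ is known to converge to $\phi^{\mathcal F}_\lambda$ (Lemma \ref{lemmafilt1}) in a controlled way (Lemma \ref{convrew}), the theorem will follow.

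For the finite-$k$ identity, fix $t>0$. Since the weights $\lambda_i$ are integers, grouping $\sum_i e^{t\lambda_i}|s_i|^2$ by weight and writing $S_m := \sum_{\lambda_i\ge m}|s_i|^2 = e^{k\phi_{k,m/k}}$, a simple Abel/telescoping computation gives
$$\sum_i e^{t\lambda_i}|s_i|^2 \;=\; (1-e^{-t})\sum_m e^{k(tm/k + \phi_{k,m/k})}.$$
By Lemma \ref{phong} there are $N_k = O(k)$ nonzero terms in the sum on the right, and sandwiching $\sum$ between $\max$ and $N_k\cdot\max$ yields
$$\sup_\lambda\{t\lambda + \phi_{k,\lambda}\} + \tfrac{\log(1-e^{-t})}{k}\;\le\;\Phi_k(t)\;\le\;\sup_\lambda\{t\lambda + \phi_{k,\lambda}\} + \tfrac{\log N_k}{k},$$
where the $\lambda$-sup is attained at some value of the form $\lambda = m/k$ because $\phi_{k,\lambda}$ is left-continuous and piecewise constant with jumps at integer multiples of $1/k$. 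For fixed $t>0$, both error terms vanish as $k\to\infty$ uniformly in $x$.

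For the upper bound on $\Phi(t)$, Lemma \ref{convrew} gives $\phi_{k,\lambda}\le \phi^{\mathcal F}_\lambda + \epsilon(k)$ uniformly in $\lambda$ with $\epsilon(k)\to 0$, so $\Phi_k(t)\le \widehat{\phi^{\mathcal F}}_t + o(1)$; applying $(\sup_{l\ge k}\cdot)^*$ and letting $k\to\infty$ yields $\Phi(t)\le \widehat{\phi^{\mathcal F}}_t$. For the lower bound, discarding the positive summands with $\lambda_i < k\lambda$ in $\Phi_k(t)$ gives the trivial inequality $\Phi_k(t)\ge t\lambda + \phi_{k,\lambda}$ for every $\lambda\in\mathbb R$ and $t\ge 0$. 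By Lemma \ref{lemmafilt1} the pointwise limit $\lim_k \phi_{k,\lambda}$ exists and its usc regularization equals $\phi^{\mathcal F}_\lambda$, so $\limsup_k \Phi_k(t)\ge t\lambda + \lim_k\phi_{k,\lambda}$ pointwise. Passing to the usc regularization on both sides, and using $(t\lambda + f)^* = t\lambda + f^*$, gives $\Phi(t)\ge t\lambda + \phi^{\mathcal F}_\lambda$ for every $\lambda$; taking the supremum over $\lambda$ and the fact that $\Phi(t)$ is already usc then gives $\Phi(t)\ge \widehat{\phi^{\mathcal F}}_t$. The case $t=0$ is immediate, since $\Phi_k(0)$ is the ordinary Bergman metric associated to $\phi$ and converges uniformly to $\phi = \widehat{\phi^{\mathcal F}}_0$ by Bouche-Catlin-Tian-Zelditch.

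The main obstacle I anticipate is purely the bookkeeping with the upper semicontinuous regularizations and pluripolar-negligible sets when passing to the limit in both directions, rather than any conceptual difficulty: once the finite-$k$ identity above is in hand, everything is controlled by the uniform comparison of Lemma \ref{convrew} and the a.e.\ convergence $\phi_{k,\lambda}\to\phi^{\mathcal F}_\lambda$ of Lemma \ref{lemmafilt1}.
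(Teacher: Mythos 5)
Your proof is correct, and the lower bound is handled by a genuinely different and simpler route than the paper's.

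Both proofs share the same upper bound: a finite-$k$ sandwich showing $\Phi_k(t) \le \sup_\lambda\{t\lambda + \phi_{k,\lambda}\} + O(\log k/k)$, combined with the uniform comparison $\phi_{k,\lambda}\le\phi^{\mathcal F}_\lambda+\epsilon(k)$ from Lemma \ref{convrew}. The paper's Lemma \ref{lemmafineq} does this with a direct max-vs-sum bound; you do it via Abel summation, which is equivalent. (One small inaccuracy: after the Abel transform, the sum $\sum_m e^{tm}S_m$ has infinitely many nonzero terms, since $S_m$ is a nonzero constant for all $m$ below the smallest weight. The tail is a convergent geometric series contributing only an extra $(1-e^{-t})^{-1}$ to the multiplicative constant, so your $O(\log k/k)$ conclusion is unaffected for each fixed $t>0$, but the phrase ``$N_k$ nonzero terms in the sum on the right'' is not literally true.)

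The interesting divergence is in the lower bound. The paper works with $\max_i\{\phi_{l,\bar\lambda_i}+t\bar\lambda_i\}$, a maximum over the discrete set of normalized weights only, and is therefore forced to show that the $\lambda$'s competing in the Legendre supremum can be well-approximated by normalized weights. This leads to a case split (trivial vs.\ nontrivial analytic test configuration), and in the borderline case invokes Lemma \ref{lemmatrivialanalytic}, whose proof relies on the concave transform of a test configuration and the Okounkov body machinery of \cite{Witt2}. You instead notice the elementary pointwise inequality $\Phi_l(t)\ge t\lambda+\phi_{l,\lambda}$, which holds for \emph{every} real $\lambda$, not only for normalized weights, simply by discarding summands and using $e^{t\lambda_i}\ge e^{tl\lambda}$ when $\lambda_i\ge l\lambda$ and $t\ge 0$. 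Passing to the limit via $\sup_{l\ge k}\phi_{l,\lambda}\ge\lim_l\phi_{l,\lambda}$, taking usc regularizations (using that $\Phi(t)$ is a decreasing limit of usc functions, hence usc), and then the supremum over $\lambda$, gives $\Phi(t)\ge\widehat{\phi^{\mathcal F}}_t$ with no density-of-weights argument and no case analysis at all. This avoids the reliance on Lemma \ref{lemmatrivialanalytic} entirely and makes the lower bound self-contained. The trade-off is negligible: the paper's approach makes visible the role of the normalized weights and the distinction between trivial and nontrivial configurations, but yours is shorter and requires less imported machinery.
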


In particular, the results from the previous section yield another proof of \cite[Thm 1]{Sturm} which says that $\Phi(t)$ is a weak geodesic ray emanating from $\phi$.

\begin{lemma} \label{lemmafineq}
  \begin{equation}
    \Phi(t)={\lim}^*_{k\to \infty}(\sup_{l\ge k}\Phi_l(t))={\lim}^*_{k\to\infty} (\sup_{l\ge k} \max_i\{\phi_{l,\bar{\lambda}_i}+t \bar{\lambda_i}\}).
  \end{equation} 
\end{lemma}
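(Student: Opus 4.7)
The plan is to prove the second equality by showing pointwise estimates sandwiching $\Phi_l(t)$ between $\max_i\{\phi_{l,\bar\lambda_i}+t\bar\lambda_i\}$ and the same quantity plus a correction that vanishes uniformly as $l\to\infty$, then pushing this through the upper-semicontinuous regularization and the limit in $k$.

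First, fix $l$ and $t\ge 0$. For each weight $\lambda_j$, I isolate the terms in the Bergman sum with $\lambda_i\ge\lambda_j$ and use $e^{t\lambda_i}\ge e^{t\lambda_j}$ (here we crucially need $t\ge 0$) to get
\[
\sum_i e^{t\lambda_i}|s_i|^2 \;\ge\; \sum_{\lambda_i\ge \lambda_j}e^{t\lambda_i}|s_i|^2 \;\ge\; e^{t\lambda_j}\sum_{\lambda_i\ge \lambda_j}|s_i|^2,
\]
which, after taking $\frac1l\ln$, yields the lower bound $\Phi_l(t)\ge \phi_{l,\bar\lambda_j}+t\bar\lambda_j$ for every $j$, hence $\Phi_l(t)\ge \max_j\{\phi_{l,\bar\lambda_j}+t\bar\lambda_j\}$.

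For the upper bound, I group the sum by the distinct values $\lambda_{(1)}>\cdots>\lambda_{(d_l)}$ of the weights. Bounding a sum of $d_l$ non-negative terms by $d_l$ times the maximum gives
\[
\sum_i e^{t\lambda_i}|s_i|^2 \;\le\; d_l\cdot \max_k\Bigl\{e^{t\lambda_{(k)}}\!\!\!\sum_{\lambda_i=\lambda_{(k)}}\!\!|s_i|^2\Bigr\} \;\le\; d_l\cdot \max_k\Bigl\{e^{t\lambda_{(k)}}\!\!\!\sum_{\lambda_i\ge \lambda_{(k)}}\!\!|s_i|^2\Bigr\},
\]
which after $\frac1l\ln$ reads $\Phi_l(t)\le \max_j\{\phi_{l,\bar\lambda_j}+t\bar\lambda_j\}+\varepsilon_l$ with $\varepsilon_l:=\frac{1}{l}\ln d_l$. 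Since $d_l\le \dim H^0(lL)=O(l^n)$, we have $\varepsilon_l\to 0$ as $l\to \infty$.

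Combining the two bounds,
\[
\max_i\{\phi_{l,\bar\lambda_i}+t\bar\lambda_i\}\;\le\;\Phi_l(t)\;\le\;\max_i\{\phi_{l,\bar\lambda_i}+t\bar\lambda_i\}+\varepsilon_l.
\]
Taking $\sup_{l\ge k}$ and setting $\eta_k:=\sup_{l\ge k}\varepsilon_l$ (which still tends to zero), the supremum on the left is at most the middle supremum, which is at most the right-hand supremum plus $\eta_k$. Since adding the constant $\eta_k$ commutes with upper-semicontinuous regularization, the same sandwich holds after applying the star. Finally, as $k\to\infty$ the error $\eta_k\to 0$, so the two limits agree, which is exactly the claimed identity. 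The only substantive ingredients are the sign of $t$ (used to get the lower bound by dropping tail terms) and the polynomial growth of $\dim H^0(lL)$ (used to absorb the combinatorial factor); no step is really an obstacle here, this is essentially a $\log$-sum-exp $\approx$ max up to an error of size $\frac{\log(\text{number of terms})}{l}$.
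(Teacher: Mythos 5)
Your proof is correct, and it follows the same basic philosophy as the paper's: a pointwise $\log$-sum-exp versus max comparison, with the polynomial growth of $\dim H^0(lL)$ absorbing the combinatorial factor $\ln(\text{number of terms})/l$, and then pushing a uniform two-sided error through $\sup_{l\ge k}$, the usc regularization, and the limit in $k$. The one structural difference is worth noting. The paper's proof first sandwiches $\Phi_l(t)$ between $\max_i a_{l,i}$ and $\max_i a_{l,i}+\frac{\ln|I_l|}{l}$, where $a_{l,i}=\frac{1}{l}\ln|s_i|^2+t\bar\lambda_i$ is the single-term quantity, and then separately establishes $\max_i a_{l,i}\le\max_i b_{l,i}\le\max_i a_{l,i}+\frac{\ln|I_l|}{l}$ where $b_{l,i}=\phi_{l,\bar\lambda_i}+t\bar\lambda_i$ is the cumulative quantity; the final sandwich therefore has an error of $\pm\frac{\ln|I_l|}{l}$ on both sides. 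You instead compare $\Phi_l(t)$ directly to $\max_i b_{l,i}$: the lower bound $\Phi_l(t)\ge\max_j\{\phi_{l,\bar\lambda_j}+t\bar\lambda_j\}$ follows exactly (no error term) by dropping the terms with $\lambda_i<\lambda_j$ and using $e^{t\lambda_i}\ge e^{t\lambda_j}$ (which, as you correctly flag, is the only place where $t\ge 0$ enters), and the upper bound comes from grouping by distinct weights. This bypasses the auxiliary comparison between $\max a$ and $\max b$ and yields a one-sided error, which is a genuine, if modest, simplification. Everything else — in particular the observation that adding a constant $\eta_k$ commutes with taking $(\cdot)^*$, and that $\eta_k=\sup_{l\ge k}\varepsilon_l\to 0$ — is correctly handled.
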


\begin{proof}
Our proof will be based on the elementary fact that if $\{a_{l,i} : i\in I_l\}$ is a set of real numbers then
\begin{equation}\label{eq500} 
 \max_{i\in I_l} a_{l,i} \le \frac{1}{l} \ln  \sum_{i\in I_l} e^{l a_{l.i}} \le  \max_{i\in I_l} a_{l,i}  + \frac{1}{l} \ln |I_l|.
\end{equation}
Now pick $x\in X$ and $t> 0$. Let
  $$a_{l,i}:=\frac{1}{l} \ln|s_i(x)|^2+ t\bar{\lambda}_i$$
and $I_l$ be the indexing set for the $\lambda_i$.  Then $|I_l| = O(l^n)$ and
$$\Phi_l(t) =  \frac{1}{l} \ln \left(\sum_i e^{l a_{l,i}}\right).$$
Thus by \eqref{eq500}
\begin{equation}
  \label{eq:fineq1}
\max_i\{a_{l,i}\} \le \Phi_l(t) \le \max_i\{a_{l,i}\} + \frac{|I_l|}{l}.
\end{equation}
Now set
$$b_{l,i}:=\phi_{l,\bar{\lambda}_i} + t\bar{\lambda}_i= \frac{1}{l} \ln \sum_{\lambda_j\ge \lambda_i} |s_j(x)|^2+t\bar{\lambda}_i.$$
For fixed $i$, pick any $j_0$ such that
$$ \max_{\lambda_j \ge \lambda_i} |s_j(x)|^2 = |s_{j_0}|^2 \quad \text{ and } \quad \lambda_{j_0} \ge \lambda_i.$$
Then
$$b_{l,i} \le \frac{1}{l} \ln(|I_l| |s_{j_0}|^2+ t \bar{\lambda}_i  \le \frac{1}{l} \ln |s_{j_0}|^2+t\bar{\lambda}_{j_0}  + \frac{\ln |I_l|}{l} = a_{j_0,l} + \frac{\ln |I_l|}{l}.$$
Clearly $a_{l,i}\le b_{l,i}$ for all $i$, so we in fact have
$$\max_i\{a_{l,i}\}\le \max\{b_{l,i}\}\le \max_i\{a_{l,i}\}+ \frac{\ln |I_l|}{l},$$
which combined with \eqref{eq:fineq1} yields
$$\max_i\{b_{l,i}\} - \frac{\ln |I_l|}{l} \le \Phi_l(t) \le \max_i\{b_{l,i}\} + \frac{\ln |I_l|}{l}.$$
Now taking the supremum over all $l\ge k$ followed by the upper semicontinuous regularization and then the limit as $k$ tends to infinity gives the result since $k^{-1}\ln |I_k|$ tends to zero.
\end{proof}

\begin{proof}[Proof of Theorem \ref{theoremps}]
From Lemma \ref{convrew}  there is a constant $\epsilon(l)$ such that
$$ \phi_{l,\bar{\lambda}_i} + t\bar{\lambda}_i \le \phi^{\mathcal F}_{\bar{\lambda}_i} + t\bar{\lambda}_i  + \epsilon(l),$$
where $\epsilon(l)$ is independent of $\lambda_i$ and tends to zero as $l$ tends to infinity.  Thus
$$\max_i\{\phi_{l,\bar{\lambda}_i} + t\bar{\lambda}_i\} \le \sup_{\lambda} \{ \phi_{\lambda}^{\mathcal F} + t\lambda\} + \epsilon(l),$$
and so
$$ {\lim}^*_{k\to \infty} {\sup}_{l\ge k} \max_i\{\phi_{l,\bar{\lambda}_i} + t\bar{\lambda}_i\} \le (\widehat{\phi^{\mathcal F}})_t + \lim_{k\to \infty}\sup_{l\ge k} \epsilon(l),$$
so using Lemma \ref{lemmafineq} gives 
$$\Phi(t)\le (\widehat{\phi^{\mathcal F}})_t.$$

For the opposite inequality, let $\lambda\in \mathbb{R}$ be arbitrary. Trivially $$\Phi_k(t)= \frac{1}{k}\ln (\sum_i e^{t\lambda_i}|s_i|^2)\geq \frac{1}{k}\ln (\sum_{\lambda_i\geq k\lambda} e^{tk\lambda}|s_i|^2)=\phi_{k,\lambda}+t\lambda.$$ Hence  $$\Phi(t)\geq \phi_{\lambda}^{\mathcal{F}}+t\lambda$$ for any $\lambda,$ and thus $$\Phi(t)\ge (\widehat{\phi^{\mathcal F}})_t.$$ 
  
\end{proof}
 
 \begin{remark}
Phong-Sturm prove in \cite{Sturm} that the geodesic ray one gets from an algebraic test configuration $\mathcal{F}$ is non-trivial if the norm of $\mathcal{F}$ is non-zero.  From the above we see that the weak geodesic ray is trivial if and only if the associated analytic test configuration is trivial, i.e.\ if there exists a number $\lambda_c$ such that $\phi_{\lambda}=\phi$ when $\lambda<\lambda_c$ and $\phi_{\lambda}\equiv -\infty$ when $\lambda>\lambda_c.$
 \end{remark}

\noindent {\sc Julius Ross,  University of Cambridge, UK. \\j.ross@dpmms.cam.ac.uk}\vspace{2mm}\\ 
\noindent{\sc David Witt Nystr\"om,  University of Gothenburg, Sweden. \\\quad wittnyst@chalmers.se}

\end{document}